\newcommand{\red}{\color{red}}
\newcommand{\KK}{\mathbb{K}}
\def\Ddots{\mathinner{\mkern1mu\raise\p@
\vbox{\kern7\p@\hbox{.}}\mkern2mu
\raise4\p@\hbox{.}\mkern2mu\raise7\p@\hbox{.}\mkern1mu}}
\theoremstyle{plain}%
 \newtheorem{thm}{Theorem}[section]
 \newtheorem{cor}{Corollary}[section]
 \newtheorem{lem}{Lemma}[section]
 \newtheorem{prop}{Proposition}[section]
 \theoremstyle{definition}
 \newtheorem{defn}{Definition}[section]
 \newtheorem{exa}{Example}[section]
 \theoremstyle{remark}
 \newtheorem{rem}{Remark}[section]
 \numberwithin{equation}{section}
\newenvironment{dem}[1][Proof of Claim 1]{\noindent\textit{#1.} }{\hfill $\square$}
\newenvironment{dem2}[1][Proof of Claim 2]{\noindent\textit{#1.} }
\newfont{\hueca}{msbm10}
\begin{document}

\title{Odd-quadratic   Lie superalgebras  with  a weak filiform  module as an odd part}

\thanks{The first author was supported by the Centre for Mathematics of the University of Coimbra - UIDB/00324/2020, funded by the Portuguese Government through FCT/MCTES. Third author was supported  by Junta de Extremadura and Fondo Europeo de Desarrollo Regional (GR21005 and IB18032) and by Agencia Estatal de Investigaci\'on (Spain), grant PID2020-115155GB-I00 (European FEDER support included, UE). Fourth author was supported by the PCI of the UCA `Teor\'\i a de Lie y Teor\'\i a de Espacios de Banach' and by the PAI with project number FQM298.}

\author[Elisabete~Barreiro]{Elisabete~Barreiro}
\address{Elisabete~Barreiro,  University of Coimbra, CMUC, Department of Mathematics, Apartado 3008,
EC Santa Cruz,
3001-501 Coimbra,
Portugal\\
{\em E-mail}: {\tt mefb@mat.uc.pt}}{}

\author[Sa\"{i}d Benayadi]{Sa\"{i}d Benayadi}
\address{Sa\"{i}d Benayadi,
Laboratoire de Math\'{e}matiques IECL UMR CNRS 7502,
Universit\'{e} de Lorraine, 3 rue Augustin Fresnel, BP 45112,
F-57073 Metz Cedex 03, France\\
{\em E-mail}: {\tt said.benayadi@univ-lorraine.fr}}{}

\author[Rosa M. Navarro]{Rosa M. Navarro}
\address{Rosa M. Navarro,
Departamento de Matem{\'a}ticas, Universidad de Extremadura, C{\'a}ceres, Espa\~na\\ {\em E-mail}: {\tt rnavarro@unex.es}}{}

\author[Jos\'{e} M. S\'{a}nchez]{Jos\'{e} M. S\'{a}nchez}
\address{Jos\'{e} M. S\'{a}nchez, Departamento de Matem\'aticas, Universidad de C\'adiz, Campus de Puerto Real, 11510, Puerto Real, C\'adiz, Espa\~na\\ {\em E-mail}: {\tt txema.sanchez@uca.es}}{}

\begin{abstract}
The aim of this work is to study a very special family of odd-quadratic Lie superalgebras ${\mathfrak g}={\mathfrak g}_{\bar 0}\oplus {\mathfrak g}_{\bar 1}$ such that ${\mathfrak g}_{\bar 1}$ is a weak filiform ${\mathfrak g}_{\bar 0}$-module (weak filiform type). We introduce this concept after having proved that the unique non-zero odd-quadratic Lie superalgebra $({\mathfrak g},B)$ with ${\mathfrak g}_{\bar 1}$  a filiform ${\mathfrak g}_{\bar 0}$-module  is the abelian $2$-dimensional Lie superalgebra ${\mathfrak g}={\mathfrak g}_{\bar 0} \oplus {\mathfrak g}_{\bar 1}$ such that $\mbox{{\rm dim }}{\mathfrak g}_{\bar 0}=\mbox{{\rm dim }}{\mathfrak g}_{\bar 1}=1$. Let us note that in this context the role of the center of ${\mathfrak g}$ is crucial. Thus, we obtain an inductive description of odd-quadratic Lie superalgebras of weak filiform type via generalized odd double extensions. Moreover, we obtain the classification, up to isomorphism, for the smallest possible dimensions, that is, six and eight.

{\it Keywords}: Lie superalgebras, odd-invariant scalar product, double extensions, weak filiform, nilpotent.

{\it 2020 MSC}: 17A70; 17B05; 17B30.
\end{abstract}

\maketitle

\section{Introduction and preliminaries}

Since arised  Lie algebras (see for instance \cite{Kacc}) they have been studied deeply. Concretely, quadratic Lie algebras have been studied in connection with many problems derived from geometry, physics and other disciplines.

Quadratic Lie algebras have been studied   deeply since they arised in connection with many problems derived from geometry, physics and other disciplines. The structure of quadratic Lie algebras plays an important role in conformal field theory and Sugawara construction exists precisely for quadratic Lie algebras \cite{11,9b}. In \cite{10,15} was introduced the idea of double extension and it allowed them to give a certain description of quadratic Lie algebras. More precisely, it is proved that every quadratic Lie algebra may be constructed as a direct sum of irreducible ones, and the latter by a sequence of double extensions. 

Let us recall that the term ``filiform" was coined by Vergne in \cite{12*} and it refers to a class of nilpotent Lie algebras, those with the longest descending central sequence. Since then, filiform Lie algebras have been largely studied, see \cite{ABG, Fia, GJM, Goze, Mill} and references therein.  Nevertheless, the concept of filiform module is more recent and was firstly introduced in  \cite{JGP} for defining filiform Lie superalgebras.

In \cite{quadraticFLSA} we studied the class of quadratic Lie superalgebras ${\mathfrak g}={\mathfrak g}_{\bar 0}\oplus {\mathfrak g}_{\bar 1}$ such that  ${\mathfrak g}_{\bar 1}$ is a filiform ${\mathfrak g}_{\bar 0}$-module (to short we  called filiform type). We showed that the study of quadratic Lie superalgebras of filiform type can be reduced to those that are solvable. Moreover, we obtained an inductive description of solvable quadratic Lie superalgebras of filiform type via both double extensions and odd double extensions of quadratic ones.

A Lie superalgebra ${\mathfrak g}$ is called {\it odd-quadratic}, on the other hand, if there is a bilinear form $B$ on ${\mathfrak g}$ such that $B$ is non-degenerate,
supersymmetric, odd and ${\mathfrak g}$-invariant. Motivated by mathematical and physical applications \cite{SV2003},
in the paper \cite{OddQuadraticLieSuperalgebras} was introduced the class of odd-quadratic Lie superalgebras studying some properties and looking at non elementary examples. In that work was given descriptions of odd-quadratic Lie superalgebras such that the even part is a reductive Lie
algebra.

It rises natural the question to  study odd-quadratic Lie superalgebras $({\mathfrak g}={\mathfrak g}_{\bar 0}\oplus {\mathfrak g}_{\bar 1},B)$   such that    ${\mathfrak g}_{\bar 1}$ has the structure of filiform ${\mathfrak g}_{\bar 0}$-module, instead of quadratic Lie superalgebras. 
We show that  the unique non-zero odd-quadratic Lie superalgebra $({\mathfrak g},B)$ verifying that ${\mathfrak g}_{\bar 1}$ has the structure of  filiform ${\mathfrak g}_{\bar 0}$-module is the $2$-dimensional Lie superalgebra ${\mathfrak g}={\mathfrak g}_{\bar 0} \oplus {\mathfrak g}_{\bar 1}$ such that $\mbox{{\rm dim }}{\mathfrak g}_{\bar 0}=\mbox{{\rm dim }}{\mathfrak g}_{\bar 1}=1$ with zero product, i.e. abelian (see Corollary \ref{Coro21}).  
We include the proof in the paper  for completeness. In this case, $({\mathfrak g}_{\bar 1})^{*}$ is a filiform ${\mathfrak g}_{\bar 0}$-module. Since ${\mathfrak g}_{\bar 0}$ is isomorphic to $({\mathfrak g}_{\bar 1})^{*}$  with isomorphism $\widetilde{B}: {\mathfrak g}_{\bar 0} \to ({\mathfrak g}_{\bar 1})^*$ defined as $\widetilde{B}(X):=B(X,\cdot),$ for $X \in {\mathfrak g}_{\bar 0}$, we conclude that ${\mathfrak g}_{\bar 0}$ is also a filiform ${\mathfrak g}_{\bar 0}$-module. Consequently, $\mbox{\rm dim } [{\mathfrak g}_{\bar 0},{\mathfrak g}_{\bar 0}] = \mbox{\rm dim } {\mathfrak g}_{\bar 0}-1$. As well, $({\mathfrak g}={\mathfrak g}_{\bar 0}\oplus {\mathfrak g}_{\bar 1},B)$ is an odd-quadratic Lie superalgebra such that  ${\mathfrak g}_{\bar 1}$ has the structure of filiform ${\mathfrak g}_{\bar 0}$-module, and the non-zero Lie algebra  $ {\mathfrak g}_{\bar 0}$ is nilpotent  verifying then  $\mbox{\rm dim } {\mathfrak g}_{\bar 0} - \mbox{\rm dim } [{\mathfrak g}_{\bar 0},{\mathfrak g}_{\bar 0}] \geq 2$, which is a contradiction. In fact, these two structures together have proven to be very restrictive.

Therefore, in this paper we present a mild version of this concept, we study odd-quadratic Lie superalgebras $({\mathfrak g} = {\mathfrak g}_{\bar 0}\oplus {\mathfrak g}_{\bar 1},B)$ such that  ${\mathfrak g}_{\bar 1}$ is a weak filiform ${\mathfrak g}_{\bar 0}$-module. In this case, we can construct a structure theory and easily present examples. Concretely, in Section 2 it is shown the non-existence of odd-quadratic Lie superalgebras $({\mathfrak g} = {\mathfrak g}_{\bar 0}\oplus {\mathfrak g}_{\bar 1},B)$ such that ${\mathfrak g}_{\bar 1}$ is a filiform ${\mathfrak g}_{\bar 0}$-module. In the next section we introduce the concept of weak filiform module (as an odd part via odd double extensions of odd-quadratic ones) and study its structure. In this part the center of ${\mathfrak g}$ plays a relevant role in its own description. Given an odd-quadratic Lie superalgebra $\displaystyle ({\mathfrak g}={\mathfrak g}_{\bar 0} \oplus {\mathfrak g}_{\bar 1},B)$ with $\mbox{\rm dim }{\mathfrak g}_{\bar 1} = m > 0$, such that ${\mathfrak g}_{\bar 1}$ is a weak filiform ${\mathfrak g}_{\bar 0}$-module, we construct an odd-quadratic Lie superalgebra $({\mathfrak t}={\mathfrak t}_{\bar 0}\oplus {\mathfrak t}_{\bar 1}, \widetilde{B})$ as the generalized odd double extension of $(\mathfrak g, B)$, by the $1$-dimensional Lie superalgebra $(\mathbb{K}e)_{\bar 1}$. In Section 4, we present  an inducctive description of odd-quadratic solvable Lie superalgebras of weak filiform type. Also we classify all the complex odd-quadratic Lie superalgebras of weak filiform type ${\mathfrak g}= {\mathfrak g}_{\bar 0} \oplus {\mathfrak g}_{\bar 1}$ for the case $\mbox{{\rm dim} }{\mathfrak g}_{\bar 0} = \mbox{{\rm dim} }{\mathfrak g}_{\bar 1} \in \{3,4\}$.

Next part of the current section is devoted to a review of the needed concepts in the sequel. For an integer $i$ we denote by $\bar{i}$ its correspondent equivalence class in $\mathbb{Z}_2 = \{\bar{0} , \bar{1}  \}$. For two integers $i, j$  we use the well-defined notation $(-1)^{\overline{ij}}$ for $(-1)^{ij} \in \{-1, 1\}$.
For a $\mathbb{Z}_2$-graded vector space $V = V_{\bar 0} \oplus V_{\bar 1}$ over the field $\mathbb{K}$, as usual, we write  $V_{\bar 0}$ for its {\em even part} and $V_{\bar 1}$ for {\em odd part}. A non-zero element $X$ of $V$ is called {\it homogeneous} if  either  $X \in V_{\bar 0}$ or $X \in V_{\bar 1}$. In this work, all elements will be supposed to be homogeneous unless otherwise indicated.
A linear map $\phi : V \to W$ between two $\mathbb{Z}_2$-graded vector spaces is called {\it even} if  $\phi(V_{\bar 0}) \subset W_{\bar 0}$ and $\phi(V_{\bar 1}) \subset W_{\bar 1}$. It is called {\it odd} if  $\phi(V_{\bar 0}) \subset W_{\bar 1}$ and $\phi(V_{\bar 1}) \subset W_{\bar 0}$. Clearly, $Hom(V,W) = Hom(V,W)_{\bar 0} \oplus Hom(V,W)_{\bar 1}$, where the first summand comprises all the even linear maps, and the second  all the odd. Tensor products $V \otimes W$ are $\mathbb{Z}_2$-graded vector spaces, where its even part is   $(V \otimes W)_{\bar 0} := (V_{\bar 0} \otimes W_{\bar 0}) \oplus (V_{\bar 1} \otimes W_{\bar 1})$ and odd part $(V \otimes W)_{\bar 1} := (V_{\bar 0} \otimes W_{\bar 1}) \oplus (V_{\bar 1} \otimes W_{\bar 0})$.

\begin{defn}
A {\it Lie superalgebra} is a $\mathbb{Z}_2$-graded vector space ${\mathfrak g} = {\mathfrak g}_{\bar 0} \oplus {\mathfrak g}_{\bar 1}$, with an even bilinear operation $[\cdot,\cdot]$, which satisfies the conditions:
\begin{itemize}
\item [i.] $[X,Y] = -(-1)^{\overline{ij}}[Y,X]$
\item [ii.] $(-1)
^{\overline{ik}} [X,[Y,Z]] + (-1)^{\overline{ij}} [Y,[Z,X]] + (-1)^{\overline{jk}} [Z, [X,Y]] = 0$ ({\it super Jacobi id.})
\end{itemize}
for any $X \in {\mathfrak g}_{\bar i}, Y \in {\mathfrak g}_{\bar j}, Z \in {\mathfrak g}_{\bar k}$, with $\bar{i},\bar{j},\bar{k} \in \mathbb{Z}_2$.
\end{defn}

All Lie superalgebras will be assumed finite-dimensional over an algebraically closed commutative field $\mathbb{K}$ of characteristic zero, unless otherwise mentioned. The general background on Lie superalgebras can be found in \cite{16}. From the previous definition ${\mathfrak g}_{\bar 0}$ is a Lie algebra, and ${\mathfrak g}_{\bar 1}$ is a ${\mathfrak g}_{\bar 0}$-module. The Lie superalgebra structure also contains the
symmetric pairing $S^2 {\mathfrak g}_{\bar 1} \to {\mathfrak g}_{\bar 0}$, which is a ${\mathfrak g}_{\bar 0}$-morphism and satisfies the super Jacobi identity applied to three
elements of ${\mathfrak g}_{\bar 1}$.


\begin{defn} \label{defB}
Let ${\mathfrak g}$ be a Lie superalgebra and let $B : {\mathfrak g} \times {\mathfrak g} \to \mathbb{K}$ be a bilinear form.
\begin{itemize}
\item[i.] $B$ is {\it supersymmetrical} if $B(X,Y) = (-1)^{\overline{ij}}B(Y,X)$, for any $X \in {\mathfrak g}_{\bar i}, Y \in {\mathfrak g}_{\bar j}$, with $\bar{i},\bar{j}  \in \mathbb{Z}_2$.
\item[ii.] $B$ is {\it skew-supersymmetrical} if $B(X,Y) = -(-1)^{\overline{ij}}B(Y,X)$, for any $X \in {\mathfrak g}_{\bar i}, Y \in {\mathfrak g}_{\bar j}$, with $\bar{i},\bar{j}  \in \mathbb{Z}_2$.
\item[iii.] $B$ is {\it invariant} if $B([X,Y],Z) = B(X,[Y,Z])$, for all $X,Y,Z \in {\mathfrak g}$.
\item[iv.] $B$ is  {\it even} if $B(X,Y) = 0$, for any $X \in {\mathfrak g}_{\bar 0}, Y \in {\mathfrak g}_{\bar 1}$.
\item[v.] $B$ is  {\it odd} if $B(X,Y) = B(Y,X) = 0$, for any $X,\ Y$ such that either $X , \ Y\in {\mathfrak g}_{\bar 0}$ or $X, \ Y \in {\mathfrak g}_{\bar 1}$.
\item [ vi.]  $B$ is  \textit{non-degenerate} if $\displaystyle X \in {\mathfrak g}$ satisfies $\displaystyle B(X,Y)=0$ for all $\displaystyle Y \in {\mathfrak g}$, then $\displaystyle X=0$. Otherwise, $B$ is called {\it degenerate}.
\end{itemize}
\end{defn}


\begin{defn}
A Lie superalgebra ${\mathfrak g}$ is {\it odd-quadratic} if there exists a bilinear form $B : {\mathfrak g} \times {\mathfrak g} \to {\mathfrak g}$ such that $B$ is supersymmetrical, invariant, odd and non-degenerate. It is denoted by $({\mathfrak g},B)$ and  $B$ is an {\it odd-invariant scalar product} on ${\mathfrak g}$. 
\end{defn}

\begin{defn}
Let $({\mathfrak g},B)$ be an odd-quadratic Lie superalgebra.
\begin{itemize}
\item[i.] A graded ideal $I$ of ${\mathfrak g}$ is {\it non-degenerate} (resp. {\it degenerate}) if the restriction of $B$ to $I \times I$ is a non-degenerate (resp. degenerate) bilinear form.
\item[ii.] $({\mathfrak g},B)$ is {\it B-irreducible} if ${\mathfrak g}$ does not have non-zero non-degenerate graded ideals.
\item[iii.] A graded ideal $I$ of ${\mathfrak g}$ is called {\it B-irreducible} if $I$ is non-degenerate and $I$ contains no non-zero non-degenerate graded ideals of ${\mathfrak g}$.
\end{itemize}
\end{defn}

\begin{defn} Let ${\mathfrak g}$ be a Lie superalgebra and let $I \subset {\mathfrak g}$ be a graded ideal.
\begin{itemize}
\item[i.] $I$ is {\it minimal} if $I \notin \{\{0\}, {\mathfrak g}\}$ and if $J$ is a graded ideal of ${\mathfrak g}$ such that $J \subset I$ then $J \in \{\{0\}, I\}$.
\item[ii.] $I$ is called {\it maximal} if $I \notin \{\{0\}, {\mathfrak g}\}$ and if $J$ is a graded ideal of ${\mathfrak g}$ such that $I \subset J$ then $J \in \{I, {\mathfrak g}\}$. 
\end{itemize} 
\end{defn}

\begin{defn} Let $({\mathfrak g},B)$ be an odd-quadratic Lie superalgebra. and let $I \subset {\mathfrak g}$ be a graded ideal. We call the {\it orthogonal of $I$ with respect to $B$} to the set $$I^{\perp} := \{X\in {\mathfrak g} : B(X,Y)=0, \hspace{0.1cm} \mbox{\rm for all } Y \in I\}.$$
\noindent We also say $I$ is {\it isotropic} if $I \subset I^{\perp}$.
\end{defn}

\begin{prop}\label{Prop 2.1} \cite{OddQuadraticLieSuperalgebras}
The Lie superalgebra ${\mathfrak g}={\mathfrak g}_{\bar 0} \oplus {\mathfrak g}_{\bar 1}$ is odd-quadratic if and only if there exists an isomorphism of ${\mathfrak g}_{\bar 0}$-modules $\varphi: {\mathfrak g}_{\bar 0} \to ({\mathfrak g}_{\bar 1})^{*}$ such that $$\varphi([X,Y])(Z)=\varphi([Y,Z])(X),$$ for all $X,Y,Z \in {\mathfrak g}_{\bar 1}$. In this case, $ \mbox{\rm dim } {\mathfrak g}_{\bar 0} = \mbox{\rm dim } {\mathfrak g}_{\bar 1}$ and dimension of ${\mathfrak g}$ is even.
\end{prop}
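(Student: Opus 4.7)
The key observation is that a supersymmetric odd form $B$ is determined by its restriction to ${\mathfrak g}_{\bar 0}\times{\mathfrak g}_{\bar 1}$ (the other two blocks are zero by oddness), and non-degeneracy of $B$ on ${\mathfrak g}$ is equivalent to non-degeneracy of this restriction as a pairing between ${\mathfrak g}_{\bar 0}$ and ${\mathfrak g}_{\bar 1}$. So the plan is to set up the correspondence $B\leftrightarrow\varphi$ via
\[
  \varphi(X)(Y):=B(X,Y),\qquad X\in{\mathfrak g}_{\bar 0},\; Y\in{\mathfrak g}_{\bar 1},
\]
and translate each property of $B$ into a corresponding algebraic condition on $\varphi$.

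For the forward implication, non-degeneracy of $B$ immediately gives that $\varphi$ is a linear isomorphism ${\mathfrak g}_{\bar 0}\to({\mathfrak g}_{\bar 1})^{*}$, whence $\dim{\mathfrak g}_{\bar 0}=\dim{\mathfrak g}_{\bar 1}$ and $\dim{\mathfrak g}$ is even. To check that $\varphi$ intertwines the adjoint action of ${\mathfrak g}_{\bar 0}$ on itself with the coadjoint action $(A\cdot f)(Z):=-f([A,Z])$ on $({\mathfrak g}_{\bar 1})^{*}$, I would expand $\varphi([A,X])(Z)=B([A,X],Z)$ and apply invariance of $B$ together with skew-symmetry $[A,X]=-[X,A]$ in ${\mathfrak g}_{\bar 0}$. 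The extra identity $\varphi([X,Y])(Z)=\varphi([Y,Z])(X)$ for $X,Y,Z\in{\mathfrak g}_{\bar 1}$ is then just invariance $B([X,Y],Z)=B(X,[Y,Z])$ followed by supersymmetry $B(X,[Y,Z])=B([Y,Z],X)$; the sign $(-1)^{\overline{ij}}$ equals $+1$ because one of the two arguments is even and the other odd.

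For the converse, given such a $\varphi$ I would \emph{define}
\[
  B(X,Y):=\varphi(X)(Y)=:B(Y,X)\qquad\text{for }X\in{\mathfrak g}_{\bar 0},\,Y\in{\mathfrak g}_{\bar 1},
\]
and set $B$ to vanish on ${\mathfrak g}_{\bar 0}\times{\mathfrak g}_{\bar 0}$ and ${\mathfrak g}_{\bar 1}\times{\mathfrak g}_{\bar 1}$. Oddness, supersymmetry and non-degeneracy are then immediate from the construction and from $\varphi$ being an isomorphism. Invariance $B([U,V],W)=B(U,[V,W])$ must be checked on homogeneous triples: by oddness, both sides vanish unless the total parity of $(U,V,W)$ is $\bar{1}$, leaving only the ``three odd'' pattern, which is precisely the extra hypothesis on $\varphi$, and the ``two even, one odd'' patterns, which reduce to the ${\mathfrak g}_{\bar 0}$-equivariance of $\varphi$ after using the coadjoint formula and skew-symmetry of the bracket on ${\mathfrak g}_{\bar 0}$.

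The main friction I expect is the systematic case analysis for invariance in the converse direction: the three permutations of ``two even, one odd'' each have to be related, via supersymmetry and the skew-symmetry of the bracket, back to the single module-morphism identity $\varphi([A,X])=A\cdot\varphi(X)$, and care is needed to keep the signs straight when transposing arguments of $B$. Once this bookkeeping is carried out cleanly, the proposition follows from routine manipulations with the coadjoint action and the parity conventions.
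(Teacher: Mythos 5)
Your argument is correct and is the standard one: the paper itself states this proposition without proof, citing [Albuquerque--Barreiro--Benayadi], and the proof there is exactly the dictionary you describe, namely $\varphi(X)(Y)=B(X,Y)$ with oddness forcing $B$ to live on ${\mathfrak g}_{\bar 0}\times{\mathfrak g}_{\bar 1}$, non-degeneracy giving the isomorphism and the dimension count, invariance plus skew-symmetry of the even bracket giving ${\mathfrak g}_{\bar 0}$-equivariance, and the triple-odd instance of invariance giving the displayed identity. The case analysis you flag for the converse does close up as you expect, so there is nothing to add.
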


\begin{defn}
Given a Lie superalgebra ${\mathfrak g}$, a $\mathbb{Z}_2$-graded vector space $A = A_{\bar 0} \oplus A_{\bar 1}$ is a $\displaystyle {\mathfrak g}$-\textit{module} if $A$ is equipped with an even bilinear map ${\mathfrak g} \times A \to A$ (denoted by $\displaystyle
(X,a)\mapsto Xa$, for $\displaystyle X \in {\mathfrak g}$ and
$\displaystyle a\in A$)  satisfying $$[X,Y]a = X(Ya) - (-1)^{\overline{ij}}Y(Xa),$$ for any $a \in A, X \in {\mathfrak g}_{\bar i}, Y \in {\mathfrak g}_{\bar j} $, with $\bar{i},\bar{j}  \in \mathbb{Z}_2$.
\end{defn}

 Occasionally, we have to change the gradation of Lie superalgebras
as we will describe. Let $\displaystyle {\mathfrak h}={\mathfrak h}_{\bar
0}\oplus{\mathfrak h}_{\bar 1}$ be a Lie superalgebra. Denote by
$\displaystyle P({\mathfrak h})=V_{\bar 0}\oplus V_{\bar 1}$ the $\displaystyle \mathbb{Z}_2$-graded vector space obtained from
$\displaystyle {\mathfrak h}$ with gradation defined by
\begin{eqnarray*}
\displaystyle V_{\bar 0}={\mathfrak h}_{\bar 1} & \mbox{ and }&
V_{\bar 1}={\mathfrak h}_{\bar 0}.
\end{eqnarray*}
Clearly, the associative superalgebras $\displaystyle
\mbox{Hom}({\mathfrak h})$ and $\displaystyle \mbox{Hom}(P({\mathfrak
h}))$ coincide. Therefore the representations of a Lie
superalgebra $\displaystyle {\mathfrak g}$ in $\displaystyle {\mathfrak
h}$ coincide with representations of $\displaystyle {\mathfrak g}$ in $\displaystyle P({\mathfrak h})$. Note that the dual spaces
$P({\mathfrak h}^*), {\mathfrak h}^*$ are equal as $\displaystyle \mathbb{Z}_2$-graded vector
spaces, however
\begin{eqnarray*}
\displaystyle V_{\bar 0}^*={\mathfrak h}_{\bar 1}^* & \mbox{ and }& V_{\bar 1}^*={\mathfrak h}_{\bar 0}^*.
\end{eqnarray*}
Denote by $\displaystyle \pi_{\mathfrak h} : {\mathfrak h} \to \mbox{Hom}( P({\mathfrak h}^*))$ the linear map defined for homogeneous elements as follows $$\pi_{\mathfrak h} (Z)(f)(Y) := -(-1)^{{\bar k}{\overline \delta}}f([Z,Y])$$ for all $f \in (P({\mathfrak h}^*))_{{\overline \delta}}$, $Z \in {\mathfrak h}_{\bar k}, Y \in {\mathfrak h}$.
It is quite easy to show that $\displaystyle \pi_{\mathfrak h}$ is a representation of $\displaystyle {\mathfrak h}$ in $\displaystyle P({\mathfrak h}^*)$, but it is not the co-adjoint representation of
$\displaystyle {\mathfrak h}$.

\begin{prop}\label{centralext}\cite{OddQuadraticLieSuperalgebras}
Let $({\mathfrak g},B)$ be an odd-quadratic Lie superalgebra and ${\mathfrak h}$ a Lie superalgebra. Consider $\psi : {\mathfrak h} \to Der_a ({\mathfrak g},B)$ a morphism of Lie superalgebras. Define the linear map $\varphi : {\mathfrak g} \times {\mathfrak g} \to P({\mathfrak h}^*)$ for homogeneous elements by  $$\varphi(X,Y)(Z) := (-1)^{({\bar i}+{\bar j}) {\bar k}} B \bigl(\psi(Z)(X),Y\bigr)$$ for all $X \in {\mathfrak g}_{\bar i}, Y \in {\mathfrak g}_{\bar j}, Z \in {\mathfrak h}_{\bar k}$. Then the vector space ${\mathfrak g} \oplus P({\mathfrak h}^*)$ becomes a Lie superalgebra considering the product $$[X+f,Y+h] := [X,Y]_{\mathfrak g} + \varphi(X,Y),$$
for $X+f, Y+h \in {\mathfrak g} \oplus P({\mathfrak h}^*)$. It is called the {\it central extension of $ P({\mathfrak h}^*)$ by ${\mathfrak g}$} (by means of $\varphi$).
\end{prop}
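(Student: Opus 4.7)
Since the formula $[X+f, Y+h] := [X,Y]_{\mathfrak g} + \varphi(X,Y)$ ignores both $f$ and $h$, the summand $P({\mathfrak h}^*)$ lies in the center of the proposed structure, and the claim reduces to showing that $\varphi$ defines an even $2$-cocycle on ${\mathfrak g}$ with values in the trivial ${\mathfrak g}$-module $P({\mathfrak h}^*)$. The plan is to verify three points in order: (i) the correct grading of $\varphi$, (ii) its super skew-symmetry, and (iii) the super $2$-cocycle identity arising from Jacobi on triples from ${\mathfrak g}$. Jacobi on any triple involving an element of $P({\mathfrak h}^*)$ is trivial, and the ${\mathfrak g}$-component of Jacobi on a triple from ${\mathfrak g}$ is just Jacobi in ${\mathfrak g}$.

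\textbf{Grading and skew-symmetry.} Fix $X \in {\mathfrak g}_{\bar i}$, $Y \in {\mathfrak g}_{\bar j}$ and $Z \in {\mathfrak h}_{\bar \ell}$. Since $\psi$ is a morphism of Lie superalgebras, $\psi(Z)$ has degree $\bar \ell$ as a derivation, so $\psi(Z)(X) \in {\mathfrak g}_{\overline{i+\ell}}$. The oddness of $B$ then forces $B(\psi(Z)(X), Y) = 0$ unless $\bar \ell = \overline{i+j+1}$, so $\varphi(X,Y)$ vanishes on ${\mathfrak h}_{\overline{i+j}}$ and lies in ${\mathfrak h}_{\overline{i+j+1}}^* = (P({\mathfrak h}^*))_{\overline{i+j}}$, which is the parity expected of an even cocycle. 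For super skew-symmetry, the $B$-antisymmetry of $\psi(Z)$ combined with the supersymmetry of $B$ rewrites $B(\psi(Z)(Y), X) = -(-1)^{\overline{ij}} B(\psi(Z)(X), Y)$, and this directly gives $\varphi(Y,X) = -(-1)^{\overline{ij}} \varphi(X,Y)$.

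\textbf{Cocycle identity.} For $X, Y, W \in {\mathfrak g}$ of degrees $\bar i, \bar j, \bar k$, the $P({\mathfrak h}^*)$-component of Jacobi, evaluated on $Z \in {\mathfrak h}_{\bar \ell}$ and with the common prefactor $(-1)^{(\bar i + \bar j + \bar k)\bar \ell}$ extracted, reduces to
$$
(-1)^{\overline{ik}} B\bigl(\psi(Z)(X), [Y,W]_{\mathfrak g}\bigr) + (-1)^{\overline{ij}} B\bigl(\psi(Z)(Y), [W,X]_{\mathfrak g}\bigr) + (-1)^{\overline{jk}} B\bigl(\psi(Z)(W), [X,Y]_{\mathfrak g}\bigr) = 0.
$$
I would attack this by transforming only the first summand: move $\psi(Z)$ off the first slot via $B$-antisymmetry, expand $\psi(Z)([Y,W]_{\mathfrak g})$ by the derivation rule into two pieces, then combine invariance of $B$, supersymmetry of $B$, and super skew-symmetry of $[\cdot,\cdot]_{\mathfrak g}$ to rewrite those two pieces as scalar multiples of $B(\psi(Z)(Y), [W,X]_{\mathfrak g})$ and $B(\psi(Z)(W), [X,Y]_{\mathfrak g})$, with signs exactly cancelling the prefactors $(-1)^{\overline{ij}}$ and $(-1)^{\overline{jk}}$ of the remaining two terms.

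\textbf{Main obstacle.} No step is conceptually delicate; all the difficulty lies in the parity bookkeeping. The prefactor $(-1)^{(\bar i + \bar j)\bar k}$ in the definition of $\varphi$ is chosen precisely so that the signs produced by $B$-antisymmetry, invariance and supersymmetry of $B$, and graded commutativity of $[\cdot,\cdot]_{\mathfrak g}$ cancel against the cyclic signs from super Jacobi. The cleanest way to avoid errors is to reduce every occurrence of $B$ in the cocycle identity to the normal form $B(\psi(Z)(A), [B,C])$ and compare the resulting exponents modulo $2$; once this is done, the three terms sum to zero and the Lie superalgebra structure on ${\mathfrak g} \oplus P({\mathfrak h}^*)$ is established.
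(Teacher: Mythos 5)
This proposition is stated in the paper without proof (it is imported from \cite{OddQuadraticLieSuperalgebras}), so there is no internal argument to compare against; I can only assess your proposal on its own terms, and it is correct. Reducing the claim to showing that $\varphi$ is an even, super skew-symmetric $2$-cocycle with values in the trivial module $P({\mathfrak h}^*)$ is exactly the right move, since the bracket kills $P({\mathfrak h}^*)$ outright. Your parity check is right: oddness of $B$ forces $\varphi(X,Y)$ to be supported on ${\mathfrak h}_{\overline{i+j+1}}$, which by the definition of the parity reversal $P$ sits in $(P({\mathfrak h}^*))_{\overline{i+j}}$, as needed for an even bracket; and the skew-symmetry follows from combining the $B$-antisymmetry of $\psi(Z)$ with the supersymmetry of $B$ exactly as you say. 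The cocycle computation you outline does close up: writing $B(\psi(Z)(X),[Y,W]) = -(-1)^{\overline{\ell i}}B(X,\psi(Z)([Y,W]))$, expanding by the derivation rule, and pushing the brackets across $B$ by invariance and supersymmetry yields $-(-1)^{\overline{ij}+\overline{ik}}B(\psi(Z)(Y),[W,X]) - (-1)^{\overline{ik}+\overline{jk}}B(\psi(Z)(W),[X,Y])$, which after multiplication by the prefactor $(-1)^{\overline{ik}}$ cancels the other two terms; the common factor $(-1)^{(\bar i+\bar j+\bar k)\bar \ell}$ is indeed shared by all three summands. The only thing a final write-up must add is the explicit sign bookkeeping in that last step, since you currently assert rather than exhibit the cancellation, and that cancellation is the entire content of the proposition.
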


\begin{thm}
\cite{OddQuadraticLieSuperalgebras}
Let $({\mathfrak g},B)$ be an odd-quadratic Lie superalgebra and ${\mathfrak h}$ a Lie superalgebra. Consider $\psi : {\mathfrak h} \to Der_a({\mathfrak g},B)$ a morphism of Lie superalgebras. Define the linear map $ \widetilde{\psi}: {\mathfrak h} \to \mbox{\rm Hom}({\mathfrak g} \oplus P({\mathfrak h}^*))$ by $$\widetilde{\psi} (Z)(X+f) := \psi (Z)(X)+ \pi _{\mathfrak h}(Z)(f)$$
for $X+f \in {\mathfrak g} \oplus P({\mathfrak h}^*), Z \in {\mathfrak h}$.

\noindent Then $\widetilde{\psi}(Z) \in Der ({\mathfrak g} \oplus P({\mathfrak h}^*))$, for $Z \in {\mathfrak h}_{\bar k}$ where ${\mathfrak g} \oplus P({\mathfrak h}^*)$ is the central extension of $P({\mathfrak h}^*)$ by ${\mathfrak g}$ (by means of $\varphi$). Moreover, ${\mathfrak t} = {\mathfrak h}\oplus {\mathfrak g} \oplus P({\mathfrak h}^*)$ with the product
\begin{align*}
[Z+X+f,W+Y+h] := & [Z,W]_{\mathfrak h}+[X,Y]_{\mathfrak g}+\psi(Z)(Y)- (-1)^{{\bar i}{\bar j}}\psi(W)(X)\\ & + \pi_{\mathfrak h}(Z)(\mathfrak h) -(-1)^{{\bar i} {\bar j}}\pi_{\mathfrak h}(W)(f)+\varphi(X,Y),
\end{align*}
with $Z+X+f \in {\mathfrak t}_{{\bar i}}, W+Y+h \in {\mathfrak t}_{\bar j}$, where $\varphi$ is defined in Proposition \ref{centralext}, is a Lie superalgebra. More precisely, ${\mathfrak t}$ is the semi-direct product of ${\mathfrak g} \oplus P({\mathfrak h}^*)$ by ${\mathfrak h}$ by means of $\widetilde{\psi}$. Furthermore, let $\gamma$ be an odd supersymmetric invariant bilinear form on ${\mathfrak h}$ (not necessarily non-degenerate). Then the bilinear form $\widetilde{B}: {\mathfrak t} \times {\mathfrak t} \to \mathbb{K}$ defined by $$\widetilde{B} (Z+X+f,W+Y+h) := B(X,Y) + \gamma(Z,W) + f(W) + (-1)^{{\bar i}{\bar j}}h(Z)$$ whenever $Z+X+f \in {\mathfrak t}_{\bar i}, W+Y+h \in {\mathfrak t}_{\bar j}$, is an odd-invariant scalar product on ${\mathfrak t}$ and $({\mathfrak t}, \widetilde{B})$ is an odd-quadratic Lie superalgebra. We say that the odd-quadratic Lie superalgebra $({\mathfrak t},\widetilde{B})$ is an {\it odd double extension of $({\mathfrak g},B)$ by} ${\mathfrak h}$ (by means of $\psi$ and $\gamma$).
\end{thm}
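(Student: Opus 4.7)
The plan is to verify the three claims in sequence. First, that $\widetilde\psi(Z)$ is a derivation of the central extension ${\mathfrak g}\oplus P({\mathfrak h}^*)$ built in Proposition \ref{centralext}. Expanding $\widetilde\psi(Z)([X+f,Y+h])$ against $[\widetilde\psi(Z)(X+f),Y+h]\pm[X+f,\widetilde\psi(Z)(Y+h)]$ and using that $P({\mathfrak h}^*)$ is central in the extension, the ${\mathfrak g}$-component of the identity reduces to $\psi(Z)$ being a derivation of ${\mathfrak g}$, while the $P({\mathfrak h}^*)$-component reduces, after evaluation on a generic $W\in{\mathfrak h}$ and unwinding the definitions of $\pi_{\mathfrak h}$ and $\varphi$, to precisely the $B$-antisymmetry condition that defines $Der_a({\mathfrak g},B)$.

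Next, since by construction ${\mathfrak t}$ is the semi-direct product of ${\mathfrak g}\oplus P({\mathfrak h}^*)$ by ${\mathfrak h}$ through $\widetilde\psi$, the Lie superalgebra axioms for ${\mathfrak t}$ amount to verifying that $\widetilde\psi:{\mathfrak h}\to Der({\mathfrak g}\oplus P({\mathfrak h}^*))$ is a Lie superalgebra morphism. Super-antisymmetry of the bracket on ${\mathfrak t}$ is immediate from that of each ingredient. For the super Jacobi identity, split a triple $(U_1,U_2,U_3)$ by which summand ${\mathfrak h}$, ${\mathfrak g}$ or $P({\mathfrak h}^*)$ each $U_i$ lies in: the pure ${\mathfrak g}\oplus P({\mathfrak h}^*)$ case is Proposition \ref{centralext}; the pure ${\mathfrak h}$ case is the Jacobi identity in ${\mathfrak h}$; the cases with exactly one ${\mathfrak h}$-component use the derivation property of $\widetilde\psi(Z)$ from the first step; and the cases with two ${\mathfrak h}$-components reduce to $\widetilde\psi$ being a representation, which follows from $\psi$ being a Lie superalgebra morphism, $\pi_{\mathfrak h}$ being a representation of ${\mathfrak h}$, and the centrality of $P({\mathfrak h}^*)$.

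It remains to show that $\widetilde B$ is an odd-invariant scalar product. Non-degeneracy follows by extracting summands one at a time: pairing $U=Z+X+f$ against an arbitrary element of ${\mathfrak h}$ forces $f=0$, against ${\mathfrak g}$ forces $X=0$ by non-degeneracy of $B$, and against $P({\mathfrak h}^*)$ forces $Z=0$. Supersymmetry and oddness follow from the corresponding properties of $B$ and $\gamma$ together with the parity swap built into $P({\mathfrak h}^*)$, which makes the duality term $f(W)+(-1)^{\overline{i}\,\overline{j}}h(Z)$ behave correctly under interchange and vanish on same-parity pairs. The main obstacle is invariance $\widetilde B([U,V],W')=\widetilde B(U,[V,W'])$, which demands careful sign bookkeeping across the nine combinations of summands for $(U,V,W')$; the conceptual key is that the formula $\varphi(X,Y)(Z)=(-1)^{(\overline{i}+\overline{j})\overline{k}}B(\psi(Z)(X),Y)$ from Proposition \ref{centralext} is tailored so that the $\varphi$-contributions arising on the left precisely match the duality-pairing contributions $f(W)$ on the right. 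The remaining pure ${\mathfrak h}\times{\mathfrak h}$ invariance is covered by the hypothesis that $\gamma$ is invariant, and the pure ${\mathfrak g}\times{\mathfrak g}$ case by invariance of $B$; once these matchings are in place, all cases collapse.
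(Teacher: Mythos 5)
This theorem is not proved in the paper at all: it is quoted as background from the cited reference \cite{OddQuadraticLieSuperalgebras} (Albuquerque--Barreiro--Benayadi, \emph{J. Geom. Phys.} 2010), so there is no in-paper argument to compare yours against. Your outline is the standard verification for a (semi-direct product of a central extension) double-extension construction and is sound as a strategy; it matches what one would expect the source to do. Two points deserve tightening, though neither is fatal. First, in the non-degeneracy argument the order matters: pairing $U=Z+X+f$ against an arbitrary $W\in{\mathfrak h}$ yields $\gamma(Z,W)+f(W)=0$, which does \emph{not} force $f=0$ on its own because of the $\gamma(Z,W)$ term ($\gamma$ is only assumed invariant, not zero). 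You must first pair against $P({\mathfrak h}^*)$ to get $h(Z)=0$ for all $h$, hence $Z=0$; then against ${\mathfrak g}$ to get $X=0$ from non-degeneracy of $B$; and only then against ${\mathfrak h}$ to conclude $f=0$. Second, in the derivation check the $P({\mathfrak h}^*)$-component does not reduce \emph{only} to the $B$-antisymmetry of $\psi(Z)$: evaluating $\pi_{\mathfrak h}(Z)(\varphi(X,Y))$ on $W$ produces $\varphi(X,Y)([Z,W])$, i.e.\ a term in $\psi([Z,W])$, and matching it against $\varphi(\psi(Z)X,Y)\pm\varphi(X,\psi(Z)Y)$ requires expanding $\psi([Z,W])=[\psi(Z),\psi(W)]$, so the hypothesis that $\psi$ is a morphism of Lie superalgebras is used here as well as the antisymmetry. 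Both are hypotheses of the theorem, so this is an imprecision of attribution rather than a gap, but a complete write-up would have to carry out the sign bookkeeping you defer, since that is where the specific powers of $(-1)$ in the definitions of $\pi_{\mathfrak h}$, $\varphi$ and $\widetilde B$ are actually tested.
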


\section{Non-existence of odd-quadratic Lie superalgebras ${\mathfrak g}={\mathfrak g}_{\bar 0}\oplus {\mathfrak g}_{\bar 1}$ such that ${\mathfrak g}_{\bar 1}$ is a filiform ${\mathfrak g}_{\bar 0}$-module and $\mbox{{\rm dim }} {\mathfrak g} >2$}
Our first attempt was to study odd-quadratic Lie superalgebras ${\mathfrak g}={\mathfrak g}_{\bar 0}\oplus {\mathfrak g}_{\bar 1}$ such that ${\mathfrak g}_{\bar 1}$ is a filiform ${\mathfrak g}_{\bar 0}$-module, but these two structures, odd quadratic and filiform module,  are shown to be  incompatible together if $\mbox{{\rm dim }} {\mathfrak g} >2$. 
We  explain the idea of the proof. We show that  the dual  ${\mathfrak g}^{*}_{\bar 1}$ is also a filiform ${\mathfrak g}_{\bar 0}$-module. As ${\mathfrak g}_0$-modules ${\mathfrak g}_0$ and ${\mathfrak g}_1^*$ are isomorphic, then ${\mathfrak g}_0$ is a filiform ${\mathfrak g}_0$-module. This fact leads, in particular, to $\mbox{{\rm dim }}[{\mathfrak g}_{\bar 0}, {\mathfrak g}_{\bar 0}] = \mbox{{\rm dim }} {\mathfrak g}_{\bar 0}-1$. We   prove also that ${\mathfrak g}_{\bar 0}$ is nilpotent  Lie algebra, so it  has at least two generators and   $\mbox{{\rm dim }}[{\mathfrak g}_{\bar 0}, {\mathfrak g}_{\bar 0}] \leq \mbox{{\rm dim }}{\mathfrak g}_{\bar 0}-2$, arriving then to a contradiction.

We recall the  concept of filiform module that was firstly introduced in \cite{JGP} for defining filiform Lie superalgebras.

\begin{defn}\rm
Let $(\mathfrak{g},[\cdot,\cdot])$ be a Lie algebra and $(V,\pi)$ be a finite dimensional representation of $\mathfrak{g}$ (that is, $V$ is a $\mathfrak{g}$-module). We say $V$ is a {\it filiform $\mathfrak{g}$-module} if there exist $\{V_0,\dots,V_m\}$, being $m = \mbox{\rm dim }V$, such that
\begin{itemize}
\item[1.] $V_i$ is a $\mathfrak{g}$-submodule of $V$, for  $i \in \{0,\dots,m\}$.
\item[2.] $\mbox{\rm dim }V_i = i,$ for $i \in \{0,\dots,m\}$.
\item[3.] $V=V_m \supset V_{p-1} \supset \cdots \supset V_1 \supset V_0 = \{0\}$.
\item[4.] $\mathfrak{g} \cdot V_i = V_{i-1}$, for $i \in \{1,\dots,m\}$.
\end{itemize}
We recall that $X\cdot v=\pi(X)(v)$ for any $X \in \mathfrak{g}, v \in V$.
\end{defn}

\noindent We need the following three lemmas to prove the main theorem of the section.  

\begin{lem}\label{lemmma_2}
Let $(\mathfrak{g},[\cdot,\cdot])$ be a Lie algebra and $(V,\pi)$ a filiform representation (i.e. $V$ is a filiform $\mathfrak{g}$-module). Then the dual representation $(V^*,\pi^*)$ is filiform (i.e. $V^*$ is a filiform $\mathfrak{g}$-module).
\end{lem}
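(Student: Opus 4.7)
The plan is to construct an explicit filiform filtration of $V^*$ by taking annihilators of the filiform filtration of $V$. Given the filiform chain $\{0\} = V_0 \subset V_1 \subset \cdots \subset V_m = V$, I would set
$$W_i := V_{m-i}^{\perp} = \{f \in V^* : f(V_{m-i}) = 0\}, \qquad i = 0,1,\dots,m.$$
The routine verifications are that $\dim W_i = i$, that $\{0\} = W_0 \subset W_1 \subset \cdots \subset W_m = V^*$ is a strictly increasing chain of subspaces of the correct dimensions, and that each $W_i$ is a $\mathfrak{g}$-submodule of $V^*$, the last assertion being the standard fact that the annihilator of a $\mathfrak{g}$-submodule is again a submodule under the dual representation $\pi^*(X)f = -f\circ \pi(X)$.

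Next I would dispatch the easy inclusion $\mathfrak{g}\cdot W_i \subset W_{i-1}$ by direct computation: for $X\in \mathfrak{g}$, $f\in W_i$ and $v\in V_{m-i+1}$ one has $(\pi^*(X)f)(v) = -f(X\cdot v)$, which vanishes because $X\cdot v \in \mathfrak{g}\cdot V_{m-i+1} = V_{m-i}$ and $f$ annihilates $V_{m-i}$.

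The main obstacle is the reverse inclusion $W_{i-1}\subset \mathfrak{g}\cdot W_i$, which does not follow from dimension alone. My strategy is a proof by contradiction using biduality. If $\mathfrak{g}\cdot W_i \subsetneq W_{i-1}$, the pre-annihilator of $\mathfrak{g}\cdot W_i$ in $V$ strictly contains $(W_{i-1})^{\perp} = V_{m-i+1}$, so there exists $v\in V\setminus V_{m-i+1}$ with $(\pi^*(X)f)(v) = 0$ for every $X\in \mathfrak{g}$ and $f\in W_i$. Unwinding the definition of $\pi^*$ this forces $X\cdot v \in (W_i)^{\perp} = V_{m-i}$ for all $X\in \mathfrak{g}$. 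Hence $V_{m-i+1}+\mathbb{K}v$ is a $\mathfrak{g}$-submodule of $V$ of dimension $m-i+2$ on which $\mathfrak{g}$ acts with image contained in $V_{m-i}$, contradicting the filiform equality $\mathfrak{g}\cdot V_{m-i+2} = V_{m-i+1}$ (a dimension count then fails).

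To turn the previous paragraph into a contradiction I need the auxiliary fact that in a filiform $\mathfrak{g}$-module the members $V_j$ of the given chain are the only proper graded submodules, so that $V_{m-i+1}+\mathbb{K}v$ must equal $V_{m-i+2}$. I would prove this separately by considering the ``leading index'' of an element of an arbitrary submodule $W$, namely the smallest $j$ with $W\subset V_j$, and using the filiform relation $\mathfrak{g}\cdot V_j = V_{j-1}$ together with the fact that the induced action on the one-dimensional quotient $V_j/V_{j-1}$ can be made nonzero, so that iterated applications of $\mathfrak{g}$ produce representatives in every lower piece of the filtration and force $W = V_j$. Once this uniqueness lemma is in hand, the contradiction above closes, $\{W_i\}$ is a filiform filtration of $V^*$, and $(V^*,\pi^*)$ is filiform as claimed.
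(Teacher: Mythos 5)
Your proof is correct, and it shares the paper's starting point -- the filtration of $V^*$ by annihilators $W_i=V_{m-i}^{\perp}$ and the easy inclusion $\mathfrak{g}\cdot W_i\subset W_{i-1}$ are exactly what the paper does -- but you handle the crucial reverse inclusion $W_{i-1}\subset\mathfrak{g}\cdot W_i$ by a genuinely different route. The paper argues computationally: it fixes a basis $\{e_1,\dots,e_m\}$ adapted to the flag, uses $\mathfrak{g}\cdot V_q=V_{q-1}$ to produce $X_q$ with $X_q e_q=\beta_{q-1}e_{q-1}+\cdots$, $\beta_{q-1}\neq 0$, and then checks that $X_q\cdot e_{q-1}^*=\lambda_q e_q^*+f_q$ with $\lambda_q\neq0$ and $f_q$ supported on higher dual basis vectors, so that these images form a triangular family spanning $(V^*)_{i-1}$; no auxiliary lemma is needed. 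You instead argue by biduality: a strict inclusion would yield $v\notin V_{m-i+1}$ with $\mathfrak{g}\cdot v\subset V_{m-i}$, hence a submodule $V_{m-i+1}+\mathbb{K}v$ of dimension $m-i+2$ whose image under $\mathfrak{g}$ is too small. As you correctly note, this only bites once you know that the $V_j$ are the \emph{only} submodules of a filiform module, so that $V_{m-i+1}+\mathbb{K}v=V_{m-i+2}$; that uniqueness statement is true, and your sketch (take the minimal $j$ with $W\subset V_j$, use $\mathfrak{g}\cdot V_j=V_{j-1}\supsetneq V_{j-2}=\mathfrak{g}\cdot V_{j-1}$ to find $X$ pushing a generator of $W$ mod $V_{j-1}$ to a nonzero class in $V_{j-1}/V_{j-2}$, and iterate downward to get $\dim W\geq j$) does close the induction. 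The trade-off is clear: your argument is basis-free and isolates a structural fact of independent interest (rigidity of the flag), at the cost of an extra lemma; the paper's is shorter and entirely explicit but buried in index bookkeeping. Do make sure, when writing it up, to treat the boundary case $i=1$ (where $W_0=\{0\}$ and the reverse inclusion is vacuous), since there $m-i+2=m+1$ and the submodule argument does not literally apply.
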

\begin{proof}
Recall that  $X\cdot f = \pi^*(X)f = -f \circ \pi(X)$ for all $X \in \mathfrak{g}, f \in V^*$. Then  $$\bigr((\pi^*)(X)(f)\bigl)(v)=-f(\pi(X)(v)) = (X\cdot f)(v)=-f(X\cdot v)$$ for any $v \in V$.
Let us consider a basis $\mathcal{B}:=\{e_1,\dots,e_m\}$ of $V$ such that $\{e_1,\dots,e_i\}$ is a basis of $V_i$, for $i \in \{1,\dots,m\}$. For any $i \in \{0,\dots,m\}$ we define $$(V^*)_i := \{f \in V^* : f(V_{m-i})=\{0\}\}.$$ It is clear that $(V^*)_i$ is a vector subspace of $V^*$ and $\{e_{m-(i-1)}^*,\dots,e_m^*\}$ is a basis of $(V^*)_i$, for $i \in \{1,\dots,m\}$, where $\{e_1^*,\dots,e_m^*\}$ is a dual basis of $\mathcal{B}$. So $\mbox{\rm dim }(V^*)_i = i$.

For $i \in \{1,\dots,m\}$,
\begin{align*}
f \in (V^*)_i &\iff f(V_{m-i})=\{0\}\\
&\iff f(X\cdot v)=\{0\} \; \mbox{\rm for all } X \in \mathfrak{g}, v \in V_{m-i+1}\\
&\iff (X\cdot f)(V_{m-(i-1)})=\{0\} \; \mbox{\rm for all } X \in \mathfrak{g}\\
&\iff X\cdot f \in (V^*)_{i-1} \; \mbox{\rm for all } X \in \mathfrak{g}
\end{align*}
We conclude that $\mathfrak{g} \cdot (V^*)_i \subset (V^*)_{i-1}$.

If $i=1,$  $(V^*)_1= {\rm span}_{\mathbb K}\{e_m^*\}$, then $\mathfrak{g} \cdot (V^*)_1=(V^*)_0 = \{0\}$. Let $i \in \{2,\dots,m\}$, in the following we are going to prove that $\mathfrak{g} \cdot (V^*)_i=(V^*)_{i-1}$, with $(V^*)_{i-1}={\rm span}_{\mathbb K}\{e_{m-i+2}^*,\dots,e_m^*\}$.
Let $q \in \{m-i+2,\dots,m\} \subset \{2,\dots,m\}$. Then there exists $X_q \in \mathfrak{g}$ such that $$X_qe_q=\beta_{q-1}e_{q-1}+\cdots+\beta_1e_1$$ with $\beta_{q-1} \neq 0$ because $\mathfrak{g}V_q = V_{q-1}$.

Now,
$$X_q((-\beta_{q-1}^{-1})e_{q-1}^*)(e_q) =\beta_{q-1}^{-1}e_{q-2}^*(Xe_q)
=\beta_{q-1}^{-1}\beta_{q-1}
=1.$$
Therefore, $X_q((\beta_{q-1}^{-1})e_{q-1}^*) = e_q^*+X_q^*$ where $X_q^* \in {\rm span}_{\mathbb K}\{e_{q+1}^*,\dots,e_m^*\}$ if and only if $X_q(e_{q-1}^*) =\lambda_qe_q^*+f_q$, where $\lambda_q := \beta_{q-1}, f_q:=\beta_{q-1}X_q^*$.
In conclusion, for $q \in \{m-i+2,\dots,m-1\}$ there exist $X_q \in \mathfrak{g}$, $\lambda_q \in \mathbb{K} \setminus \{0\}$ and $f_q \in {\rm span}_{\mathbb{K}}\{e_{q+1}^*,\dots,e_m^*\}$ such that $$X_qe_{q-2}^* = \lambda_qe_q^*+f_q.$$
For $q=m$ there exist $X_m \in \mathfrak{g}$, $\lambda_m \in \mathbb{K}\setminus \{0\}$ such that $$X_me_{m-1}^*=\lambda_me_m^*.$$
It follows that $$\{e_{m-i+2}^*,\dots,e_m^*\} \subset \mathfrak{g} \cdot (V^*)_i.$$ Then $(V^*)_{i-1} \subset \mathfrak{g} \cdot (V^*)_i$ and we conclude that $\mathfrak{g} \cdot (V^*)_i=(V^*)_{i-1}$ which proves that $V^*$ is a filiform $\mathfrak{g}$-module.
\end{proof}

\begin{lem}\label{lemmma_3}
Let $(\mathfrak{g} = \mathfrak{g}_{\bar 0} \oplus \mathfrak{g}_{\bar 1},[\cdot,\cdot])$ be a Lie superalgebra. If $\mathfrak{g}$ admits an odd symmetric non-degenerate and invariant bilinar form $B$, then the $\mathfrak{g}_{\bar 0}$-module $\mathfrak{g}_{\bar 0}$ and the dual of the $\mathfrak{g}_{\bar 0}$-module $\mathfrak{g}_{\bar 1}$ are isomorphic as $\mathfrak{g}_{\bar 0}$-modules.
\end{lem}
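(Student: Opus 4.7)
The plan is to exhibit the natural candidate for the isomorphism, namely $\widetilde{B}\colon\mathfrak{g}_{\bar 0}\to(\mathfrak{g}_{\bar 1})^{*}$ defined by $\widetilde{B}(X)(Y):=B(X,Y)$ for $X\in\mathfrak{g}_{\bar 0}$, $Y\in\mathfrak{g}_{\bar 1}$, and to verify three items in turn: well-definedness and linearity, bijectivity, and $\mathfrak{g}_{\bar 0}$-equivariance. Note that this is exactly the map already alluded to in the introduction when the two-dimensional case was discussed.

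First I would observe that $\widetilde{B}(X)$ is a genuine element of $(\mathfrak{g}_{\bar 1})^{*}$ and that $\widetilde{B}$ itself is linear; both are immediate from the bilinearity of $B$. For injectivity, assume $\widetilde{B}(X)=0$. Then $B(X,Y)=0$ for every $Y\in\mathfrak{g}_{\bar 1}$, while oddness of $B$ already forces $B(X,Z)=0$ for every $Z\in\mathfrak{g}_{\bar 0}$. Non-degeneracy therefore yields $X=0$. Surjectivity is then free from Proposition \ref{Prop 2.1}, which provides $\dim\mathfrak{g}_{\bar 0}=\dim\mathfrak{g}_{\bar 1}=\dim(\mathfrak{g}_{\bar 1})^{*}$.

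The remaining point is $\mathfrak{g}_{\bar 0}$-equivariance: $\widetilde{B}([Z,X])=Z\cdot\widetilde{B}(X)$ for all $Z,X\in\mathfrak{g}_{\bar 0}$. Evaluated on $Y\in\mathfrak{g}_{\bar 1}$, the left-hand side is $B([Z,X],Y)$, while the right-hand side, using the dual-representation formula $(Z\cdot f)(Y)=-f([Z,Y])$, equals $-B(X,[Z,Y])$. Antisymmetry of the bracket on $\mathfrak{g}_{\bar 0}$ rewrites $B([Z,X],Y)=-B([X,Z],Y)$, and invariance of $B$ then gives $-B([X,Z],Y)=-B(X,[Z,Y])$; the two sides therefore coincide.

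I do not expect a genuine obstacle here: the lemma is essentially a careful unpacking of definitions plus the standard manipulation for invariant bilinear forms. The only mild subtlety is bookkeeping with oddness of $B$, which is precisely the ingredient that turns non-degeneracy of $B$ on $\mathfrak{g}$ into injectivity of the restricted pairing $\mathfrak{g}_{\bar 0}\times\mathfrak{g}_{\bar 1}\to\mathbb{K}$.
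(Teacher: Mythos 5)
Your proof is correct and follows essentially the same route as the paper: the map $X\mapsto B(X,\cdot)$, bijectivity via oddness plus non-degeneracy of $B$ (the paper simply declares this "clear" while you spell it out, together with the dimension count from Proposition \ref{Prop 2.1}), and equivariance via antisymmetry of the bracket followed by invariance of $B$. No gaps.
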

\begin{proof}
We define $\Phi : \mathfrak{g}_{\bar 0} \to (\mathfrak{g}_{\bar 1})^*$ as $\Phi(X):=B(X,\cdot)$ for all $X \in \mathfrak{g}_{\bar 0}$. It is clear that $\Phi$ is an isomorphism of vector spaces.
For $X,A \in \mathfrak{g}_{\bar 0}, X'\in \mathfrak{g}_{\bar 1}$ we obtain
\begin{align*}
\Phi(A \cdot X)(X')&=B(A \cdot X,\cdot)(X')\\
&=B(A \cdot X,X')\\
&=B([A,X],X')\\
&=-B(X,[A,X'])\\
&=-\Phi(X)([A,X'])\\
&=(A\cdot\Phi(X))(X').
\end{align*}
Consequently $\Phi(A \cdot X)=A\cdot\Phi(X)$, which proves that $\Phi$ is an isomorphisms of $\mathfrak{g}_{\bar 0}$-modules.
\end{proof}

\begin{lem}\label{lemmma_1}
If $(\mathfrak{g},[\cdot,\cdot])$ is a nilpotent Lie algebra such that $\mbox{\rm dim }\mathfrak{g} \geq 2$, then   $\mbox{\rm dim }\mathfrak{g} - \mbox{\rm dim }[\mathfrak{g},\mathfrak{g}] \geq 2$.
\end{lem}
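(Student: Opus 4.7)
The plan is to argue by contradiction. Suppose $\dim\mathfrak{g}-\dim[\mathfrak{g},\mathfrak{g}]\leq 1$. Since $\mathfrak{g}$ is nilpotent and non-trivial, the derived ideal is a proper subspace, $[\mathfrak{g},\mathfrak{g}]\subsetneq\mathfrak{g}$, so the only remaining possibility is $\dim\mathfrak{g}/[\mathfrak{g},\mathfrak{g}]=1$. Pick $x\in\mathfrak{g}$ whose class generates this quotient, so that
$$\mathfrak{g}=\mathbb{K}x+[\mathfrak{g},\mathfrak{g}].$$

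Next, I would plug this decomposition into the bracket. Using bilinearity and $[x,x]=0$,
$$[\mathfrak{g},\mathfrak{g}]=[\mathbb{K}x+[\mathfrak{g},\mathfrak{g}],\,\mathbb{K}x+[\mathfrak{g},\mathfrak{g}]]\subset [\mathbb{K}x,[\mathfrak{g},\mathfrak{g}]]+[[\mathfrak{g},\mathfrak{g}],[\mathfrak{g},\mathfrak{g}]]\subset [\mathfrak{g},[\mathfrak{g},\mathfrak{g}]]=\mathfrak{g}^{3},$$
where $\mathfrak{g}^{k}$ denotes the lower central series defined by $\mathfrak{g}^{1}=\mathfrak{g}$ and $\mathfrak{g}^{k+1}=[\mathfrak{g},\mathfrak{g}^{k}]$. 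Since the reverse inclusion $\mathfrak{g}^{3}\subset\mathfrak{g}^{2}=[\mathfrak{g},\mathfrak{g}]$ is automatic, we obtain the equality $[\mathfrak{g},\mathfrak{g}]=\mathfrak{g}^{3}$.

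Finally, a straightforward induction shows $[\mathfrak{g},\mathfrak{g}]=\mathfrak{g}^{k}$ for every $k\geq 2$: applying $[\mathfrak{g},\cdot\,]$ to the equality $\mathfrak{g}^{k-1}=\mathfrak{g}^{2}$ gives $\mathfrak{g}^{k}=\mathfrak{g}^{3}=\mathfrak{g}^{2}$. Since $\mathfrak{g}$ is nilpotent, $\mathfrak{g}^{k}=\{0\}$ for $k$ large enough, forcing $[\mathfrak{g},\mathfrak{g}]=\{0\}$. Then $\mathfrak{g}$ is abelian and the hypothesis $\dim\mathfrak{g}/[\mathfrak{g},\mathfrak{g}]=1$ yields $\dim\mathfrak{g}=1$, which contradicts $\dim\mathfrak{g}\geq 2$. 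I do not anticipate any real obstacle here; the key trick is simply the observation that a codimension one derived subalgebra in a nilpotent Lie algebra cannot have any strictly shorter lower central series than $\mathfrak{g}$ itself, and the nilpotency then collapses everything to zero.
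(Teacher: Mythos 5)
Your proof is correct, and it takes a genuinely different route from the one in the paper. The paper invokes Engel's theorem to produce a basis $\{e_1,\dots,e_m\}$ adapted to a flag of $\operatorname{ad}$-invariant subspaces with $[X,e_i]\in\operatorname{span}_{\mathbb K}\{e_1,\dots,e_{i-1}\}$, and then reads off directly that every bracket $[e_i,e_j]$ with $i<j$ lies in $\operatorname{span}_{\mathbb K}\{e_1,\dots,e_{m-2}\}$, whence $\dim[\mathfrak{g},\mathfrak{g}]\leq m-2$. You instead run the standard ``generation'' argument: if $[\mathfrak{g},\mathfrak{g}]$ had codimension one, then $\mathfrak{g}=\mathbb{K}x+[\mathfrak{g},\mathfrak{g}]$ would force $\mathfrak{g}^{2}\subset\mathfrak{g}^{3}$, the lower central series would stabilize, and nilpotency would collapse $[\mathfrak{g},\mathfrak{g}]$ to zero --- contradiction. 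All steps check out (the absorption of $[[\mathfrak{g},\mathfrak{g}],\mathbb{K}x]$ via skew-symmetry and the induction $\mathfrak{g}^{k}=\mathfrak{g}^{2}$ are fine); your argument is essentially the fact that $\dim\mathfrak{g}/[\mathfrak{g},\mathfrak{g}]$ bounds below the number of generators of a nilpotent Lie algebra. Your route is arguably more elementary and more robust: it avoids Engel's theorem entirely and works over any field without finiteness or characteristic caveats, while the paper's version has the minor virtue of exhibiting explicitly a two-dimensional complement to $[\mathfrak{g},\mathfrak{g}]$ spanned by the top two vectors of an Engel basis.
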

\begin{proof}
Let $(\mathfrak{g},[\cdot,\cdot])$ be a nilpotent Lie algebra verifying $\mbox{\rm dim }\mathfrak{g} = m \geq 2$. By Engel's Theorem, there exists a basis $\mathcal{B} := \{e_1,\dots,e_m\}$ of $\mathfrak{g}$ such that: for all $X \in \mathfrak{g},$
\begin{itemize}
\item[1.] $[X,e_1]=0$, 
\item[2.] $[X,e_i]=\sum_{j=1}^{i-1}a_{ji}(X)e_j$, for any $i \in \{2,\dots,m\}$.
\end{itemize}
Consequently, $[e_1,\mathfrak{g}] = \{0\}$ and  for any $i \in \{2,\dots,m-1\}, j \in \{i+1,\dots,m\}$, $$[e_i,e_j] \in {\rm span}_{\mathbb K}\{e_1,\dots,e_{i-1}\}.$$
We conclude that $[\mathfrak{g},\mathfrak{g}] \subset {\rm span}_{\mathbb K}\{e_1,\dots,e_{m-2}\}$.
It follows that $\mbox{\rm dim }[\mathfrak{g},\mathfrak{g}] \leq m-2$, this is $\mbox{\rm dim }\mathfrak{g} - \mbox{\rm dim }[\mathfrak{g},\mathfrak{g}] \geq 2$ as  required.
\end{proof}

\begin{thm}
 There are no odd-quadratic Lie superalgebra $(\mathfrak{g},[\cdot,\cdot],B)$ such that the $\mathfrak{g}_{\bar 0}$-module $\mathfrak{g}_{\bar 1}$ is filiform and $\mbox{\rm dim }(\mathfrak{g}_{\bar 0})\geq 2$.
 \end{thm}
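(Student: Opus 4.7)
The plan is to derive a contradiction by combining the three preparatory lemmas. Suppose such an odd-quadratic $(\mathfrak{g},[\cdot,\cdot],B)$ exists with $\mathfrak{g}_{\bar 1}$ a filiform $\mathfrak{g}_{\bar 0}$-module and $\dim \mathfrak{g}_{\bar 0}\geq 2$. By Lemma \ref{lemmma_2}, the dual module $(\mathfrak{g}_{\bar 1})^{*}$ is also a filiform $\mathfrak{g}_{\bar 0}$-module. By Lemma \ref{lemmma_3}, the map $\Phi(X)=B(X,\cdot)$ is an isomorphism of $\mathfrak{g}_{\bar 0}$-modules between $\mathfrak{g}_{\bar 0}$ (with the adjoint action) and $(\mathfrak{g}_{\bar 1})^{*}$. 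Transporting the filiform filtration through $\Phi$, we conclude that $\mathfrak{g}_{\bar 0}$ itself is a filiform $\mathfrak{g}_{\bar 0}$-module of dimension $m=\dim\mathfrak{g}_{\bar 0}$.

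From the definition of filiform module applied to $V=\mathfrak{g}_{\bar 0}$ with the adjoint action, there is a chain $\{0\}=V_0\subset V_1\subset\cdots\subset V_m=\mathfrak{g}_{\bar 0}$ with $\dim V_i=i$ and $\mathfrak{g}_{\bar 0}\cdot V_i=V_{i-1}$. Taking $i=m$ yields $[\mathfrak{g}_{\bar 0},\mathfrak{g}_{\bar 0}]=V_{m-1}$, so that
\[
\dim[\mathfrak{g}_{\bar 0},\mathfrak{g}_{\bar 0}]=\dim\mathfrak{g}_{\bar 0}-1.
\]
On the other hand, the very same filtration shows that each $\mathrm{ad}(X)$, $X\in\mathfrak{g}_{\bar 0}$, sends $V_i$ into $V_{i-1}$, hence is nilpotent. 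By Engel's theorem, $\mathfrak{g}_{\bar 0}$ is a nilpotent Lie algebra.

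Since $\dim\mathfrak{g}_{\bar 0}\geq 2$, Lemma \ref{lemmma_1} then gives
\[
\dim\mathfrak{g}_{\bar 0}-\dim[\mathfrak{g}_{\bar 0},\mathfrak{g}_{\bar 0}]\geq 2,
\]
which contradicts the equality obtained in the previous paragraph. Therefore no such odd-quadratic Lie superalgebra exists. The proof is essentially a direct assembly of the three lemmas; the only non-mechanical point is recognizing that the filiform module structure on $\mathfrak{g}_{\bar 0}$ simultaneously forces $\dim[\mathfrak{g}_{\bar 0},\mathfrak{g}_{\bar 0}]=\dim\mathfrak{g}_{\bar 0}-1$ and nilpotency, which are contradictory by Engel's theorem for $\dim\mathfrak{g}_{\bar 0}\geq 2$. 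That double role of the filiform filtration is the conceptual heart of the argument, and checking it carefully is the main (and only) obstacle.
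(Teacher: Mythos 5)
Your proof is correct and follows the same route as the paper: dualize the filiform module (Lemma \ref{lemmma_2}), transport the structure to $\mathfrak{g}_{\bar 0}$ via the module isomorphism $\Phi$ (Lemma \ref{lemmma_3}), and then observe that the resulting filtration forces both $\dim[\mathfrak{g}_{\bar 0},\mathfrak{g}_{\bar 0}]=\dim\mathfrak{g}_{\bar 0}-1$ and nilpotency, contradicting Lemma \ref{lemmma_1}. The only cosmetic difference is that you justify nilpotency via Engel's theorem applied to the adjoint operators, whereas the paper reads it off directly from the chain of ideals; both are valid.
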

\begin{proof} Suppose that $(\mathfrak{g},[\cdot,\cdot],B)$ is an odd-quadratic Lie superalgebra such that the $\mathfrak{g}_{\bar 0}$-module $\mathfrak{g}_{\bar 1}$ is filiform and $\mbox{\rm dim }(\mathfrak{g}_{\bar 0})\geq 2$.
By Lemma \ref{lemmma_2}, the $\mathfrak{g}_{\bar 0}$-module $(\mathfrak{g}_{\bar 1})^*$ is filiform. It follows, by Lemma \ref{lemmma_3} that the $\mathfrak{g}_{\bar 0}$-module $\mathfrak{g}_{\bar 0}$ is filiform. Therefore there exists $\{I_0,\dots,I_m\}$, being $m=\mbox{\rm dim }\mathfrak{g}_{\bar 0}$, satisfying
\begin{itemize}
\item[1.] $I_i$ is an ideal of $\mathfrak{g}_{\bar 0}$ with $\mbox{\rm dim }(I_i)=i$ for $i \in \{0,\dots,m\}$.
\item[2.] $\mathfrak{g}_{\bar 0}=I_m \supset I_{m-1} \supset \cdots \supset I_1 \supset I_0 = \{0\}$.
\item[3.] $[\mathfrak{g}_{\bar 0},I_i]=I_{i-1}$, for $i \in \{1,\dots,m\}$.
\end{itemize}
This implies that $(\mathfrak{g}_{\bar 0},[\cdot,\cdot]|_{\mathfrak{g}_{\bar 0} \times \mathfrak{g}_{\bar 0}})$ is a $n$-dimensional nilpotent Lie algebra with $n \geq 2$, and $[\mathfrak{g}_{\bar 0},\mathfrak{g}_{\bar 0}] = I_{m-1}$. So $$\mbox{\rm dim }[\mathfrak{g}_{\bar 0},\mathfrak{g}_{\bar 0}] = (\mbox{\rm dim }\mathfrak{g}_{\bar 0})-1$$ which contradicts Lemma \ref{lemmma_1}. This completes the proof.
\end{proof}

\begin{cor}\label{Coro21}
The unique non-zero odd-quadratic Lie superalgebra $(\mathfrak{g},[\cdot,\cdot],B)$ such that the $\mathfrak{g}_{\bar 0}$-module $\mathfrak{g}_{\bar 1}$ is  filiform is the $2$-dimensional Lie superalgebra $\mathfrak{g}=\mathfrak{g}_{\bar 0}\oplus \mathfrak{g}_{\bar 1}$ such that $\mbox{\rm dim }\mathfrak{g}_{\bar 0} = \mbox{\rm dim }\mathfrak{g}_{\bar 1}=1$ with zero product.
\end{cor}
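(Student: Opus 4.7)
The plan is to read the statement as an essentially immediate consequence of Proposition \ref{Prop 2.1} and the Theorem just proved, followed by a finishing analysis in dimension two. Let $(\mathfrak{g}, B)$ be any non-zero odd-quadratic Lie superalgebra with $\mathfrak{g}_{\bar 1}$ a filiform $\mathfrak{g}_{\bar 0}$-module. Proposition \ref{Prop 2.1} forces $\dim \mathfrak{g}_{\bar 0} = \dim \mathfrak{g}_{\bar 1}$, and the hypothesis $\mathfrak{g} \neq \{0\}$ makes this common value at least $1$. The Theorem just proved rules out the case $\dim \mathfrak{g}_{\bar 0} \geq 2$, hence $\dim \mathfrak{g}_{\bar 0} = \dim \mathfrak{g}_{\bar 1} = 1$, and so $\dim \mathfrak{g} = 2$.

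To identify the bracket I would fix a generator $A$ of $\mathfrak{g}_{\bar 0}$ and a generator $X$ of $\mathfrak{g}_{\bar 1}$ and inspect the three homogeneous brackets. Since $\dim \mathfrak{g}_{\bar 0} = 1$, one has $[A,A] = 0$ automatically. The filiform-module definition applied with $m = 1$ collapses to $V_1 = \mathfrak{g}_{\bar 1}$, $V_0 = \{0\}$, and condition~4 then reads $\mathfrak{g}_{\bar 0} \cdot V_1 = V_0 = \{0\}$, i.e.\ $[A,X] = 0$. The only remaining bracket is $[X,X] = \mu A$ for some $\mu \in \mathbb{K}$.

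The step I expect to be most delicate is forcing $\mu = 0$, because the odd, supersymmetric, invariant and non-degeneracy relations among the basis vectors $A$ and $X$ all collapse to tautologies when evaluated directly (every expression of the form $B([X,X],A)$ or $B(X,[X,A])$ is automatically zero by oddness, and $B([X,X],X) = B(X,[X,X])$ is a trivial consequence of supersymmetry). The plan here is to bring in the additional rigidity coming from the filiform chain, for instance by following the line of Lemmas \ref{lemmma_2} and \ref{lemmma_3} to transport the filiform structure of $\mathfrak{g}_{\bar 1}$ through the isomorphism $\Phi:\mathfrak{g}_{\bar 0} \to (\mathfrak{g}_{\bar 1})^{*}$ onto $\mathfrak{g}_{\bar 0}$ itself, so that the trivial filiform chain of $\mathfrak{g}_{\bar 0}$ as a $\mathfrak{g}_{\bar 0}$-module forbids any nonzero value of $[X,X]$. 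Once $\mu = 0$ is in hand, every bracket vanishes and $\mathfrak{g}$ is the $2$-dimensional abelian Lie superalgebra announced in the corollary.
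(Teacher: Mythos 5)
Your reduction to $\dim\mathfrak{g}_{\bar 0}=\dim\mathfrak{g}_{\bar 1}=1$ via Proposition \ref{Prop 2.1} and the non-existence theorem is correct and is exactly the route the paper (implicitly) takes, as is the observation that $[A,A]=0$ automatically and $[A,X]=0$ from the filiform condition with $m=1$. You have also put your finger on precisely the point the paper glosses over: nothing in the odd-quadratic axioms forces $[X,X]=0$, since every instance of supersymmetry, oddness and invariance evaluated on the basis $\{A,X\}$ with $[X,X]=\mu A$ is an identity.

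The genuine gap is in your proposed repair, and it cannot be closed. Transporting the filiform flag of $\mathfrak{g}_{\bar 1}$ to $\mathfrak{g}_{\bar 0}$ through $\Phi:\mathfrak{g}_{\bar 0}\to(\mathfrak{g}_{\bar 1})^*$ (Lemmas \ref{lemmma_2} and \ref{lemmma_3}) only yields information about the $\mathfrak{g}_{\bar 0}$-module structures, i.e.\ about $[\mathfrak{g}_{\bar 0},\mathfrak{g}_{\bar 0}]$ and $[\mathfrak{g}_{\bar 0},\mathfrak{g}_{\bar 1}]$; it says nothing whatsoever about the symmetric pairing $S^2\mathfrak{g}_{\bar 1}\to\mathfrak{g}_{\bar 0}$, which is where $\mu$ lives. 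In dimension one all of that transported information is vacuous ($[\mathfrak{g}_{\bar 0},\mathfrak{g}_{\bar 0}]=0$ is automatic), so the argument produces no constraint on $\mu$. Worse, no argument can: taking $[X,X]=A$, all other brackets zero, and $B(A,X)=B(X,A)=1$ gives a bona fide Lie superalgebra (super Jacobi reduces to $[X,[X,X]]=[X,A]=0$) carrying an odd, supersymmetric, invariant, non-degenerate $B$, and $\mathfrak{g}_{\bar 1}$ is a (trivially) filiform $\mathfrak{g}_{\bar 0}$-module. So $\mu=0$ is not a consequence of the hypotheses; the "zero product'' clause of Corollary \ref{Coro21} is an overstatement that the paper itself never proves. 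What you can honestly conclude is only that $\dim\mathfrak{g}=2$ with $\mathfrak{g}_{\bar 0}=\mathbb{K}A$ acting trivially and $[X,X]\in\mathbb{K}A$; either flag the statement as needing this correction, or stop your proof there.
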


\section{Lie superalgebras with a weak filiform module as an odd part via odd double extensions of odd-quadratic ones}

In this work  we consider a  less restrictive concept for our purpose, as  we can construct a structure  theory and  present  examples, which we name by  weak filiform. Firstly, we introduce the  concept of weak filiform module in general context.

\begin{defn}
Let $\mathfrak{g}$ be a Lie algebra and  $V$ be a vector space with $\mbox{\rm dim }V = m > 0$. We say that the ${\mathfrak g}$-module $V$ is {\it weak filiform} if the action of ${\mathfrak g}$ on $V$ defines a {\it flag}, in the sense  that there exists a decreasing subsequence of vector subspaces in its underlying vector space $V$, $$V = V_m \supset V_{m-1} \supset \dots \supset V_2 \supset V_1 = \{0\},$$ with dimensions $\mbox{\rm dim } V_i = i$, for $i \in \{2,\dots,m\}$ and such that $[{\mathfrak g},V_i]= V_{i-1}$ for any $i \in \{2,\dots,m\}$. 
\end{defn}

\noindent Now we present the notion of weak filiform Lie superalgebra.

\begin{defn}
Let ${\mathfrak g} = {\mathfrak g}_{\bar 0} \oplus {\mathfrak g}_{\bar 1}$ be a Lie superalgebra with $\mbox{\rm dim } {\mathfrak g}_{\bar 1} =m  > 0$. We say that ${\mathfrak g}_{\bar 1}$ has the structure of {\it weak  filiform ${\mathfrak g}_{\bar 0}$-module} if the action of ${\mathfrak g}_{\bar 0}$ on ${\mathfrak g}_{\bar 1} $ defines a {\it flag}, that is, a decreasing subsequence of vector subspaces in its underlying vector space ${\mathfrak g}_{\bar 1}$, $${\mathfrak g}_{\bar 1} = V_m \supset V_{m-1} \supset \dots \supset V_2 \supset V_1 = \{0\},$$ with dimensions $\mbox{\rm dim } V_i = i$ for any $i \in \{2,\dots,m\}$, such that $[{\mathfrak g}_{\bar 0},V_i]= V_{i-1}$ for $i \in \{2,\dots,m\}$. 
To abbreviate, in the sequel we will refer to ${\mathfrak g} = {\mathfrak g}_{\bar 0} \oplus {\mathfrak g}_{\bar 1}$ as a Lie superalgebra of \textit{ weak filiform type}. We set $V_2 := \mathbb{K}u_2 \oplus \mathbb{K}v_2$ and  $V_i/V_{i-1}:=\mathbb{K}e_i$ for $i \in \{3,\dots,m\}$.
\end{defn}

\begin{exa}
Let ${\mathfrak h}$ be the Lie algebra generated by the basis $\{X_1,\dots, X_m\}$ with product $[X_1,X_i]=X_{i+1}$, for $i \in \{2,\dots,m-1\}$. Consider the Lie superalgebra ${\mathfrak g} = {\mathfrak h} \oplus P({\mathfrak h}^*)$, as usual $ {\mathfrak h}^*$ is the dual of $  {\mathfrak h}$. Then a basis of ${\mathfrak g}$ is $\{X_1,\dots,X_m,X_1^*,\dots,X_m^*\},$ being $\{X_1^*,\dots,X_m^*\}$ a dual basis of $P({\mathfrak h}^*)$, and we get  ${\mathfrak g}_{\bar 0}= {\mathfrak h}$ and ${\mathfrak g}_{\bar 1} = P({\mathfrak h}^*).$
Let the product in ${\mathfrak g}$ be defined by $[X_i,X_j^*]=X_i \cdot X_j^* = - X_j^* \circ {\rm ad}_{\mathfrak h}(X_i)$ and $[X_i^*,X_j^*]=0$, for $i,j \in \{1,\dots,m\}$. We have that $X_m, X_1^*,X_2^*$ are the central elements and   non-zero products in ${\mathfrak g}$ are
$$\begin{cases}
[X_1,X_i] = X_{i+1} \mbox{ for } i \in \{2,\dots,m-1\} \\
[X_1,X_i^*]=-X_{i-1}^* \mbox{ for } i \in \{3,\dots,m\}  \\ 
[X_i,X_{i+1}^*]=X_1^* \mbox{ for } i \in \{2,\dots,m-1\} \end{cases}$$
We define $B(X,F):=F(X)$ for any $X \in \mathfrak{h}$ and $F \in \mathfrak{h}^*$. Then $({\mathfrak g} = {\mathfrak h} \oplus P({\mathfrak h}^*),B)$ is an odd-quadratic Lie superalgebra such that the ${\mathfrak h}$-module $P({\mathfrak h}^*)$ is weak filiform with flag $$P({\mathfrak h}^*) = V_m \supset V_{m-1} \supset \dots \supset V_2
\supset V_1 = \{0\}$$ where each $V_i$ has the basis $\{X_1^*,\dots,X_i^*\}$ for $i \in \{2,\dots,m\}$.
\end{exa}

\begin{exa}
Let $m \in \mathbb{N} \setminus\{0,1,2\}$. Let us consider the filiform Lie algebra $(\tilde{\mathfrak{g}}_m,[\cdot,\cdot]_m)$ where $\tilde{\mathfrak{g}}_m:={\rm span}_{\mathbb K}\{e_1,\dots,e_m\}$ and $[e_1,e_i]_m:=e_{i-1}$ for $i \in \{3,\dots,m\}$.

Now, we consider the $\mathbb{Z}_2$-graded vector space $\mathfrak{h}=\mathfrak{h}_{\bar 0}\oplus \mathfrak{h}_{\bar 1}$, where $\mathfrak{h}_{\bar 0}:=\tilde{\mathfrak{g}}_m$ and $\mathfrak{h}_{\bar 1}:=(\tilde{\mathfrak{g}}_m)^*$, the dual of the underlying vector space of $\tilde{\mathfrak{g}}_m$.

The product $$[X+f, Y+h]:=[X,Y]_m-h \circ {\rm ad}_X + f\circ {\rm ad}_Y,$$ for $X,Y \in \mathfrak{h}_{\bar 0}, f,h \in \mathfrak{h}_{\bar 1}$, where ${\rm ad}_X:=[X,\cdot]_m$, defines a Lie superalgebra structure on $\mathfrak{h}$.

Let $B : \mathfrak{h} \times \mathfrak{h} \to \mathbb{K}$ the bilinear form given as
$$B(X,f)=B(f,X):=f(X), \hspace{2cm} B(X,Y)=B(f,h):=0$$ for $X,Y \in \mathfrak{h}_{\bar 0}, f,h \in \mathfrak{h}_{\bar 1}$. We get $B$ is odd symmetric invariant and non-degenerate. Then $(\mathfrak{h},[\cdot,\cdot],B)$ is an odd quadratic Lie superalgebra. We can easily check that the $\mathfrak{h}_{\bar 0}$-module $\mathfrak{h}_{\bar 1}$ is a weak filiform $\mathfrak{h}_{\bar 0}$-module.
\end{exa}

Next we study the structure of odd-quadratic Lie superalgebras $({\mathfrak g}={\mathfrak g}_{\bar 0}\oplus {\mathfrak g}_{\bar 1},B)$  $({\mathfrak g}={\mathfrak g}_{\bar 0}\oplus {\mathfrak g}_{\bar 1},B)$ such that the ${\mathfrak g}_{\bar 0}$-module ${\mathfrak g}_{\bar 1}$ has the structure of weak filiform ${\mathfrak g}_{\bar 0}$-module.

\begin{prop}\label{lemgo}
Let $({\mathfrak g}={\mathfrak g}_{\bar 0}\oplus {\mathfrak g}_{\bar 1},B)$ be an odd-quadratic Lie superalgebra such that the ${\mathfrak g}_{\bar 0}$-module ${\mathfrak g}_{\bar 1}$ has the structure of weak filiform ${\mathfrak g}_{\bar 0}$-module. Then the Lie algebra  $ {\mathfrak g}_{\bar 0}$ is nilpotent.
\end{prop}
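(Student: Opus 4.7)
The plan is to apply Engel's theorem to the Lie algebra ${\mathfrak g}_{\bar 0}$, reducing the nilpotency of ${\mathfrak g}_{\bar 0}$ to the nilpotency of every $\mathrm{ad}(X)$ with $X \in {\mathfrak g}_{\bar 0}$. The weak filiform flag will supply nilpotency of the action on ${\mathfrak g}_{\bar 1}$, and the odd-quadratic structure will then transport that to the adjoint action on ${\mathfrak g}_{\bar 0}$.

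The first step is to exploit the flag. Fix $X\in{\mathfrak g}_{\bar 0}$ and let $\pi(X)\colon {\mathfrak g}_{\bar 1}\to{\mathfrak g}_{\bar 1}$ denote the action $Y\mapsto [X,Y]$. From $[{\mathfrak g}_{\bar 0},V_i]=V_{i-1}$ (together with $V_2\supset V_1=\{0\}$) one obtains the strict chain
\[
\pi(X)(V_i)\subset V_{i-1}\qquad (i\in\{2,\ldots,m\}),
\]
so iterating yields $\pi(X)^{m}=0$ on ${\mathfrak g}_{\bar 1}$. Hence each $\pi(X)$ is a nilpotent endomorphism of ${\mathfrak g}_{\bar 1}$.

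The second step is to pass to the dual. The dual representation $\pi^{*}(X):=-\pi(X)^{t}$ on $({\mathfrak g}_{\bar 1})^{*}$ is also nilpotent, because the transpose of a nilpotent linear map is nilpotent with the same index. Now invoke Lemma \ref{lemmma_3}: the map $\Phi\colon {\mathfrak g}_{\bar 0}\to({\mathfrak g}_{\bar 1})^{*}$, $\Phi(Y)=B(Y,\cdot)$, is an isomorphism of ${\mathfrak g}_{\bar 0}$-modules. Equivalently, $\Phi$ intertwines $\mathrm{ad}(X)\big|_{{\mathfrak g}_{\bar 0}}$ with $\pi^{*}(X)$, so $\mathrm{ad}(X)\big|_{{\mathfrak g}_{\bar 0}}$ is nilpotent for every $X\in{\mathfrak g}_{\bar 0}$.

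The final step is to apply Engel's theorem to conclude that ${\mathfrak g}_{\bar 0}$ is a nilpotent Lie algebra. No serious obstacle arises; the only point requiring care is that the flag only descends to $V_{2}$ rather than $V_{1}=\{0\}$, but a single extra step $\pi(X)(V_{2})\subset V_{1}=\{0\}$ handles this and does not affect nilpotency of $\pi(X)$. Everything else is formal once Lemma \ref{lemmma_3} has been applied to transfer nilpotency from the action on ${\mathfrak g}_{\bar 1}$ to the adjoint action on ${\mathfrak g}_{\bar 0}$.
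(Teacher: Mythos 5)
Your argument is correct and follows essentially the same route as the paper's own proof: use the weak filiform flag to see that each $\pi(X)$ acts nilpotently on ${\mathfrak g}_{\bar 1}$, pass to the (equally nilpotent) dual representation, transport this to $\mathrm{ad}(X)\big|_{{\mathfrak g}_{\bar 0}}$ via the module isomorphism ${\mathfrak g}_{\bar 0}\cong({\mathfrak g}_{\bar 1})^{*}$ given by $B$, and finish with Engel's theorem. The only cosmetic difference is that you cite Lemma \ref{lemmma_3} for the intertwining isomorphism where the paper cites Proposition \ref{Prop 2.1}; these provide the same map.
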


\begin{proof} 
Since $({\mathfrak g},B)$ is an odd-quadratic Lie superalgebra, then there exists an isomorphism of ${\mathfrak g}_{\bar 0}$-modules  $\varphi: {\mathfrak g}_{\bar 0} \to ({\mathfrak g}_{\bar 1})^{*}$ given by $\varphi(X)=B( X,.),$ for all $X  \in {\mathfrak g}_{\bar 0}$ (see Proposition \ref{Prop 2.1}). If, in addition, the ${\mathfrak g}_{\bar 0}$-module ${\mathfrak g}_{\bar 1}$ has the structure of weak filiform ${\mathfrak g}_{\bar 0}$-module, then the representation $\pi : {\mathfrak g}_{\bar 0} \to \mathfrak{gl}({\mathfrak g}_{\bar 1})$ defined by $\pi(x):=(\mbox{\rm ad}_{\mathfrak g}(x))|_{{\mathfrak g}_{\bar 1}}$ for $x \in {\mathfrak g}_{\bar 0}$, satisfies
there exists a non-zero  $ n \in \mathbb{N}$   such that for any $x \in {\mathfrak g}_{\bar 0} $   verifies   $\bigl(\pi(x)\bigr)^n=0. $

Let us consider the dual representation $\pi^* : {\mathfrak g}_{\bar 0} \to \mathfrak{gl}({\mathfrak g}_{\bar 1}^*)$. For any $x \in {\mathfrak g}_{\bar 0}$ it maps $\pi^*(x) : {\mathfrak g}_{\bar 1}^* \to {\mathfrak g}_{\bar 1}^*$ which assigns for any $f \in {\mathfrak g}_{\bar 1}^*$, $$ \pi^*(x)(f)=-f \circ \pi(x).$$ It follows that the dual representation $\pi^*  $ also verifies that
there exists a non-zero $ n \in \mathbb{N} $ such that for any $x \in {\mathfrak g}_{\bar 0}$ verifies   $\bigl(\pi^*(x)\bigr)^n=0.$

Therefore, by the isomorphism of Proposition \ref{Prop 2.1}, there exists a non-zero $n \in \mathbb{N} $ such that $\bigl(\mbox{\rm ad}_{{\mathfrak g}_{\bar 0}}(x)\bigr)^n=0$ for any $x \in {\mathfrak g}_{\bar 0}$, which proves that ${\mathfrak g}_{\bar 0}$ is a nilpotent Lie algebra.
\end{proof}

\begin{rem}\label{lemago2}
The previous proposition also holds in case ${\mathfrak g}_{\bar 1}$ is a filiform ${\mathfrak g}_{\bar 0}$-module. That is, if $({\mathfrak g}={\mathfrak g}_{\bar 0}\oplus {\mathfrak g}_{\bar 1},B)$ is an odd-quadratic Lie superalgebra such that the ${\mathfrak g}_{\bar 0}$-module ${\mathfrak g}_{\bar 1}$ has the structure of filiform ${\mathfrak g}_{\bar 0}$-module then the Lie algebra ${\mathfrak g}_{\bar 0}$ is nilpotent.
\end{rem}

\begin{prop}
Let $({\mathfrak g}={\mathfrak g}_{\bar 0}\oplus {\mathfrak g}_{\bar 1},B)$ be an odd-quadratic Lie superalgebra such that the ${\mathfrak g}_{\bar 0}$-module ${\mathfrak g}_{\bar 1}$ has the structure of weak filiform ${\mathfrak g}_{\bar 0}$-module. Then the Lie superalgebra $ {\mathfrak g} $ is nilpotent.
\end{prop}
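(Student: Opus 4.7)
The plan is to apply the super-analog of Engel's theorem for Lie superalgebras, which asserts that $\mathfrak{g}$ is nilpotent whenever ${\rm ad}_X$ is a nilpotent endomorphism of $\mathfrak{g}$ for every homogeneous $X \in \mathfrak{g}$. Since by the previous proposition $\mathfrak{g}_{\bar 0}$ is already a nilpotent Lie algebra, what remains is to verify this pointwise nilpotency of ${\rm ad}$ separately for even and odd elements of $\mathfrak{g}$.

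First I would handle the case $X \in \mathfrak{g}_{\bar 0}$. The nilpotency of $\mathfrak{g}_{\bar 0}$ gives, via the ordinary Engel theorem, that ${\rm ad}_X|_{\mathfrak{g}_{\bar 0}}$ is nilpotent. On the odd part, the weak filiform flag $\{0\} = V_1 \subset V_2 \subset \cdots \subset V_m = \mathfrak{g}_{\bar 1}$ together with $[\mathfrak{g}_{\bar 0}, V_i] = V_{i-1}$ forces ${\rm ad}_X(V_i) \subseteq V_{i-1}$ and hence $\bigl({\rm ad}_X|_{\mathfrak{g}_{\bar 1}}\bigr)^{m-1} = 0$. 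Consequently ${\rm ad}_X$ is nilpotent on all of $\mathfrak{g}$ for every $X \in \mathfrak{g}_{\bar 0}$.

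Next I would handle the case $X \in \mathfrak{g}_{\bar 1}$ via a single identity obtained from the super Jacobi identity applied to the triple $(X, X, Y)$ with arbitrary homogeneous $Y$. A short parity case analysis (treating $Y$ even and $Y$ odd separately, and using the antisymmetry rules) delivers in both cases the relation $2[X, [X, Y]] = [[X, X], Y]$, that is, ${\rm ad}_X^2 = \tfrac{1}{2}\,{\rm ad}_{[X,X]}$ as endomorphisms of $\mathfrak{g}$. Since $[X, X] \in \mathfrak{g}_{\bar 0}$, the preceding paragraph yields the nilpotency of ${\rm ad}_{[X,X]}$, hence of ${\rm ad}_X^2$, and therefore of ${\rm ad}_X$ itself. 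Applying the super-Engel theorem then concludes that $\mathfrak{g}$ is nilpotent.

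The main obstacle is conceptual rather than computational: one must bridge the odd case to the even case, and the identity ${\rm ad}_X^2 = \tfrac{1}{2}\,{\rm ad}_{[X,X]}$ is the decisive reduction that makes this possible. If one prefers to avoid any appeal to a super-Engel statement, an alternative route is to construct an explicit decreasing flag of graded ideals of $\mathfrak{g}$ that simultaneously refines the lower central series of the nilpotent Lie algebra $\mathfrak{g}_{\bar 0}$ and the weak filiform flag on $\mathfrak{g}_{\bar 1}$, while controlling the odd-odd bracket $[\mathfrak{g}_{\bar 1}, \mathfrak{g}_{\bar 1}] \subseteq \mathfrak{g}_{\bar 0}$ through the odd invariance of $B$; the descending central series of $\mathfrak{g}$ then terminates in finitely many steps by a direct calculation along this flag.
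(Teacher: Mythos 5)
Your argument is correct and follows essentially the same route as the paper: the paper verifies exactly the two hypotheses you check (nilpotency of $\mathfrak{g}_{\bar 0}$ from the preceding proposition, and nilpotency of $\mathrm{ad}_X|_{\mathfrak{g}_{\bar 1}}$ for $X\in\mathfrak{g}_{\bar 0}$ from the weak filiform flag) and then simply cites the nilpotency criterion of Rodr\'iguez-Vallarte, Salgado and S\'anchez-Valenzuela as a black box. The only difference is that you make that criterion self-contained, handling odd elements via the identity $\mathrm{ad}_X^2=\tfrac{1}{2}\,\mathrm{ad}_{[X,X]}$ and then invoking the super-analogue of Engel's theorem, which is a legitimate filling-in of the cited step.
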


\begin{proof} 
We have the result by the paper \cite{solvableLieSuperalgebra}, since by Proposition \ref{lemgo}  ${\mathfrak g}_{\bar 0}$ is a nilpotent Lie algebra, and  $\pi(x):=(\mbox{\rm ad}_{\mathfrak g}(x))|_{{\mathfrak g}_{\bar 1}}$ is nilpotent for all $x \in {\mathfrak g}_{\bar 0}$.
\end{proof} 

The center of an odd-quadratic Lie superalgebras $({\mathfrak g}={\mathfrak g}_{\bar 0}\oplus {\mathfrak g}_{\bar 1},B)$ such that the ${\mathfrak g}_{\bar 0}$-module ${\mathfrak g}_{\bar 1}$ has the structure of weak filiform ${\mathfrak g}_{\bar 0}$-module plays an important role in the description of this class of algebras. Recall that for quadratic Lie superalgebras with an invariant scalar product the center is  $\displaystyle {\mathfrak z}({\mathfrak g}) = {\mathfrak z}({\mathfrak g})_{\bar{0}} \oplus  V_1$ (see \cite[ Lemma 2.1]{quadraticFLSA}).

\begin{prop}\label{Lema 3.3}
Let $(\mathfrak{g}={\mathfrak g}_{\bar 0}\oplus {\mathfrak g}_{\bar 1},B)$ be an odd-quadratic non-abelian Lie superalgebra with $\mathfrak{g}_{\bar 0} \neq \{0\}$ and $ \mbox{\rm dim } {\mathfrak g}_{\bar 1} = m > 0,$ and such that ${\mathfrak g}_{\bar 1}$ is a weak filiform ${\mathfrak g}_{\bar 0}$-module with respect to the flag $${\mathfrak g}_{\bar 1} = V_m \supset \dots \supset V_2 \supset V_1 = \{0\}.$$
Then the dimension of the center $\displaystyle {\mathfrak z}({\mathfrak g}) = {\mathfrak z}({\mathfrak g})_{\bar{0}} \oplus {\mathfrak z}({\mathfrak g})_{\bar{1}}$ is $1, 2$ or $3$. Concretely,
\begin{itemize}
\item[i.] $\{0\} \neq {\mathfrak z}({\mathfrak g}_{\bar 0}) = {\mathfrak z}({\mathfrak g}) \cap {\mathfrak g}_{\bar 0} = {\mathfrak z}({\mathfrak g})_{\bar{0}} $ and $\mbox{\rm dim } {\mathfrak z}({\mathfrak g}_{\bar 0})=1$ (i.e., $\mbox{\rm dim }{\mathfrak z}({\mathfrak g})_{\bar 0} =1).$

\item[ii.] ${\mathfrak z}({\mathfrak g})_{\bar{1}} =  {\mathfrak z}({\mathfrak g}) \cap {\mathfrak g}_{\bar 1} \subset V_2$ and $\mbox{\rm dim } ({\mathfrak z}({\mathfrak g}) \cap {\mathfrak g}_{\bar 1}) \leq 2$ (i.e., $\mbox{\rm dim }{\mathfrak z}({\mathfrak g})_{\bar 1} \leq 2)$.  
\end{itemize}
\end{prop}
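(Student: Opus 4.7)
The plan combines three ingredients: the module isomorphism $\varphi\colon\mathfrak{g}_{\bar 0}\to(\mathfrak{g}_{\bar 1})^{*}$ from Proposition \ref{Prop 2.1}, the nilpotency of $\mathfrak{g}_{\bar 0}$ established in Proposition \ref{lemgo}, and the flag relations $[\mathfrak{g}_{\bar 0},V_i]=V_{i-1}$. A useful preliminary observation is that the non-abelian hypothesis forces $m\geq 3$: if $m=2$, then $[\mathfrak{g}_{\bar 0},\mathfrak{g}_{\bar 1}]=[\mathfrak{g}_{\bar 0},V_2]=V_1=\{0\}$; transferred via $\varphi$ this makes the adjoint action of $\mathfrak{g}_{\bar 0}$ on itself trivial, so $\mathfrak{g}_{\bar 0}$ is abelian, and then for $X,Y\in\mathfrak{g}_{\bar 1}$, $Z\in\mathfrak{g}_{\bar 0}$ the invariance identity $B([X,Y],Z)=B(X,[Y,Z])=0$ combined with non-degeneracy yields $[\mathfrak{g}_{\bar 1},\mathfrak{g}_{\bar 1}]=\{0\}$, contradicting non-abelianness. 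From here on I assume $m\geq 3$.

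For part (i), since $\mathfrak{g}_{\bar 0}$ is non-zero and nilpotent, $\mathfrak{z}(\mathfrak{g}_{\bar 0})\neq\{0\}$. To compute its dimension I would observe that $X\in\mathfrak{z}(\mathfrak{g}_{\bar 0})$ if and only if $\varphi(X)$ is $\mathfrak{g}_{\bar 0}$-invariant on $\mathfrak{g}_{\bar 1}$, equivalently $\varphi(X)$ vanishes on $[\mathfrak{g}_{\bar 0},\mathfrak{g}_{\bar 1}]=[\mathfrak{g}_{\bar 0},V_m]=V_{m-1}$. Since $\varphi$ is an isomorphism and the annihilator of $V_{m-1}$ in $(\mathfrak{g}_{\bar 1})^{*}$ has dimension $m-(m-1)=1$, it follows that $\dim\mathfrak{z}(\mathfrak{g}_{\bar 0})=1$. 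The inclusion $\mathfrak{z}(\mathfrak{g})\cap\mathfrak{g}_{\bar 0}\subseteq\mathfrak{z}(\mathfrak{g}_{\bar 0})$ is trivial; for the reverse, pick $X\in\mathfrak{z}(\mathfrak{g}_{\bar 0})$ and $Y\in\mathfrak{g}_{\bar 1}$, and test $[X,Y]\in\mathfrak{g}_{\bar 1}$ against every $Z\in\mathfrak{g}_{\bar 0}$: invariance of $B$ gives $B(Z,[X,Y])=B([Z,X],Y)=0$, and because $B$ pairs $\mathfrak{g}_{\bar 0}$ non-degenerately with $\mathfrak{g}_{\bar 1}$, this forces $[X,Y]=0$, hence $X\in\mathfrak{z}(\mathfrak{g})$.

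For part (ii), write $M:=\{v\in\mathfrak{g}_{\bar 1}:[\mathfrak{g}_{\bar 0},v]=\{0\}\}$. The inclusion $V_2\subseteq M$ is immediate from $[\mathfrak{g}_{\bar 0},V_2]=V_1=\{0\}$, and the main obstacle is the reverse inclusion $M\subseteq V_2$. I would fix a basis $\{e_1,\dots,e_m\}$ adapted to the flag, with $V_2=\mathbb{K}e_1\oplus\mathbb{K}e_2$ and $V_i=V_{i-1}\oplus\mathbb{K}e_i$ for $i\geq 3$, take $v\in V_k\setminus V_{k-1}$ with $k\geq 3$, and write $v=ae_k+u$, $a\neq 0$, $u\in V_{k-1}$. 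For any $x\in\mathfrak{g}_{\bar 0}$, $[x,u]\in V_{k-2}$, so $[x,v]\equiv a[x,e_k]\pmod{V_{k-2}}$. The flag identity $V_{k-1}=[\mathfrak{g}_{\bar 0},V_k]=[\mathfrak{g}_{\bar 0},V_{k-1}]+[\mathfrak{g}_{\bar 0},e_k]=V_{k-2}+[\mathfrak{g}_{\bar 0},e_k]$ shows that the image of $[\mathfrak{g}_{\bar 0},e_k]$ in the non-zero quotient $V_{k-1}/V_{k-2}$ is non-zero; hence some $x\in\mathfrak{g}_{\bar 0}$ satisfies $[x,e_k]\notin V_{k-2}$, giving $[x,v]\notin V_{k-2}$ and in particular $[x,v]\neq 0$, so $v\notin M$. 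Thus $M=V_2$. Since $\mathfrak{z}(\mathfrak{g})\cap\mathfrak{g}_{\bar 1}\subseteq M=V_2$, we obtain $\dim(\mathfrak{z}(\mathfrak{g})\cap\mathfrak{g}_{\bar 1})\leq 2$, and combined with (i) we conclude $\dim\mathfrak{z}(\mathfrak{g})\in\{1,2,3\}$. The main technical point is the reverse inclusion $M\subseteq V_2$: tracking the leading coefficient of $v$ along the flag and using that $[\mathfrak{g}_{\bar 0},e_k]$ surjects onto $V_{k-1}/V_{k-2}$.
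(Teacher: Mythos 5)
Your arguments for parts (i) and (ii) are correct and essentially reproduce the paper's proof. In (i), your annihilator computation ($\mathfrak{z}(\mathfrak{g}_{\bar 0})=\varphi^{-1}\bigl(\mathrm{Ann}(V_{m-1})\bigr)$, hence exactly one-dimensional) is the dual formulation of the paper's count $\mathfrak{z}(\mathfrak{g}_{\bar 0})^{\perp}\supseteq\mathfrak{g}_{\bar 0}\oplus[\mathfrak{g}_{\bar 0},\mathfrak{g}_{\bar 1}]$, with the small bonus that it yields $\mathfrak{z}(\mathfrak{g}_{\bar 0})\neq\{0\}$ without invoking nilpotency; the reverse inclusion $\mathfrak{z}(\mathfrak{g}_{\bar 0})\subseteq\mathfrak{z}(\mathfrak{g})$ is the same invariance-plus-nondegeneracy argument as in the paper. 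In (ii), your leading-coefficient argument along the flag is the same idea as the paper's choice of $X_i$ with $[X_i,e_i]=\alpha_ie_{i-1}+\cdots$, $\alpha_i\neq 0$; your version is in fact slightly more careful, since it treats an arbitrary $v\in V_k\setminus V_{k-1}$ rather than only the coset representatives $e_k$.

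The one genuine flaw is your preliminary reduction to $m\geq 3$. The step that is supposed to kill $[\mathfrak{g}_{\bar 1},\mathfrak{g}_{\bar 1}]$ tests the \emph{even} element $[X,Y]$ against $Z\in\mathfrak{g}_{\bar 0}$; but $B$ is odd, so $B([X,Y],Z)=0$ holds for trivial reasons and carries no information. To conclude $[X,Y]=0$ you must pair against $\mathfrak{g}_{\bar 1}$, i.e.\ control $B([X,Y],W)=B(X,[Y,W])$ for $W\in\mathfrak{g}_{\bar 1}$, and since $[Y,W]\in\mathfrak{g}_{\bar 0}$ there is no reason for this to vanish. Indeed the claimed reduction is false: take $\mathfrak{g}_{\bar 0}=\mathrm{span}\{a,b\}$ acting trivially, $\mathfrak{g}_{\bar 1}=\mathrm{span}\{u,v\}$ with the single non-zero bracket $[u,u]=a$, and $B(a,u)=B(b,v)=1$, all other pairings zero. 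This is a non-abelian odd-quadratic Lie superalgebra in which $\mathfrak{g}_{\bar 1}$ is (trivially) a weak filiform module with $m=2$ --- and it even violates part (i), since $\dim\mathfrak{z}(\mathfrak{g})_{\bar 0}=2$. So the proposition is really a statement about $m\geq 3$, as the authors' notation $V_2=\mathbb{K}u_2\oplus\mathbb{K}v_2$, $V_3/V_2=\mathbb{K}e_3,\dots$ presupposes; your proof of that substantive case is sound, but the case $m=2$ has to be excluded by hypothesis rather than derived from non-abelianness as you propose.
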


\begin{proof}
i. Since by Proposition \ref{lemgo} ${\mathfrak g}_{\bar 0}$ is a nilpotent Lie algebra and ${\mathfrak g}_{\bar 0} \neq \{0\}$, then ${\mathfrak z}({\mathfrak g}_{\bar 0}) \neq \{0\}$. Therefore  $$B([{\mathfrak z}({\mathfrak g}_{\bar 0}),{\mathfrak g}_{\bar 1}],{\mathfrak g}_{\bar 0}) = B({\mathfrak g}_{\bar 1}, [{\mathfrak z}({\mathfrak g}_{\bar 0}),{\mathfrak g}_{\bar 0}]) = \{0\}$$ and we conclude $[{\mathfrak z}({\mathfrak g}_{\bar 0}), {\mathfrak g}_{\bar 1}]=\{0\}$ because $B$ is odd and non-degenerate. It follows that ${\mathfrak z}({\mathfrak g}_{\bar 0}) \subset {\mathfrak z}({\mathfrak g})$. Then $\{0\} \neq {\mathfrak z}({\mathfrak g}_{\bar 0}) = {\mathfrak z}({\mathfrak g}) \cap {\mathfrak g}_{\bar 0}$.

It is clear that ${\mathfrak g}_{\bar 0} \oplus [{\mathfrak g}_{\bar 0},{\mathfrak g}_{\bar 1}] \subset {\mathfrak z}({\mathfrak g}_{\bar 0})^{\perp}$ and $\mbox{\rm dim } \bigl({\mathfrak g}_{\bar 0} \oplus [{\mathfrak g}_{\bar 0}, {\mathfrak g}_{\bar 1}] \bigr) = \mbox{\rm dim } {\mathfrak g} -1$. Since ${\mathfrak z}({\mathfrak g}_{\bar 0}) \neq \{0\}$, then ${\mathfrak g}_{\bar 0} \oplus [{\mathfrak g}_{\bar 0},{\mathfrak g}_{\bar 1}] = {\mathfrak z}({\mathfrak g}_{\bar 0})^{\perp}$. Consequently, $\mbox{\rm dim }{\mathfrak z}({\mathfrak g}_{\bar 0}) = 1$.

ii. From the definition of weak filiform module we get $[{\mathfrak g}_{\bar 0},V_i]= V_{i-1}$ with $\mbox{\rm dim }V_i=i$, for $i\in \{2,\dots,m\}$. We set $V_2 := \mathbb{K}u_2 \oplus \mathbb{K}v_2$ and $V_i/V_{i-1}:=\mathbb{K}e_i$ for $i \in \{3,\dots,m\}$.  Therefore, $$[X,u_2] = [X,v_2]= 0, \mbox{ for all } X \in {\mathfrak g}_{\bar 0}.$$

Taking into account this expression together with $[{\mathfrak g}_{\bar 0},V_3]= V_2$ being $V_3 = \mbox{\rm span}_{\mathbb K}\{u_2,v_2,e_3\}$ allows us to assert, without losing generality, that there exist $X_3, X'_3 \in {\mathfrak g}_{\bar 0}$ such that $$[X_3,e_3] = \eta_{32} u_2 \mbox{ with } \eta_{32} \neq 0$$ 
and $$ [X'_3,e_3] = \lambda_{32}v_2 \mbox{ with } \lambda_{32} \neq 0.$$

Now, on account of above together with $[{\mathfrak g}_{\bar 0},V_4]= V_3$ being $V_4 = \mbox{\rm span}_{\mathbb K}\{u_2,v_2,e_3,e_4\}$ we conclude that there exists $X_4 \in {\mathfrak g}_{\bar 0}$ such that $$[X_4,e_4] = \alpha_4 e_3 + \eta_{42}u_2 + \lambda_{42}v_2, \mbox{ for some } \alpha_4 \neq 0, \ \eta_{42},\lambda_{42} \in \mathbb{K}.$$
Similarly, for all $i \in \{5,\dots,m\}$ we obtain that there exists $X_i \in {\mathfrak g}_{\bar 0}$, such that $$[X_i,e_i] = \alpha_i e_{i-1} + \sum_{j=3}^{i-2} \beta_{ij}e_j + \eta_{i2}u_2 + \lambda_{i2}v_2,  \mbox{ for some } \alpha_i \neq 0 \mbox{ and } \ \beta_{ij}, \eta_{i2}, \lambda_{i2} \in \mathbb{K}.$$

Consequently, we obtain   $[{\mathfrak g}_{\bar 0},e_i]\neq\{0\}$ for $i \in \{3,\dots,m\}$. Therefore, ${\mathfrak z}({\mathfrak g}) \cap {\mathfrak g}_{\bar 1} \subset V_2$, which concludes the proof.
\end{proof}

\begin{rem}    
By the proof of the previous proposition we have ${\mathfrak z}({\mathfrak g}_{\bar 0})^{\perp} = {\mathfrak g}_{\bar 0} \oplus [{\mathfrak g}_{\bar 0},{\mathfrak g}_{\bar 1}] $ with $  [{\mathfrak g}_{\bar 0},{\mathfrak g}_{\bar 1}] = V_{m-1}$. If we set ${\mathfrak z}({\mathfrak g}_{\bar 0}) = \mathbb{K}e$, we have $B(e,u_2) = B(e,v_2) = 0$, $B(e,e_i) = 0$ for $i \in \{3,\dots,m-1\}$, and we can assume  without loss of generality  that $B(e,e_m) = 1$. \end{rem}

\begin{rem}
We note that Proposition \ref{Lema 3.3}-ii can be established for any solvable Lie superalgebra ${\mathfrak g} = {\mathfrak g}_{\bar 0} \oplus {\mathfrak g}_{\bar 1}$ with ${\mathfrak g}_{\bar 1}$ a weak filiform ${\mathfrak g}_{\bar 0}$-module, even with no odd-quadratic structure, that is,  if ${\mathfrak z}({\mathfrak g})_{\bar{1}} =  {\mathfrak z}({\mathfrak g}) \cap {\mathfrak g}_{\bar 1} \neq \{0\}$ then ${\mathfrak z}({\mathfrak g})_{\bar{1}} ={\mathfrak z}({\mathfrak g}) \cap {\mathfrak g}_{\bar 1} \subset V_2$.
\end{rem}

\begin{lem}\label{lem2}
Let ${\mathfrak g} = {\mathfrak g}_{\bar 0}\oplus {\mathfrak g}_{\bar 1}$ be a solvable Lie superalgebra with $\mbox{\rm dim } {\mathfrak g}_{\bar 1} = m > 0$ such as ${\mathfrak g}_{\bar 1}$ is a weak filiform ${\mathfrak g}_{\bar 0}$-module with respect to the flag $${\mathfrak g}_{\bar 1} = V_m \supset \dots \supset V_2 \supset V_1 = \{0\},$$ and ${\mathfrak g}$ is equipped with an odd-invariant scalar product $B : {\mathfrak g} \times {\mathfrak g} \to \KK$ on ${\mathfrak g}$. Then $u_2, v_2 \notin ({\mathfrak g}_{\bar 0} \setminus [{\mathfrak g}_{\bar 0},{\mathfrak g}_{\bar 0}])^{\perp}$.
\end{lem}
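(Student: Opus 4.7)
The plan is to argue by contradiction, exploiting the odd‑invariance of $B$ together with the fact that elements of $V_2$ are annihilated by $\mathfrak{g}_{\bar 0}$.

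First, I would record the key identity coming from the flag: since $u_2 \in V_2$ and the weak filiform property forces $[\mathfrak{g}_{\bar 0}, V_2] = V_1 = \{0\}$, one has $[A, u_2] = 0$ for every $A \in \mathfrak{g}_{\bar 0}$. Invariance of $B$ then immediately gives, for all $A, B \in \mathfrak{g}_{\bar 0}$,
\[
B\bigl(u_2, [A,B]\bigr) \;=\; B\bigl([u_2, A], B\bigr) \;=\; 0,
\]
so $u_2 \in [\mathfrak{g}_{\bar 0}, \mathfrak{g}_{\bar 0}]^{\perp}$. The same computation works verbatim for $v_2$.

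Now I would suppose for contradiction that $u_2 \in (\mathfrak{g}_{\bar 0} \setminus [\mathfrak{g}_{\bar 0}, \mathfrak{g}_{\bar 0}])^{\perp}$. Since every element of $\mathfrak{g}_{\bar 0}$ belongs either to $[\mathfrak{g}_{\bar 0}, \mathfrak{g}_{\bar 0}]$ or to its set‑theoretic complement, and orthogonality is linear in the second slot, the previous step combined with this assumption yields $B(u_2, X) = 0$ for every $X \in \mathfrak{g}_{\bar 0}$, i.e., $u_2 \in \mathfrak{g}_{\bar 0}^{\perp}$. On the other hand, $B$ is odd, which forces $B(u_2, Y) = 0$ for every $Y \in \mathfrak{g}_{\bar 1}$. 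Together these give $u_2 \in \mathfrak{g}^{\perp}$, and by non‑degeneracy of the odd scalar product $B$ we would conclude $u_2 = 0$, contradicting that $u_2$ is a chosen basis element of $V_2$. The argument for $v_2$ is identical.

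There is no serious obstacle here; the whole proof is about five lines once the right observation is made. The only point that deserves care is the purely set‑theoretic remark that $\mathfrak{g}_{\bar 0} = [\mathfrak{g}_{\bar 0}, \mathfrak{g}_{\bar 0}] \cup \bigl(\mathfrak{g}_{\bar 0} \setminus [\mathfrak{g}_{\bar 0}, \mathfrak{g}_{\bar 0}]\bigr)$, so that orthogonality to both pieces suffices to conclude orthogonality to all of $\mathfrak{g}_{\bar 0}$; the solvability hypothesis ensures $[\mathfrak{g}_{\bar 0}, \mathfrak{g}_{\bar 0}] \subsetneq \mathfrak{g}_{\bar 0}$ so that the complement is non‑empty and the statement is non‑vacuous.
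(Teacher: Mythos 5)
Your proof is correct and follows essentially the same route as the paper's: both hinge on the observation that $[\mathfrak{g}_{\bar 0},u_2]=\{0\}$ together with invariance of $B$ yields $B(u_2,[\mathfrak{g}_{\bar 0},\mathfrak{g}_{\bar 0}])=\{0\}$, after which oddness and non-degeneracy of $B$ force a non-trivial pairing of $u_2$ (resp.\ $v_2$) with some element of $\mathfrak{g}_{\bar 0}\setminus[\mathfrak{g}_{\bar 0},\mathfrak{g}_{\bar 0}]$. The only difference is presentational: you run the final step as a contradiction, whereas the paper concludes directly.
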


\begin{proof}
Since $B$ is invariant and $[{\mathfrak g}_{\bar 0},u_2]=\{0\}$ we have $B({\mathfrak g}_{\bar 0}, [{\mathfrak g}_{\bar 0}, u_2])=B([{\mathfrak g}_{\bar 0}, {\mathfrak g}_{\bar 0}], u_2)=0$. Now, as $B$ is odd and non-degenerate there exists $X \in {\mathfrak g}_{\bar 0} \setminus [{\mathfrak g}_{\bar 0},{\mathfrak g}_{\bar 0}]$ such that  $B(X,u_2)\neq 0$. Analogous result  can be obtained for $v_2$, which concludes the proof.
\end{proof}

\begin{thm}\label{TEOREMA__3.2}
Let $\displaystyle ({\mathfrak g}={\mathfrak g}_{\bar 0} \oplus {\mathfrak g}_{\bar 1},B)$ be an odd-quadratic Lie superalgebra with $\mbox{\rm dim }{\mathfrak g}_{\bar 1} = m > 0$, such that ${\mathfrak g}_{\bar 1}$ is a weak filiform ${\mathfrak g}_{\bar 0}$-module with respect to the flag $${\mathfrak g}_{\bar 1} = V_m \supset \dots \supset V_2
\supset V_1 = \{0\}.$$ Consider $D$ an odd skew-supersymmetric superderivation of $({\mathfrak g},B)$, $X_0 \in {\mathfrak g}_{\bar 0}$, and $\lambda_0 \in \mathbb{K}$ such that
$$ D(X_0)=0, \hspace{0.8cm} D^2=\frac{1}{2}[X_0, \cdot ]_{\mathfrak g}, \hspace{0.8cm}  e_m \in D({\mathfrak g}_{\bar 0}). $$

\noindent Define a map $\Omega : (\mathbb{K}e)_{\bar 1} \to Der({\mathfrak g} \oplus P(\mathbb{K}e^{*}))$ by $\Omega (e) := \widetilde{D}$, where $\widetilde{D}: {\mathfrak g} \oplus P(\mathbb{K}e^{*}) \to {\mathfrak g} \oplus P(\mathbb{K}e^{*})$ satisfies $\widetilde{D}(e^{*})=0$ and 
$$\widetilde{D}(X) = D(X) - (-1)^{\bar i} B(X,X_0)e^*, \hspace{0.1cm} \mbox{\rm for all } X \in {\mathfrak g}_{\bar i}.$$

\noindent Consider the bilinear map $\zeta : \mathbb{K}e \times \mathbb{K}e \to {\mathfrak g} \oplus P(\mathbb{K}e^{*})$ defined by $\zeta (e,e):=X_0 + \lambda_0e^{*}$. Then 
${\mathfrak t} = \mathbb{K}e \oplus {\mathfrak g} \oplus P(\mathbb{K}e^{*})$ endowed with the even skew-symmetric bilinear map $[\cdot,\cdot]: {\mathfrak t} \times {\mathfrak t} \to {\mathfrak t}$ defined by 
\begin{align*}
& [e,e] = X_0+\lambda_0 e^{*}, \\
& [e,X] = D(X)-(-1)^{\bar i} B(X,X_0) e^*, \hspace{0.1cm} \mbox{\rm for all } X \in {\mathfrak g}_{\bar i},\\
& [X,Y] = [X,Y]_{\mathfrak g}+B(D(X),Y)e^*, \hspace{0.1cm} \mbox{\rm for all } X,Y \in {\mathfrak g},\\
& [e^*, {\mathfrak t}] = 0,
\end{align*}
\noindent is a Lie superalgebra. More precisely, $(\mathfrak{t},[\cdot,\cdot])$ is the generalized semi-direct product of ${\mathfrak g} \oplus P(\mathbb{K}e^{*})$ by the $1$-dimensional Lie superalgebra $(\mathbb{K}e)_{\bar 1}$ (by means of $\Omega$ and $\zeta$). Moreover, the supersymmetric bilinear form 
$\widetilde{B}: {\mathfrak t} \times {\mathfrak t} \to \mathbb{K}$ defined by 
\begin{align*}
& \widetilde{B}|_{\mathfrak g \times \mathfrak g}=B,\\
&\widetilde{B}(e,e^*)=1,\\
& \widetilde{B}(\mathfrak g, e)=\widetilde{B}(\mathfrak g,e^*)=\{0\},
\end{align*}
\noindent is an odd-invariant scalar product on ${\mathfrak t}$. In this case, we say that $({\mathfrak t}={\mathfrak t}_{\bar 0}\oplus {\mathfrak t}_{\bar 1}, \widetilde{B})$ is the generalized odd double extension of $(\mathfrak g, B)$, by the $1$-dimensional Lie superalgebra $(\mathbb{K}e)_{\bar 1}$ (by means of $D$, $X_0$, and $\lambda_0$). Furthermore, ${\mathfrak t}_{\bar 1}$ is a weak filiform ${\mathfrak t}_{\bar 0}$-module with respect to the flag $${\mathfrak t}_{\bar 1}=\widetilde V_{m+1} \supset \dots \supset \widetilde V_2
\supset \widetilde V_1 = \{0\}$$ being $\widetilde V_{m+1}:= \mathbb{K}e \oplus V_m$, $\widetilde V_i:=V_i,$ with $i \in \{1,\dots,m\}$. 
\end{thm}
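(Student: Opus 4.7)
The plan is to verify in sequence: (a) the operation $[\cdot,\cdot]$ is super-skew-symmetric and satisfies the super Jacobi identity, making $(\mathfrak{t},[\cdot,\cdot])$ a Lie superalgebra; (b) $\widetilde{B}$ is odd, supersymmetric, non-degenerate, and invariant; (c) the sequence $\widetilde{V}_{m+1}\supset\dots\supset\widetilde{V}_1=\{0\}$ is a weak filiform flag for the $\mathfrak{t}_{\bar 0}$-module $\mathfrak{t}_{\bar 1}$. Super-skew-symmetry is immediate once one notes that the summand $(-1)^{\bar i}B(X,X_0)e^*$ in $[e,X]$ automatically vanishes when $X\in\mathfrak{g}_{\bar 0}$ (since $B$ is odd and $X_0$ is even), restoring parity consistency between the two sides.

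For the super Jacobi identity I would split into cases by the number of entries equal to $e$. The case $(e,e,e)$ collapses to $3[e,[e,e]]=0$; explicit computation gives $[e,[e,e]]=D(X_0)-B(X_0,X_0)e^*=0$ because $D(X_0)=0$ and $B$ vanishes on even-even pairs. The case $(e,e,X)$ reduces to $2[e,[e,X]]+[X,[e,e]]=0$; after invoking $D^2=\tfrac{1}{2}\mathrm{ad}_{\mathfrak g}(X_0)$ and the fact that $B(D(X),X_0)=\pm B(X,D(X_0))=0$ (skew-supersymmetry of $D$ combined with $D(X_0)=0$), both the $\mathfrak g$-part and the $e^*$-part cancel. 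The case $(e,X,Y)$ separates into a $\mathfrak g$-component, which is precisely the superderivation identity $D([X,Y])=[D(X),Y]+(-1)^{\bar i}[X,D(Y)]$, and an $e^*$-component which, using invariance of $B$ and $D^2=\tfrac{1}{2}\mathrm{ad}_{\mathfrak g}(X_0)$, vanishes automatically by parity whenever $X,Y$ have equal parities and collapses to a sign cancellation when they have opposite parities. Finally the case $(X,Y,Z)$ with $X,Y,Z\in\mathfrak g$ splits into Jacobi in $\mathfrak g$ (the $\mathfrak g$-component) and the 2-cocycle identity for $\varphi(X,Y):=B(D(X),Y)e^*$ (the $e^*$-component), the latter a standard consequence of $D$ being a superderivation together with the invariance and supersymmetry of $B$.

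For $\widetilde{B}$, oddness, supersymmetry, and non-degeneracy are read off directly from the definition together with non-degeneracy of $B$ and the pairing $\widetilde{B}(e,e^*)=1$. Invariance breaks into cases; the nontrivial one has two entries in $\mathfrak g$ and one equal to $e$, where equating $\widetilde{B}([X,Y],e)=B(D(X),Y)$ with $\widetilde{B}(X,[Y,e])=-(-1)^{\bar j}B(X,D(Y))$ reproduces exactly the skew-supersymmetric condition on the odd derivation $D$.

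For the weak filiform structure of $\mathfrak{t}_{\bar 1}$, with $\widetilde{V}_i:=V_i$ for $i\in\{1,\dots,m\}$ and $\widetilde{V}_{m+1}:=\mathbb{K}e\oplus V_m$, the dimensions and the submodule property for $i\le m$ are inherited from $\mathfrak g_{\bar 1}$ once one observes that $[\mathbb{K}e^*,\mathfrak{t}]=0$, so the action of $\mathfrak{t}_{\bar 0}$ on $V_i\subset\mathfrak g_{\bar 1}$ coincides with that of $\mathfrak g_{\bar 0}$. The only nontrivial verification is $[\mathfrak{t}_{\bar 0},\widetilde{V}_{m+1}]=V_m$: this sum equals $[\mathfrak g_{\bar 0},V_m]+[\mathfrak g_{\bar 0},\mathbb{K}e]=V_{m-1}+D(\mathfrak g_{\bar 0})$, and using the hypothesis $e_m\in D(\mathfrak g_{\bar 0})$ together with $e_m\notin V_{m-1}$ we conclude $V_{m-1}+D(\mathfrak g_{\bar 0})=V_m$. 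The principal obstacle is the sign bookkeeping in the $(e,X,Y)$ Jacobi case, where the interplay of the superderivation property of $D$, the skew-supersymmetric identity on $B$, and the ``Hamiltonian-square'' hypothesis $D^2=\tfrac{1}{2}\mathrm{ad}_{\mathfrak g}(X_0)$ must align to annihilate the $e^*$-residue.
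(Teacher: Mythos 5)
Your proposal is correct, and for the genuinely new content of the theorem --- the weak filiform flag on $\mathfrak{t}_{\bar 1}$ --- your argument coincides with the paper's: both reduce the only nontrivial step to $[\mathfrak{t}_{\bar 0},\widetilde V_{m+1}]=V_{m-1}+D(\mathfrak{g}_{\bar 0})=V_m$ via the hypothesis $e_m\in D(\mathfrak{g}_{\bar 0})$ (together with $D(\mathfrak{g}_{\bar 0})\subset\mathfrak{g}_{\bar 1}=V_m$, since $D$ is odd). Where you diverge is in the Lie superalgebra axioms and the properties of $\widetilde B$: the paper does not verify these at all, but simply invokes the general odd double extension construction of Albuquerque--Barreiro--Benayadi (their Theorem 2.14), whereas you carry out the case-by-case Jacobi and invariance computations from scratch. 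Your sketch identifies exactly the right ingredients in each case ($D(X_0)=0$ and oddness of $B$ for $(e,e,e)$; $D^2=\tfrac12\,\mathrm{ad}_{\mathfrak g}(X_0)$ together with $B(D(X),X_0)=-(-1)^{\bar i}B(X,D(X_0))=0$ for $(e,e,X)$; the superderivation, skew-supersymmetry and cocycle identities elsewhere), so your route buys self-containedness at the cost of length. One small point worth making explicit in the weak filiform part: for $2\le j\le m$ the bracket of $X\in\mathfrak{g}_{\bar 0}$ with $V_j$ computed in $\mathfrak t$ carries the extra term $B(D(X),\cdot)\,e^*$, and one must observe that it vanishes because $D$ and $B$ are both odd, so $B(D(\mathfrak g_{\bar 0}),\mathfrak g_{\bar 1})=0$ (this is the one line the paper does spell out); your appeal to $[\mathbb{K}e^*,\mathfrak t]=0$ handles the action of $e^*$ but not this term, although its vanishing also follows from the evenness of the bracket that you checked at the outset.
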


\begin{proof}
Taking into account \cite[Theorem 2.14]{OddQuadraticLieSuperalgebras}, only rest to check the structure of weak filiform 
${\mathfrak t}_{\bar 0}$-module. Since ${\mathfrak g}_{\bar 1}$ is a weak filiform ${\mathfrak g}_{\bar 0}$-module with respect to the flag $${\mathfrak g}_{\bar 1}=V_m \supset \dots \supset V_2 \supset V_1=\{0\},$$ we have that  $[{\mathfrak g}_{\bar 0}, V_{j}]_{\mathfrak g}=V_{j-1}$ for all $j \in \{2,\dots,m\}$. On account of both $D$ and $B$ are odd we get  $B(D({\mathfrak g}_{\bar 0}),V_j))=0$ for $j \in \{2,\dots,m\}$. This fact together with 
${\mathfrak t}_{\bar 0}={\mathfrak g}_{\bar 0} \oplus P(\mathbb{K}e^{*})$, being $e^{*}$ a central element for ${\mathfrak t}$, that is, $[e^{*}, {\mathfrak t}]=0$,  allow us to assert 
$$[{\mathfrak t}_{\bar 0}, V_{j}]=[{\mathfrak g}_{\bar 0}, V_{j}]_{\mathfrak g}=V_{j-1}, \quad 2 \leq j \leq m.$$

Now, since $\widetilde V_i:=V_i$ for $i \in \{1,\dots,m\}$ we get  
$$[{\mathfrak t}_{\bar 0}, \widetilde V_{j}]=\widetilde V_{j-1}, \quad 1 \leq j \leq m.$$
Only rest to check $[{\mathfrak t}_{\bar 0}, \widetilde V_{m+1}]$. Note that $$[{\mathfrak t}_{\bar 0}, \widetilde V_{m+1}] = [{\mathfrak g}_{\bar 0}, \mathbb{K}e \oplus V_m]$$
with  $[{\mathfrak g}_{\bar 0}, V_{m}]=V_{m-1}$ and $[{\mathfrak g}_{\bar 0}, e]=-D({\mathfrak g}_{\bar 0})$. As $e_m \in D({\mathfrak g}_{\bar 0})$ then $$[{\mathfrak t}_{\bar 0}, \widetilde V_{m+1}] = V_{m-1} \oplus \mathbb{K}e_m = V_m = \widetilde V_m$$ which concludes the proof of the theorem.
\end{proof}

\section{Inductive description of odd-quadratic solvable Lie superalgebras of weak filiform type}

\begin{lem}\label{lem3}
Let $({\mathfrak g} = {\mathfrak g}_{\bar 0} \oplus {\mathfrak g}_{\bar 1},B)$ be an odd-quadratic  Lie superalgebra with $\mbox{\rm dim } {\mathfrak g}_{\bar 1} = m > 0$ such as ${\mathfrak g}_{\bar 1}$ has the structure of  weak filiform ${\mathfrak g}_{\bar 0}$-module with respect to the flag $${\mathfrak g}_{\bar 1}=V_m \supset \dots \supset V_2
\supset V_1=\{ 0 \}.$$ Then $e_m \notin \bigl({\mathfrak z}({\mathfrak g}\bigr)_{\bar 0})^{\perp}$.
\end{lem}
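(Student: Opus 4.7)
The plan is to reduce the statement almost immediately to Proposition \ref{Lema 3.3} and the remark that follows it. First I would verify that the hypotheses of Proposition \ref{Lema 3.3} apply: the weak filiform condition $[\mathfrak{g}_{\bar 0}, V_m] = V_{m-1}$ with $V_{m-1} \neq \{0\}$ (which is implicit, since the symbol $e_m$ only makes sense when $m \geq 3$ by the convention $V_i/V_{i-1} = \mathbb{K}e_i$) forces $\mathfrak{g}$ to be non-abelian. Consequently ${\mathfrak z}({\mathfrak g})_{\bar 0} = {\mathfrak z}({\mathfrak g}_{\bar 0})$ is one-dimensional; write ${\mathfrak z}({\mathfrak g})_{\bar 0} = \mathbb{K}e$.

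Next I would show $V_{m-1} \subset ({\mathfrak z}({\mathfrak g})_{\bar 0})^{\perp}$. This is a one-line invariance computation: since $e$ is central in $\mathfrak{g}_{\bar 0}$,
$$B(e, [\mathfrak{g}_{\bar 0}, \mathfrak{g}_{\bar 1}]) = B([e, \mathfrak{g}_{\bar 0}], \mathfrak{g}_{\bar 1}) = \{0\},$$
and $[\mathfrak{g}_{\bar 0}, \mathfrak{g}_{\bar 1}] = [\mathfrak{g}_{\bar 0}, V_m] = V_{m-1}$ from the flag.

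Then I would run a dimension count to see that this inclusion is tight inside $\mathfrak{g}_{\bar 1}$. Because $B$ is odd and non-degenerate, the restriction $B|_{\mathfrak{g}_{\bar 0} \times \mathfrak{g}_{\bar 1}}$ is a perfect pairing, so the annihilator of the one-dimensional space $\mathbb{K}e$ inside $\mathfrak{g}_{\bar 1}$ has dimension $m-1$. Combined with $\dim V_{m-1} = m-1$ and the inclusion of the previous step, this yields the equality $({\mathfrak z}({\mathfrak g})_{\bar 0})^{\perp} \cap \mathfrak{g}_{\bar 1} = V_{m-1}$.

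To conclude, since $e_m$ represents a non-zero class in $V_m/V_{m-1}$ by its very definition, $e_m \notin V_{m-1}$, hence $e_m \notin ({\mathfrak z}({\mathfrak g})_{\bar 0})^{\perp}$. I do not expect any serious obstacle here; the content of the lemma is essentially already present in the remark following Proposition \ref{Lema 3.3}, and the only real check is the codimension-one identification that makes the inclusion $V_{m-1} \subset ({\mathfrak z}({\mathfrak g})_{\bar 0})^{\perp} \cap \mathfrak{g}_{\bar 1}$ an equality.
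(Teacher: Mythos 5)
Your proof is correct and follows essentially the same route as the paper: both arguments show that $\mathfrak{z}(\mathfrak{g})_{\bar 0}$ is orthogonal to $V_{m-1}=[\mathfrak{g}_{\bar 0},\mathfrak{g}_{\bar 1}]$ using invariance and centrality, and then invoke non-degeneracy of the odd form to force $B(e,e_m)\neq 0$. Your subspace-level computation $B(e,[\mathfrak{g}_{\bar 0},\mathfrak{g}_{\bar 1}])=B([e,\mathfrak{g}_{\bar 0}],\mathfrak{g}_{\bar 1})=\{0\}$ is just a cleaner packaging of the paper's element-by-element argument with the auxiliary elements $t_i$.
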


\begin{proof}
Since ${\mathfrak g}_{\bar 1}$ has the structure of a weak filiform ${\mathfrak g}_{\bar 0}$-module with respect to the flag $${\mathfrak g}_{\bar 1}=V_m \supset \dots \supset V_2
\supset V_1=\{ 0 \},$$ being $ V_i/V_{i-1}:=\mathbb{K}e_i$, $i \in \{3,\dots,m\},$ and $V_2=span_{\mathbb{K}}\{ u_2,v_2\}$, we can suppose that for each $i \in \{4,\dots,m\}$ there exits $t_i \in {\mathfrak g}_{\bar 0}$ such that $$[t_i,e_i] = e_{i-1}.$$ Also we have $t_3$ and $t_2$ verifying $[t_3,e_3]=u_2$ and $[t_2,e_3]=v_2$. In general, such $t_i$ is a linear combination of the homogeneous basis selected for ${\mathfrak g}_{\bar 0}$ and it is not unique. Now, let $e^*$ be a non-zero element of ${\mathfrak z}({\mathfrak g})_{\bar 0},$ so $[e^*,t_i]=0$ and as $B$ is invariant we get $$0 = B([e^*,t_i],e_i)=B(e^*,[t_i,e_i])=B(e^*,e_{i-1}), \mbox{\rm  for } i \in \{4,\dots,m\}.$$ Moreover,  
$$0 = B([e^*,t_3],e_3)=B(e^*,[t_3,e_3])=B(e^*,u_2)$$
and 
$$0 = B([e^*,t_2],e_3)=B(e^*,[t_2,e_3])=B(e^*,v_2).$$
Finally, taking into account that $B$ is non-degenerate it follows that $B(e^*,e_m)\neq 0$ which concludes the proof of the lemma.
\end{proof}

\noindent Since for  any odd-quadratic Lie superalgebra $({\mathfrak g} = {\mathfrak g}_{\bar 0} \oplus {\mathfrak g}_{\bar 1}, B)$ with $\mbox{\rm dim }{\mathfrak g}_{\bar 1} = m > 0$  such that ${\mathfrak g}_{\bar 1}$ is a weak filiform ${\mathfrak g}_{\bar 0}$-module the even part of the center is non-zero, this is    ${\mathfrak z} ({\mathfrak g})_{\bar 0} \neq \{0\}$ (see Proposition \ref{Lema 3.3}i.), we can prove the converse of Theorem \ref{TEOREMA__3.2}.

\begin{thm} \label{generalized}
Let $({\mathfrak g} = {\mathfrak g}_{\bar 0} \oplus {\mathfrak g}_{\bar 1}, B)$ be an odd-quadratic Lie superalgebra with $\mbox{\rm dim }{\mathfrak g}_{\bar 1} = m > 0$ and $\mbox{\rm dim }{\mathfrak g} > 1$, such that ${\mathfrak g}_{\bar 1}$ is a weak filiform ${\mathfrak g}_{\bar 0}$-module with respect to the flag $${\mathfrak g}_{\bar 1} = V_m \supset \dots \supset V_2 \supset V_1=\{0\}.$$  Then $({\mathfrak g},B)$ is a generalized odd double extension of an odd-quadratic Lie superalgebra $({\mathfrak h}, \widetilde{B})$ (such that $\mbox{\rm dim }{\mathfrak h} = \mbox{\rm dim }{\mathfrak g}-2$) by the $1$-dimensional Lie superalgebra  $(\mathbb{K}e_m)_{\bar 1}$  being $V_m/V_{m-1} := \mathbb{K}e_m$. Furthermore, ${\mathfrak h}_{\bar 1}$ is a weak \textit{filiform ${\mathfrak h}_{\bar 0}$-module} with respect to the flag $${\mathfrak h}_{\bar 1} = V_{m-1} \supset \dots \supset V_2 \supset V_1=\{0\}.$$
\end{thm}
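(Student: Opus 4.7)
The plan is to reverse-engineer the construction in Theorem~\ref{TEOREMA__3.2}. By Proposition~\ref{Lema 3.3}(i) the space $\mathbb{K}e := {\mathfrak z}({\mathfrak g}_{\bar 0}) = {\mathfrak z}({\mathfrak g})\cap {\mathfrak g}_{\bar 0}$ is one-dimensional and central in the whole of ${\mathfrak g}$. I would pick a representative $e_m$ of $V_m/V_{m-1}$, invoke Lemma~\ref{lem3} to rescale so that $B(e,e_m)=1$, and observe that invariance together with the centrality of $e$ forces $B(e,V_{m-1})=0$. I would then set
\[
{\mathfrak h}_{\bar 0}:=\{X\in {\mathfrak g}_{\bar 0} : B(X,e_m)=0\},\qquad {\mathfrak h}_{\bar 1}:=V_{m-1},\qquad {\mathfrak h}:={\mathfrak h}_{\bar 0}\oplus {\mathfrak h}_{\bar 1},
\]
so that ${\mathfrak g}=\mathbb{K}e\oplus {\mathfrak h}\oplus \mathbb{K}e_m$ as a graded vector space; non-degeneracy of $\widetilde{B}:=B|_{{\mathfrak h}\times {\mathfrak h}}$ follows because the $B$-orthogonal of ${\mathfrak h}_{\bar 0}$ inside ${\mathfrak g}_{\bar 1}$ is exactly $\mathbb{K}e_m$, and symmetrically for ${\mathfrak h}_{\bar 1}$.

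The next step is to transfer the Lie structure. The key observation is that ${\mathfrak k}:={\mathfrak h}\oplus \mathbb{K}e$ is a graded subsuperalgebra of ${\mathfrak g}$: for homogeneous $X,Y\in {\mathfrak h}$ the only case in which a $\mathbb{K}e_m$-component could \emph{a priori} appear in $[X,Y]_{{\mathfrak g}}$ is $X\in {\mathfrak h}_{\bar 0}$, $Y\in V_{m-1}$, and the flag condition $[{\mathfrak g}_{\bar 0},V_{m-1}]=V_{m-2}\subset V_{m-1}$ rules it out. Since $\mathbb{K}e$ is a graded central ideal of ${\mathfrak k}$, the quotient ${\mathfrak k}/\mathbb{K}e$ induces a Lie superalgebra structure on ${\mathfrak h}$, explicitly $[X,Y]_{{\mathfrak h}}:=\pi_{{\mathfrak h}}([X,Y]_{{\mathfrak g}})$. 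Invariance of $\widetilde{B}$ then drops out of invariance of $B$ combined with the vanishings $B(e,{\mathfrak h})=B(e_m,{\mathfrak h})=0$, making $({\mathfrak h},\widetilde{B})$ odd-quadratic of dimension $\dim {\mathfrak g}-2$. Centrality of $e$ further yields $[{\mathfrak h}_{\bar 0},V_i]_{{\mathfrak h}}=[{\mathfrak g}_{\bar 0},V_i]_{{\mathfrak g}}=V_{i-1}$ for $2\le i\le m-1$, so that ${\mathfrak h}_{\bar 1}=V_{m-1}$ is a weak filiform ${\mathfrak h}_{\bar 0}$-module with the announced flag.

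For the extension data I would set $D(X):=\pi_{{\mathfrak h}}([e_m,X]_{{\mathfrak g}})$ and decompose $[e_m,e_m]_{{\mathfrak g}}=X_0+\lambda_0 e$ with $X_0\in {\mathfrak h}_{\bar 0}$ and $\lambda_0\in \mathbb{K}$. Because $[e_m,\cdot]_{{\mathfrak g}}$ is an odd superderivation of ${\mathfrak g}$ that preserves ${\mathfrak k}$ and annihilates $e$, it descends to an odd superderivation $D$ of ${\mathfrak h}$; skew-supersymmetry with respect to $\widetilde{B}$ should drop out of the invariance identity $B([e_m,X]_{{\mathfrak g}},Y)=B(e_m,[X,Y]_{{\mathfrak g}})$ combined with $B(e,{\mathfrak h})=0$. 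Super-Jacobi in ${\mathfrak g}$ applied to $(e_m,e_m,e_m)$ and to $(e_m,e_m,X)$, then projected to ${\mathfrak h}$ and combined with $[e,\cdot]=0$, should respectively give $D(X_0)=0$ and $D^2=\frac{1}{2}[X_0,\cdot]_{{\mathfrak h}}$. For the remaining hypothesis $e_{m-1}\in D({\mathfrak h}_{\bar 0})$, I would combine $[{\mathfrak g}_{\bar 0},e_m]_{{\mathfrak g}}+V_{m-2}=V_{m-1}$ with $[\mathbb{K}e,e_m]=0$ to derive $D({\mathfrak h}_{\bar 0})+V_{m-2}=V_{m-1}$ and then choose the representative $e_{m-1}$ of $V_{m-1}/V_{m-2}$ to lie inside $D({\mathfrak h}_{\bar 0})$. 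To match the bracket of ${\mathfrak g}$ with the one produced by Theorem~\ref{TEOREMA__3.2}, observe that for $X,Y\in {\mathfrak h}$ the $\mathbb{K}e$-coefficient of $[X,Y]_{{\mathfrak g}}$ equals $B([X,Y]_{{\mathfrak g}},e_m)=B(X,[Y,e_m]_{{\mathfrak g}})$, which should unwind to $B(D(X),Y)$; for $X\in {\mathfrak h}_{\bar 1}$, the $\mathbb{K}e$-coefficient of $[e_m,X]_{{\mathfrak g}}$ is $B(e_m,[e_m,X]_{{\mathfrak g}})=B([e_m,e_m]_{{\mathfrak g}},X)=B(X,X_0)$. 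These are precisely the formulas of Theorem~\ref{TEOREMA__3.2} under the identification $e^*\leftrightarrow e$. The main subtlety, in my view, is the clean control of the components of the ${\mathfrak g}$-bracket lying outside ${\mathfrak h}$: only a central $\mathbb{K}e$-piece can occur and never a $\mathbb{K}e_m$-piece, a fact that hinges crucially on the flag condition together with the promotion ${\mathfrak z}({\mathfrak g}_{\bar 0})\subset {\mathfrak z}({\mathfrak g})$ supplied by Proposition~\ref{Lema 3.3}.
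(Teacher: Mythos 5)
Your proposal is correct and follows essentially the same route as the paper: it singles out the central even element $e\in{\mathfrak z}({\mathfrak g})_{\bar 0}$ and the element $e_m$ with $B(e,e_m)=1$ (via Lemma \ref{lem3}), takes ${\mathfrak h}$ to be exactly the paper's $(\mathbb{K}e\oplus\mathbb{K}e_m)^{\perp}$ (your explicit description ${\mathfrak h}_{\bar 0}\oplus V_{m-1}$ coincides with it), and reverse-engineers $D$, $X_0$, $\lambda_0$ from the ${\mathfrak g}$-bracket just as in the paper's Claims 1 and 2. The extra detail you supply (the quotient-by-$\mathbb{K}e$ framing and the explicit $e$-coefficient computations) is what the paper delegates to the cited Proposition 2.15 of \cite{OddQuadraticLieSuperalgebras}, so the substance is the same.
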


\begin{proof}
Let $e^*$ be a non-zero element of $\mathfrak z(\mathfrak g) _{\bar{0}}$ and denote $I:=\mathbb{K} e^{*}$. As $B$ is odd we have ${\mathfrak g}_{\bar 0} \subset I^{\perp}$. By using Lemma \ref{lem3} we get $B(e^*,e_m) \neq 0$ and $B(e^*,u_2)=B(e^*,v_2)=0$ and $B(e^*,e_i)=0$ for $i \in \{3,\dots,m-1\}$. Therefore ${\mathfrak g}=I^{\perp} \oplus \mathbb{K} e_m$ and we may assume that $B(e^*,e_m)=1$. Considering $A:=\mathbb{K} e^{*} \oplus \mathbb{K} e_m$, ${\mathfrak h} := A^{\perp}$ with respect to $B$ and $\widetilde{B}=B|_{{\mathfrak h} \times {\mathfrak h}},$ and following the proof of \cite[Proposition 2.15]{OddQuadraticLieSuperalgebras}, we have
\begin{align*}
&[X,Y] = \alpha(X,Y) + \varphi(X,Y)e^*, \mbox{\rm for }X,Y \in {\mathfrak h}, \mbox{\rm with } \alpha(X,Y) \in {\mathfrak h}, \varphi(X,Y) \in \mathbb{K}.\\
&[e_m,X]=D(X)+\psi(X)e^*, \mbox{\rm for any } X \in {\mathfrak h}, \mbox{\rm where } D(X) \in {\mathfrak h}, \psi(X) \in \mathbb{K}.
\end{align*}

\noindent As $e_m \in {\mathfrak g}_{\bar 1}$ then $[e_m,e_m]$ is not necessarily zero and we can write
\begin{align*}
&[e_m,e_m]=X_0+\lambda_0 e^*, \mbox{\rm being } X_0 \in {\mathfrak g}_{\bar 0}, \lambda_0 \in \mathbb{K}.  \end{align*}

\noindent {\bf Claim 1.} The triple $({\mathfrak h}, \alpha=[\cdot,\cdot]_{\mathfrak h},\widetilde{B})$ is an odd-quadratic Lie superalgebra with ${\mathfrak h}_{\bar 1}$ a weak filiform ${\mathfrak h}_{\bar 0}$-module with respect to the flag ${\mathfrak h}_{\bar 1} = V_{m-1} \supset \dots \supset V_2 \supset V_1=\{ 0\}.$

\

\begin{dem}
The fact that $({\mathfrak h}, \alpha=[\cdot,\cdot]_{\mathfrak h},\widetilde{B})$ is an odd-quadratic Lie superalgebra follows from the proof of \cite[Proposition 2.15]{OddQuadraticLieSuperalgebras}. The structure of  weak filiform ${\mathfrak h}_{\bar 0}$-module on ${\mathfrak h}_{\bar 1}$ is naturally inherited from the structure of weak filiform ${\mathfrak g}_{\bar 0}$-module on ${\mathfrak g}_{\bar 1}$. 
\end{dem}

\

\noindent {\bf Claim 2.} $D$ is an odd skew-supersymmetric superderivation of $({\mathfrak h}, \widetilde{B})$ verifying 
$$ D(X_0)=0, \hspace{0.8cm} D^2=\frac{1}{2}[X_0, \cdot ]_{\mathfrak h}, \hspace{0.8cm}  e_{m-1} \in D({\mathfrak g}_{\bar 0}), $$ being $ V_{m-1}/V_{m-2}:=\mathbb{K}e_{m-1}$.

\

\begin{dem2}
On account of  \cite{OddQuadraticLieSuperalgebras} only remains to check that $e_{m-1} \in D({\mathfrak g}_{\bar 0})$  being $V_{m-1}/V_{m-2}:=\mathbb{K}e_{m-1}$. Since ${\mathfrak g}_{\bar 1}$ has the structure of a weak filiform ${\mathfrak g}_{\bar 0}$-module with respect to the flag $${\mathfrak g}_{\bar 1}=V_m \supset \dots \supset V_2
\supset V_1=\{0\},$$ being $ V_i/V_{i-1}:=\mathbb{K}e_i$ for $i \in \{3,\dots,m\}$ and $V_2=span_{\mathbb{K}}\{u_2,v_2\}$, we can suppose  that for each $i \in \{4,\dots,m\}$ there exist $t_i \in {\mathfrak g}_{\bar 0}$ such that $[ t_i,e_i]=e_{i-1}$. In particular, we get $t_m$ such that $[t_m, e_m] = e_{m-1}$. This fact together with $[e_m,t_m]=D(t_m)$ leads to $e_{m-1} = D(t_m) \in D({\mathfrak g}_{\bar 0})$. 
\end{dem2}
\end{proof}
\
 
\noindent The following result gives us the main result of the paper, that is, a full description of all odd-quadratic Lie superalgebras of weak filiform type using generalizations of double extensions and/or orthogonal direct sums.

\begin{cor}
An odd-quadratic Lie superalgebra of weak filiform type is obtained from a $6$-dimensional weak filiform superalgebra $\{ X_1,X_2,X_3,e_3,v_2,u_2 \}$ by a sequence of   generalized odd double extensions by  $1$-dimensional Lie superalgebras and/or direct sums of them.
\end{cor}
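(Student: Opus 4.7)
The plan is a straightforward induction on $\dim \mathfrak{g}$, anchored at the 6-dimensional base case identified in Section 4. Since $({\mathfrak g},B)$ is odd-quadratic, Proposition \ref{Prop 2.1} forces $\dim{\mathfrak g}_{\bar 0}=\dim{\mathfrak g}_{\bar 1}=m$, and hence $\dim \mathfrak{g}=2m$. The first genuinely interesting weak filiform dimension is $m=3$, where the flag $V_3\supset V_2\supset V_1=\{0\}$ has $V_2=\mathbb{K}u_2\oplus \mathbb{K}v_2$ and $V_3/V_2=\mathbb{K}e_3$; this gives the 6-dimensional base superalgebra with ordered basis $\{X_1,X_2,X_3,e_3,v_2,u_2\}$ that serves as the starting point.

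For the inductive step, I would assume the claim for all such Lie superalgebras with $\dim{\mathfrak g}_{\bar 1}<m$ and treat $({\mathfrak g},B)$ with $m\geq 4$. Theorem \ref{generalized} immediately provides an odd-quadratic Lie superalgebra $({\mathfrak h},\widetilde{B})$ of weak filiform type with $\dim{\mathfrak h}_{\bar 1}=m-1\geq 3$, such that $({\mathfrak g},B)$ is the generalized odd double extension of $({\mathfrak h},\widetilde{B})$ by the one-dimensional odd Lie superalgebra $(\mathbb{K}e_m)_{\bar 1}$. By the inductive hypothesis, $({\mathfrak h},\widetilde{B})$ is already obtained from the 6-dimensional base via the required sequence; appending one further generalized odd double extension produces $({\mathfrak g},B)$. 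This direct invocation of Theorem \ref{generalized} is what makes the induction mechanical once the base case is granted.

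The ``direct sums'' clause in the statement is included to accommodate $B$-reducibility: if at any stage the odd-quadratic Lie superalgebra splits orthogonally as $I\oplus I^{\perp}$ with both summands non-degenerate graded ideals, one handles each summand separately and recombines. The main subtlety I expect is verifying that each orthogonal summand inherits a weak filiform structure on its odd part with a coherent restricted flag. This is heavily constrained by Proposition \ref{Lema 3.3}: since $\dim{\mathfrak z}({\mathfrak g}_{\bar 0})=1$ in the non-abelian case, a non-trivial orthogonal decomposition with two non-abelian summands would produce a two-dimensional even centre, forcing one summand to be essentially abelian and the other to carry the whole weak filiform structure. This observation keeps the B-reducible bookkeeping under control and shows that direct sums are only needed to glue in abelian pieces. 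Combining the two moves — generalized odd double extension by $1$-dimensional Lie superalgebras and orthogonal direct sums — starting from the 6-dimensional base then exhausts all cases and completes the proof.
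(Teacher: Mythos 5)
Your proposal is correct and is essentially the argument the paper intends: the corollary is an immediate induction on $\dim\mathfrak{g}_{\bar 1}$, anchored at $m=3$, with the inductive step supplied verbatim by Theorem \ref{generalized}, which strips off one generalized odd double extension and leaves a weak filiform odd-quadratic superalgebra of dimension two less. Your extra discussion of the direct-sum clause is harmless but not needed for the descent, since Theorem \ref{generalized} applies to the whole algebra without any $B$-irreducibility hypothesis (and indeed the constraint $\dim\mathfrak{z}(\mathfrak{g})_{\bar 0}=1$ from Proposition \ref{Lema 3.3} makes nontrivial orthogonal splittings essentially impossible in the non-abelian case).
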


To complete the study we classify all the complex odd-quadratic Lie superalgebras of weak filiform type for the smallest possible dimensions which makes sense, that is, $\mbox{{\rm dim} }{\mathfrak g}_{\bar 0} = \mbox{{\rm dim} }{\mathfrak g}_{\bar 1}=3$ and $\mbox{{\rm dim} }{\mathfrak g}_{\bar 0}= \mbox{{\rm dim} }{\mathfrak g}_{\bar 1}=4$. Then, we have the following result.

\begin{thm}
If $\displaystyle ({\mathfrak g} = {\mathfrak g}_{\bar 0} \oplus {\mathfrak g}_{\bar 1}, B)$ is a $6$-dimensional complex odd-quadratic Lie superalgebra of weak filiform type, then ${\mathfrak g}$ is isomorphic either ${\mathfrak g}^0_6$ or ${\mathfrak g}^1_6$, being ${\mathfrak g}^{\delta} \  (\delta \in \{ 0,1\})$ the family of Lie superalgebras which can be expressed in a suitable basis $\{X_1,X_2,X_3,e_3,u_2,v_2\}$ by the following non-null bracket products

$${\mathfrak g}^{\delta}_6 :
\left\{ \begin{array}{ll}
    [X_1,X_2]=X_3, &  \\{}
    [X_1,e_3]=u_2, & \\{}
    [X_2,e_3]=v_2, & \\{}
    [e_3,e_3]=\delta X_3, & \delta \in \{ 0,1\},
\end{array}\right.$$
where $\{X_1,X_2,X_3\}$ are even basis vectors and $\{e_3,u_2,v_2\}$ odd ones. All the above bracket products are skew-symmetric except $[e_3,e_3] $ which is symmetric. Moreover, for both cases, ${\mathfrak g}^0_6$ and ${\mathfrak g}^1_6$,  the only possible non-null values of $B$ are determined by:
$$\begin{array}{ll}
B(X_1,v_2)=\lambda, & B(X_1,e_3)=\alpha, \\
B(X_2,u_2)=-\lambda, & B(X_2,e_3)=\beta, \\
B(X_3,e_3)=\lambda, &  \\
\end{array}$$
with $\lambda \neq 0$  and $\alpha, \ \beta \in \mathbb{C}$.
\end{thm}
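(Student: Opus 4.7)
The plan is to first pin down the Lie superalgebra structure and then parametrize all admissible odd-invariant scalar products. Since $\mbox{\rm dim } {\mathfrak g}_{\bar 0} = \mbox{\rm dim } {\mathfrak g}_{\bar 1} = 3$ by Proposition~\ref{Prop 2.1}, and ${\mathfrak g}_{\bar 0}$ is nilpotent with a one-dimensional center by Propositions~\ref{lemgo} and~\ref{Lema 3.3}(i), ${\mathfrak g}_{\bar 0}$ must be the three-dimensional Heisenberg algebra; pick a basis $\{X_1, X_2, X_3\}$ with $[X_1, X_2] = X_3$ and ${\mathfrak z}({\mathfrak g}_{\bar 0}) = \mathbb{K}X_3$. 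The weak filiform flag ${\mathfrak g}_{\bar 1} = V_3 \supset V_2 \supset V_1 = \{0\}$ provides odd basis vectors $e_3, u_2, v_2$, where $V_2 = \mathbb{K}u_2 \oplus \mathbb{K}v_2$ is annihilated by ${\mathfrak g}_{\bar 0}$. Since $X_3 \in {\mathfrak z}({\mathfrak g})$ by Proposition~\ref{Lema 3.3}(i), the surjection ${\mathfrak g}_{\bar 0} \to V_2$, $X \mapsto [X, e_3]$, has kernel $\mathbb{K}X_3$ and induces an isomorphism ${\mathfrak g}_{\bar 0}/\mathbb{K}X_3 \to V_2$; after replacing $u_2, v_2$ by $[X_1, e_3]$ and $[X_2, e_3]$ we may assume $[X_1, e_3] = u_2$ and $[X_2, e_3] = v_2$.

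Next I would determine the symmetric pairing $S^2 {\mathfrak g}_{\bar 1} \to {\mathfrak g}_{\bar 0}$. For any $Y, Z \in {\mathfrak g}_{\bar 1}$, invariance gives $B([u_2, Y], Z) = B(u_2, [Y, Z])$, and the right-hand side vanishes because $[Y, Z] \in {\mathfrak g}_{\bar 0}$ and $B$ is odd; non-degeneracy of $B$ restricted to ${\mathfrak g}_{\bar 0} \times {\mathfrak g}_{\bar 1}$ then forces $[u_2, Y] = 0$, and the analogous argument applied to $v_2$, $[e_3, u_2]$ and $[e_3, v_2]$ eliminates every pairing except $[e_3, e_3] \in {\mathfrak g}_{\bar 0}$. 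The super-Jacobi identity applied to $(e_3, e_3, e_3)$ yields $[[e_3, e_3], e_3] = 0$, and writing $[e_3, e_3] = \alpha_1 X_1 + \alpha_2 X_2 + \alpha_3 X_3$ and bracketing with $e_3$ gives $\alpha_1 u_2 + \alpha_2 v_2 = 0$, whence $[e_3, e_3] = \delta X_3$ for some $\delta \in \mathbb{K}$. The rescaling $(e_3, u_2, v_2) \mapsto (\mu e_3, \mu u_2, \mu v_2)$ preserves the other brackets and sends $\delta$ to $\mu^2 \delta$, so over $\mathbb{C}$ we may normalize $\delta \in \{0, 1\}$; the two resulting algebras are non-isomorphic because $[{\mathfrak g}_{\bar 1}, {\mathfrak g}_{\bar 1}] \neq \{0\}$ only in the case $\delta = 1$.

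Finally, I would parametrize $B$ by exploiting invariance systematically. Since $B$ is odd and supersymmetric it is determined by the nine values $B(X_i, Y_j)$ with $Y_j \in \{e_3, u_2, v_2\}$. Invariance applied to $([X_1, X_2], u_2)$, $([X_1, X_2], v_2)$ and $([X_1, X_2], e_3)$ gives $B(X_3, u_2) = B(X_3, v_2) = 0$ and $B(X_3, e_3) = B(X_1, v_2) =: \lambda$; invariance applied to $([X_1, e_3], X_1)$ and $([X_2, e_3], X_2)$ forces $B(X_1, u_2) = 0$ and $B(X_2, v_2) = 0$; invariance on $([X_2, e_3], X_1)$ gives $B(X_2, u_2) = -\lambda$. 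No relation is produced for $\alpha := B(X_1, e_3)$ and $\beta := B(X_2, e_3)$, which remain free. The main technical obstacle is checking that no hidden constraint arises from invariance involving the bracket $[e_3, e_3]$ in the case $\delta = 1$, but those identities reduce to equalities of the form $B({\mathfrak g}_{\bar 0}, {\mathfrak g}_{\bar 0}) = B({\mathfrak g}_{\bar 1}, {\mathfrak g}_{\bar 1}) = 0$ that already hold by oddness of $B$. Non-degeneracy of the resulting $3 \times 3$ pairing matrix between ${\mathfrak g}_{\bar 0}$ and ${\mathfrak g}_{\bar 1}$ is equivalent to its determinant $\lambda^3$ being non-zero, i.e. $\lambda \neq 0$, which completes the classification.
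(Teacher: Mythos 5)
There is a genuine gap in the step where you eliminate the products $[u_2,Y]$, $[v_2,Y]$ for $Y\in{\mathfrak g}_{\bar 1}$. You write $B([u_2,Y],Z)=B(u_2,[Y,Z])$ and claim the right-hand side ``vanishes because $[Y,Z]\in{\mathfrak g}_{\bar 0}$ and $B$ is odd''. This inverts the meaning of oddness: an odd form vanishes on ${\mathfrak g}_{\bar 0}\times{\mathfrak g}_{\bar 0}$ and on ${\mathfrak g}_{\bar 1}\times{\mathfrak g}_{\bar 1}$, whereas the pairing ${\mathfrak g}_{\bar 1}\times{\mathfrak g}_{\bar 0}$ is precisely where all the non-degeneracy sits. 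Indeed, in the very algebras you arrive at one has $B(u_2,X_2)=-\lambda\neq 0$, so $B(u_2,\cdot)$ does not annihilate ${\mathfrak g}_{\bar 0}$, and the deduction $[V_2,{\mathfrak g}_{\bar 1}]=\{0\}$ loses its justification. Invariance of $B$ alone cannot rescue it: testing $[u_2,e_3]$ against $u_2$, say, yields $B([u_2,e_3],u_2)=B(u_2,[e_3,u_2])$, a tautology in the unknown bracket. (The same parity confusion reappears, harmlessly, in your final remark: the invariance identities involving $[e_3,e_3]=X_3$ are automatic, but some of them reduce to the supersymmetry $B(X_3,e_3)=B(e_3,X_3)$ rather than to oddness.)

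The conclusion $[V_2,{\mathfrak g}_{\bar 1}]=\{0\}$ is correct, and the paper obtains it from the super Jacobi identity rather than from $B$: once $[e_3,e_3]=cX_3$ is known (your $(e_3,e_3,e_3)$ computation is fine), the identity for $(X_1,e_3,e_3)$ gives $2[u_2,e_3]=[X_1,[e_3,e_3]]=0$ since $X_3$ is central; then $(X_1,e_3,u_2)$ gives $[u_2,u_2]=[X_1,[e_3,u_2]]=0$, and the triples $(X_2,e_3,e_3)$, $(X_1,e_3,v_2)$, $(X_2,e_3,v_2)$ dispose of $[v_2,e_3]$, $[u_2,v_2]$, $[v_2,v_2]$, using $[{\mathfrak g}_{\bar 0},V_2]=\{0\}$ throughout. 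Note the order matters: $[u_2,e_3]=0$ must be established before $[u_2,u_2]$. If you substitute these Jacobi computations for your invariance argument, the rest of your proof (identification of ${\mathfrak g}_{\bar 0}$ as the Heisenberg algebra, normalisation of $\delta$ by rescaling, the non-isomorphy of the two cases, and the determination of $B$ with determinant $\lambda^3$) agrees with the paper's and is correct.
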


\begin{proof}
 Let $\displaystyle ({\mathfrak g} = {\mathfrak g}_{\bar 0} \oplus {\mathfrak g}_{\bar 1}, B)$ be a complex odd-quadratic Lie superalgebra with ${\rm dim}({\mathfrak g}_{\bar 0})= {\rm dim} ({\mathfrak g}_{\bar 1})=3$, such that ${\mathfrak g}_{\bar 1}$ is a weak filiform ${\mathfrak g}_{\bar 0}$-module with respect to the flag $${\mathfrak g}_{\bar 1} = V_3 \supset  V_2 \supset V_1=\{0\}.$$
 
We set  ${V}_3/ {V}_{2} := \mathbb{C}e_3$ and   ${V}_2 =span_{ \mathbb{C}} \{u_2,v_2 \}$. From Proposition \ref{lemgo}, on the other hand, we have that ${\mathfrak g}_{\bar 0}$ is nilpotent  and Proposition \ref{Lema 3.3}  leads to $\mbox{\rm dim }{\mathfrak z}({\mathfrak g}_{\bar 0})=1$. Since the only non-abelian $3$-dimensional nilpotent lie algebra is defined by $[X_1,X_2]=X_3$ we take this as our ${\mathfrak g}_{\bar 0}$. Moreover, $X_3$ is not only a central element for ${\mathfrak g}_{\bar 0}$ but for the whole Lie superalgebra ${\mathfrak g} = {\mathfrak g}_{\bar 0} \oplus {\mathfrak g}_{\bar 1}$ (see Proposition \ref{Lema 3.3}). This latter fact implies, in particular, that $[X_3,V_3]=\{ 0\}$. 

On account of ${\mathfrak g}_{\bar 1}$ is a weak filiform ${\mathfrak g}_{\bar 0}$-module we have that $[{\mathfrak g}_{\bar 0}, V_3]=V_2$ and $[{\mathfrak g}_{\bar 0}, V_2]=\{ 0\}$. Therefore, there is no loss of generality in supposing $$ [X_1,e_3]=u_2 \qquad  {\rm and} \qquad
    [X_2,e_3]=v_2. $$ 
    
Now, for having totally described the multiplication table of the Lie superalgebra ${\mathfrak g} = {\mathfrak g}_{\bar 0} \oplus {\mathfrak g}_{\bar 1} $ only rest to determine the symmetric  bracket products  $[{\mathfrak g}_{\bar 1}, {\mathfrak g}_{\bar 1}]$, i.e.:
$$[e_3,e_3], \ [e_3,u_2], \ [e_3,v_2], \ [u_2,u_2], \ [u_2,v_2], \ [v_2,v_2].$$

Firstly, we set $[e_3,e_3]=aX_1+bX_2+cX_3$. After,  by applying the super Jacobi identity $$[x,[y,z]]= [[x,y],z]- (-1)^{|y| |z|}  [[x,z],y]$$
for the  triple $\{x,y,z\}$  we get the  following constrains given
 in the table:
 \begin{center}
 	\begin{tabular}{l|l}
 		
 		Super Jacobi Identity & \quad Constrain\\
 		\hline \hline
 			$\{e_3,e_3,e_3\}$ & \quad $a=b=0$ \\
 			$\{X_1,e_3,e_3\}$ & \quad $[u_2,e_3]=0$ \\
 			$\{X_2,e_3,e_3\}$ & \quad $[v_2,e_3]=0$ \\
 			$\{X_1,e_3,u_2\}$ & \quad $[u_2,u_2]=0$ \\
 			$\{X_2,e_3,v_2\}$ & \quad $[v_2,v_2]=0$ \\
 			$\{X_1,e_3,v_2\}$ & \quad $[u_2,v_2]=0$ \\
 			\end{tabular}
 \end{center}
remaining only $[e_3,e_3]=cX_3$. Let us remark that the cases $c=0$ and $c \neq 0$ are clearly non-isomorphic. In fact, the first case give us a  Lie algebra (more concretely a $\mathbb{Z}_2$-graded Lie algebra) and the second case leads,  for any $c \neq 0$, to a Lie superalgebra which is not a Lie algebra because it contains non-null symmetric bracket products. In this latter case, the isomorphism (change of scale) defined by $$X'_i=X_i, \ 1 \leq i \leq 3, \ e'_3=\frac{1}{\sqrt{c}}e_3, \ u'_2=\frac{1}{\sqrt{c}}u_2, \ v'_2=\frac{1}{\sqrt{c}}v_2$$
allows to assert that $c$ can be supposed $c=1$, obtaining then ${\mathfrak g}^0_6$ and ${\mathfrak g}^1_6$ of the statement of the Theorem. 

Finally, we study the bilinear form $B$. Since $B$ is odd and supersymmetrical ($B(X,Y)=B(Y,X)$) it will be totally determined by the following values:
$$\begin{array}{lll}
B(X_1,e_3), & B(X_1,u_2),& B(X_1,v_2), \\
B(X_2,e_3), & B(X_2,u_2),& B(X_2,v_2), \\
B(X_3,e_3), & B(X_3,u_2),& B(X_3,v_2). \\
\end{array}$$

By applying the invariant condition $B([x,y],z)=B(x,[y,z])$
for the  ordered triple $\{x,y,z\}$  we get the  following relationships given
 in the table:
 \begin{center}
 	\begin{tabular}{l|l}
 		
 		Invariant Condition & \quad Relationship\\
 		\hline \hline
 			$\{X_1,X_1,e_3\}$ & \quad $B(X_1,u_2)=0$ \\
 			$\{X_2,X_2,e_3\}$ & \quad $B(X_2,v_2)=0$ \\
 			$\{X_3,X_1,e_3\}$ & \quad $B(X_3,u_2)=0$ \\
 			$\{X_3,X_2,e_3\}$ & \quad $B(X_3,v_2)=0$ \\
 			$\{X_1,X_2,e_3\}$ & \quad $B(X_3,e_3)=B(X_1,v_2)$ \\
 			$\{X_2,X_1,e_3\}$ & \quad $B(X_3,e_3)=-B(X_2,u_2)$ \\
 			\end{tabular}
 \end{center}
Since $B$ is non-degenerate we get $B(X_3,e_3)\neq 0$ and after renaming  $B(X_3,e_3)= \lambda$, $B(x_1,e_3)=\alpha$ and $B(x_2,e_3)=\beta$ we obtain the expression of the statement of the Theorem, which concludes the proof.
\end{proof}

Next, and previous to  study the case ${\rm dim}({\mathfrak g}_{\bar 0})= {\rm dim} ({\mathfrak g}_{\bar 1})=4$ we show a lemma which will be useful for the next classification Theorem.

\begin{lem}\label{descending}
Let ${\mathfrak g} = {\mathfrak g}_{\bar 0} \oplus {\mathfrak g}_{\bar 1}$ be a Lie superalgebra such that ${\mathfrak g}_{\bar 1}$ is a weak filiform ${\mathfrak g}_{\bar 0}$-module with respect to the flag $${\mathfrak g}_{\bar 1} = V_m \supset \dots \supset V_2 \supset V_1=\{0\}.$$
$[{\mathfrak g}_{\bar 0},V_i]=V_{i-1}$ $2 \leq i \leq m$. If we denote by $\mathcal{C}^{i}{\mathfrak g}_{\bar 0}$ the descending central sequence of the Lie algebra ${\mathfrak g}_{\bar 0}$, i.e. $\mathcal{C}^{0}{\mathfrak g}_{\bar 0}:={\mathfrak g}_{\bar 0}$ and $\mathcal{C}^{i}{\mathfrak g}_{\bar 0}:=[\mathcal{C}^{i-1}{\mathfrak g}_{\bar 0}, {\mathfrak g}_{\bar 0}]$ with $i \geq 1$, then it is verified that $$[\mathcal{C}^{j}{\mathfrak g}_{\bar 0},V_i]\subset V_{i-j-1}, \quad \mbox{ for all } i\leq m, \ j \geq 1$$
being $V_n:=\{0\}$ for all $n \leq 1$.
\end{lem}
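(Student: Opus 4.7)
The plan is to proceed by induction on $j \geq 1$, using repeatedly the Jacobi identity on ${\mathfrak g}_{\bar 0}$ acting on the flag $V_m \supset \dots \supset V_1 = \{0\}$. Since both $\mathcal{C}^j {\mathfrak g}_{\bar 0}$ and the filtration $V_\bullet$ are defined inductively, each bracket $[Z,Y]$ generating $\mathcal{C}^j {\mathfrak g}_{\bar 0}$ (with $Z \in \mathcal{C}^{j-1}{\mathfrak g}_{\bar 0}$ and $Y \in {\mathfrak g}_{\bar 0}$) can be pushed onto a vector $v \in V_i$ by Jacobi, replacing a one-step drop in the flag by a one-step drop in $\mathcal{C}^{\bullet}{\mathfrak g}_{\bar 0}$. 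The assumption $[{\mathfrak g}_{\bar 0},V_i] = V_{i-1}$ (in particular, the inclusion $\subset V_{i-1}$) is the only feature of the weak filiform hypothesis used.

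For the base case $j=1$, take $X,Y \in {\mathfrak g}_{\bar 0}$ and $v \in V_i$. Then
$$[[X,Y],v] \;=\; [X,[Y,v]] - [Y,[X,v]].$$
Since $[Y,v], [X,v] \in V_{i-1}$, both terms on the right lie in $[{\mathfrak g}_{\bar 0}, V_{i-1}] = V_{i-2}$; hence $[\mathcal{C}^1 {\mathfrak g}_{\bar 0}, V_i] \subset V_{i-2}$. For the inductive step, assume $[\mathcal{C}^{j-1}{\mathfrak g}_{\bar 0}, V_k] \subset V_{k-j}$ for all $k \leq m$. Given $Z \in \mathcal{C}^{j-1}{\mathfrak g}_{\bar 0}$, $Y \in {\mathfrak g}_{\bar 0}$ and $v \in V_i$, the Jacobi identity yields
$$[[Z,Y],v] \;=\; [Z,[Y,v]] - [Y,[Z,v]].$$
The first term lies in $[\mathcal{C}^{j-1}{\mathfrak g}_{\bar 0}, V_{i-1}] \subset V_{i-1-j} = V_{i-j-1}$ by the inductive hypothesis applied at level $k = i-1$. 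The second term lies in $[{\mathfrak g}_{\bar 0}, V_{i-j}] \subset V_{i-j-1}$, again by the inductive hypothesis (used for $Z$ on $v$) and the weak filiform condition. Since $\mathcal{C}^j {\mathfrak g}_{\bar 0}$ is spanned by brackets of the form $[Z,Y]$, the conclusion $[\mathcal{C}^j {\mathfrak g}_{\bar 0}, V_i] \subset V_{i-j-1}$ follows.

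I do not expect a real obstacle; the only care needed is with the degenerate indices. When $i - j - 1 \leq 0$ the target $V_{i-j-1} = \{0\}$ by the convention $V_n = \{0\}$ for $n \leq 1$, and the same convention applied to the intermediate bracket $[Y,v] \in V_{i-1}$ (or $[Z,v] \in V_{i-j}$) makes the right-hand side of the Jacobi identity automatically vanish whenever the relevant subscript drops to $1$ or below. Thus the indexing is self-consistent at the boundary and the induction closes uniformly for all $i \leq m$ and $j \geq 1$.
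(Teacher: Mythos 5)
Your proof is correct and follows essentially the same route as the paper's: an induction on $j$ in which the Jacobi identity trades one bracket in $\mathcal{C}^{j}{\mathfrak g}_{\bar 0}$ for one step down the flag, with the convention $V_n=\{0\}$ for $n\leq 1$ handling the boundary indices. The only difference is cosmetic (you step from $j-1$ to $j$ rather than from $j$ to $j+1$, and you spell out the degenerate cases slightly more carefully).
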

\begin{proof}
Let us start with the first case $j=1$. Any element of $\mathcal{C}^{1}{\mathfrak g}_{\bar 0}$ can be expressed as a linear combination of bracket products $[X,Z]$ with $X, \ Z \in {\mathfrak g}_{\bar 0}$ and by super Jacobi identity we have that $[\mathcal{C}^{1}{\mathfrak g}_{\bar 0},V_i]\subset V_{i-2}$. More precisely

$$[V_i,[X,Z]]= [[V_i,X],Z]-  [[V_i,Z],X]\subset [[V_i,{\mathfrak g}_{\bar 0}],{\mathfrak g}_{\bar 0}]=[V_{i-1},{\mathfrak g}_{\bar 0}]=V_{i-2}.$$
Now by induction, supposing the result holds for $j$, for $j+1$ we get  
$$\begin{array}{ll}[V_i,\mathcal{C}^{j+1}{\mathfrak g}_{\bar 0}]= &[V_i,[\mathcal{C}^{j}{\mathfrak g}_{\bar 0},{\mathfrak g}_{\bar 0} ]]= [[V_i,\mathcal{C}^{j}{\mathfrak g}_{\bar 0}],{\mathfrak g}_{\bar 0} ]-  [[V_i,{\mathfrak g}_{\bar 0}],\mathcal{C}^{j}{\mathfrak g}_{\bar 0}]\subset \\ \\
& \subset [V_{i-j-1},{\mathfrak g}_{\bar 0} ]-  [V_{i-1},\mathcal{C}^{j}{\mathfrak g}_{\bar 0}] \subset V_{i-j-2}.\end{array}$$
\end{proof}
\begin{thm}
If $\displaystyle ({\mathfrak g} = {\mathfrak g}_{\bar 0} \oplus {\mathfrak g}_{\bar 1}, B)$ is an $8$-dimensional complex odd-quadratic Lie superalgebra of weak filiform type, then ${\mathfrak g}$ is isomorphic to one of the following pairwise non-isomorphic Lie superalgebras: ${\mathfrak g}^0_8$, ${\mathfrak g}^1_8$ or ${\mathfrak g}^2_8$.  These Lie superalgebras  can be expressed in a suitable basis $\{X_1,X_2,X_3,X_4,e_4,e_3,u_2,v_2\}$ by the following non-null bracket products

$$\begin{array}{ll}{\mathfrak g}^{0}_8 :
\left\{ \begin{array}{ll}
    [X_1,X_2]=X_3, &  \\{}
    [X_1,X_3]=X_4, &  \\{}
    [X_1,e_3]=u_2, & \\{}
    [X_2,e_3]=v_2, & \\{}
    [X_1,e_4]=e_3, &  \\{}
    [X_3,e_4]=-v_2. &  
\end{array}\right. & {\mathfrak g}^{1}_8 :
\left\{ \begin{array}{ll}
    [X_1,X_2]=X_3, &  [e_4,e_4]=X_4,   \\{}
    [X_1,X_3]=X_4, &  \\{}
    [X_1,e_3]=u_2, & \\{}
    [X_2,e_3]=v_2, & \\{}
    [X_1,e_4]=e_3, &  \\{}
    [X_3,e_4]=-v_2.
     \end{array}\right.\end{array}$$
     
$${\mathfrak g}^{2}_8 :
\left\{ \begin{array}{ll}
    [X_1,X_2]=X_3, &  [e_4,e_4]=X_2\\{}
    [X_1,X_3]=X_4, &  [e_3,e_4]=\frac{1}{2}X_3\\{}
    [X_1,e_3]=u_2, & [e_3,e_3]=-\frac{1}{2}X_4\\{}
    [X_2,e_3]=v_2, & [u_2,e_4]=X_4\\{}
    [X_1,e_4]=e_3, &  \\{}
    [X_3,e_4]=-v_2, &  
\end{array}\right.$$
where $\{X_1,X_2,X_3,X_4\}$ are even basis vectors and $\{e_4,e_3,u_2,v_2\}$ odd ones. All the above bracket products are skew-symmetric except those involving two odd vectors which are symmetric. 
Moreover, for the three cases, ${\mathfrak g}^0_8$, ${\mathfrak g}^1_8$ and ${\mathfrak g}^2_8$,  the only possible non-null values of $B$ are determined by:
$$\begin{array}{ll}
B(X_1,v_2)=\lambda, & B(X_1,e_4)=\alpha, \\
B(X_2,u_2)=-\lambda, & B(X_2,e_4)=\beta, \\
B(X_3,e_3)=\lambda, &  \\
B(X_4,e_4)=-\lambda, &  \\
\end{array}$$
with $\lambda \neq 0$  and $\alpha, \ \beta \in \mathbb{C}$.
\end{thm}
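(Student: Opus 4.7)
The plan is to mimic the strategy used for the 6-dimensional classification, but now with one additional layer in the flag, and to exploit the generalized odd double extension structure from Theorem \ref{generalized}. First I would fix the even part: by Proposition \ref{lemgo} the Lie algebra ${\mathfrak g}_{\bar 0}$ is nilpotent, and Proposition \ref{Lema 3.3} forces $\mbox{\rm dim }{\mathfrak z}({\mathfrak g}_{\bar 0})=1$. Among the $4$-dimensional nilpotent complex Lie algebras the only one with a $1$-dimensional center is the standard filiform one, so there is no loss of generality in setting $[X_1,X_2]=X_3$, $[X_1,X_3]=X_4$. The proof of Proposition \ref{Lema 3.3} also shows that $X_4$ is central in the whole of ${\mathfrak g}$, so in particular $[X_4,{\mathfrak g}_{\bar 1}]=\{0\}$.

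Next I would determine the action of ${\mathfrak g}_{\bar 0}$ on ${\mathfrak g}_{\bar 1}$ with respect to the flag ${\mathfrak g}_{\bar 1}=V_4 \supset V_3\supset V_2\supset V_1=\{0\}$. By Lemma \ref{descending}, $[X_3,V_i]\subset V_{i-2}$ and $[X_4,V_i]\subset V_{i-3}$, so any transition $V_i \to V_{i-1}/V_{i-2}$ must be realized by $X_1$ or $X_2$. After normalizing the leading coefficient to $1$ (a permissible change of basis in $V_i/V_{i-1}$), I can assume $[X_1,e_4]=e_3$, $[X_1,e_3]=u_2$, $[X_2,e_3]=v_2$, in agreement with the $6$-dimensional normal form. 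The remaining degree of freedom is $[X_2,e_4]$ and $[X_3,e_4]\in V_2$; the super Jacobi identity for $\{X_1,X_2,e_4\}$ (combined with $[X_1,X_2]=X_3$) then ties $[X_3,e_4]$ to the $u_2$, $v_2$ components of $[X_2,e_4]$, and after absorbing the free parameters into a further basis change of $e_4$ and $X_2$ I expect to reach $[X_3,e_4]=-v_2$ with $[X_2,e_4]=0$.

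For the symmetric pairing $S^2 {\mathfrak g}_{\bar 1}\to{\mathfrak g}_{\bar 0}$, I would write $[e_4,e_4]$, $[e_3,e_4]$, $[e_3,e_3]$, $[u_2,e_4]$, $[v_2,e_4]$, $[u_2,u_2]$, $[u_2,v_2]$, $[v_2,v_2]$ as general linear combinations, and successively impose super Jacobi against the odd basis elements; as in the $6$-dimensional case, most of these brackets will collapse to zero or to multiples of $X_4$ (which lies in $[{\mathfrak g}_{\bar 0},{\mathfrak g}_{\bar 0}]$). The invariance of $B$, written out on triples that mix odd generators with $X_1$, $X_2$, $X_3$, gives the dual relations on the coefficients and pins down the scalar form of $B$ exactly as stated. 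A case split on the rank of the remaining symmetric pairing produces the three families: ${\mathfrak g}_8^0$ (trivial symmetric part), ${\mathfrak g}_8^1$ (only the generic central term $[e_4,e_4]=X_4$), and ${\mathfrak g}_8^2$ (the non-central case where a scaling reduces one free parameter to $1$ and super Jacobi consistency then forces $[e_3,e_4]=\tfrac12 X_3$, $[e_3,e_3]=-\tfrac12 X_4$, $[u_2,e_4]=X_4$).

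The main obstacle will be the bookkeeping: there are many super Jacobi identities and invariance equations, and after the calculation one still has to verify that the three resulting algebras are pairwise non-isomorphic. The latter I would handle with graded invariants: the rank and non-degeneracy of the induced symmetric bilinear map $S^2({\mathfrak g}_{\bar 1}/V_2)\to{\mathfrak g}_{\bar 0}/[{\mathfrak g}_{\bar 0},{\mathfrak g}_{\bar 0}]$, together with whether $[{\mathfrak g}_{\bar 1},{\mathfrak g}_{\bar 1}]$ is contained in $[{\mathfrak g}_{\bar 0},{\mathfrak g}_{\bar 0}]$, distinguishes ${\mathfrak g}_8^0$, ${\mathfrak g}_8^1$ and ${\mathfrak g}_8^2$. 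Alternatively, one can recognize ${\mathfrak g}_8^1$ and ${\mathfrak g}_8^2$ as generalized odd double extensions of ${\mathfrak g}_6^0$ or ${\mathfrak g}_6^1$ via Theorem \ref{generalized} with appropriate choices of $D$, $X_0$ and $\lambda_0$, which gives an independent construction confirming the list is complete.
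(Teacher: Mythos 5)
Your overall route is the same as the paper's: fix ${\mathfrak g}_{\bar 0}$ as the $4$-dimensional filiform nilpotent Lie algebra via Propositions \ref{lemgo} and \ref{Lema 3.3}, constrain the odd action with Lemma \ref{descending}, then grind through super Jacobi and $B$-invariance and split into cases. However, there is one step that fails as stated: you claim that after normalizing $[X_1,e_4]=e_3$ you can reach $[X_2,e_4]=0$ by ``absorbing the free parameters into a further basis change of $e_4$ and $X_2$.'' Writing $[X_2,e_4]=d_{24}e_3+a_{24}u_2+b_{24}v_2$, the coefficients $d_{24}$ and $b_{24}$ can indeed be absorbed (replace $X_2$ by $X_2-d_{24}X_1$ and adjust $v_2$, then shift $X_2$ by a multiple of $X_3$), but $a_{24}$ cannot: adding elements of $V_3$ to $e_4$ only perturbs the $v_2$-component (since $[X_2,e_3]=v_2$ and $V_2$ is annihilated by ${\mathfrak g}_{\bar 0}$), adding multiples of $X_1$ to $X_2$ only perturbs the $e_3$-component, and rescalings merely rescale $a_{24}$. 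In the paper $a_{24}=0$ (and $d_{24}=0$) is \emph{not} a basis-change normalization at all; it is extracted from the invariance of $B$ on the triples $\{X_2,X_3,e_4\}$ and $\{X_2,e_4,X_2\}$, after the super Jacobi analysis of the symmetric brackets has been carried out with $d_{24},a_{24},b_{24}$ still free. If you set $[X_2,e_4]=0$ before that analysis, your classification is only of a subfamily, and the completeness claim (that no further algebras exist) is not established. So you must either keep these parameters through the Jacobi computation, as the paper does, or give a separate argument that a superalgebra with $a_{24}\neq 0$ admits no odd-invariant scalar product.

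Two smaller remarks. First, the invariants you propose for pairwise non-isomorphy do not separate ${\mathfrak g}^0_8$ from ${\mathfrak g}^1_8$: for both, $[{\mathfrak g}_{\bar 1},{\mathfrak g}_{\bar 1}]\subset[{\mathfrak g}_{\bar 0},{\mathfrak g}_{\bar 0}]$ and the induced map $S^2({\mathfrak g}_{\bar 1}/V_2)\to{\mathfrak g}_{\bar 0}/[{\mathfrak g}_{\bar 0},{\mathfrak g}_{\bar 0}]$ vanishes (in ${\mathfrak g}^1_8$ the image $X_4$ already lies in the derived algebra). The paper instead uses that ${\mathfrak g}^0_8$ has trivial symmetric pairing (it is a $\mathbb{Z}_2$-graded Lie algebra) while ${\mathfrak g}^1_8$ does not, and distinguishes ${\mathfrak g}^2_8$ by $\dim{\mathfrak z}({\mathfrak g})=2$ versus $3$; the fix is trivial but needed. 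Second, your final case split on the symmetric part matches the paper's split on $(a,b)$ with $a=a^2_{44}$, $b=a^4_{44}$ only after $B$-invariance has also forced $a^4_{33}=-\tfrac12 a$; the coefficient of $[e_3,e_3]$ is not free to be discarded by Jacobi alone, so the ordering ``Jacobi first, then $B$, then case split'' must be respected there as well.
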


\begin{proof}
Firstly, let us note that the three Lie superalgebras of the statement are clearly pairwise non-isomorphic. In particular, we have that ${\rm dim}({\mathfrak z} ({\mathfrak g}^2_8))=2$ whereas ${\rm dim}({\mathfrak z} ({\mathfrak g}^{i}_8))=3$, $i \in \{ 0,1\}$. Let us remark that ${\mathfrak g}^0_8$ and ${\mathfrak g}^1_8$ are clearly non-isomorphic. In fact, the first one is a  Lie algebra (more concretely a $\mathbb{Z}_2$-graded Lie algebra) and the second one is a Lie superalgebra which is not a Lie algebra because it contains non-null symmetric bracket products.

 Now, suppose $\displaystyle ({\mathfrak g} = {\mathfrak g}_{\bar 0} \oplus {\mathfrak g}_{\bar 1}, B)$  a complex odd-quadratic Lie superalgebra with ${\rm dim}({\mathfrak g}_{\bar 0})= {\rm dim} ({\mathfrak g}_{\bar 1})=4$, such that ${\mathfrak g}_{\bar 1}$ is a weak filiform ${\mathfrak g}_{\bar 0}$-module with respect to the flag $${\mathfrak g}_{\bar 1} = V_4 \supset V_3 \supset  V_2 \supset V_1=\{0\}.$$
 
We set  ${V}_4/ {V}_{3} := \mathbb{C}e_4$, ${V}_3/ {V}_{2} := \mathbb{C}e_3$ and   ${V}_2 =span_{ \mathbb{C}} \{u_2,v_2 \}$. On the other hand, from Propositions \ref{lemgo} and \ref{Lema 3.3} we have that ${\mathfrak g}_{\bar 0}$ is nilpotent  and  $\mbox{\rm dim }{\mathfrak z}({\mathfrak g}_{\bar 0})=1$. There is only one $4$-dimensional Lie algebra, up to isomorphism, verifying these two conditions (see for instance \cite{Graaf}): $[X_1,X_2]=X_3$, $[X_1,X_3]=X_4$ so we take this as  ${\mathfrak g}_{\bar 0}$. Note that $X_4$ is  a central element for the Lie superalgebra ${\mathfrak g} = {\mathfrak g}_{\bar 0} \oplus {\mathfrak g}_{\bar 1}$ (see Proposition \ref{Lema 3.3}). Thus, in particular,  $[X_4,{\mathfrak g}_{\bar 1}]=\{ 0\}$. 

Analogous to the $6$-dimensional case, from ${\mathfrak g}_{\bar 1}$ being a weak filiform ${\mathfrak g}_{\bar 0}$-module we have that $[{\mathfrak g}_{\bar 0}, V_3]=V_2$ and $[{\mathfrak g}_{\bar 0}, V_2]=\{ 0\}$. Then, there is no loss of generality in supposing $ [X_1,e_3]=u_2$ and
    $[X_2,e_3]=v_2.$ Now, since $X_3, \ X_4 \in \mathcal{C}^{1}{\mathfrak g}_{\bar 0}$ and from Lemma \ref{descending} we get $[X_3,e_3]=[X_4,e_3]=0$ . Also, Lemma \ref{descending} allows to set  
    
    $$[X_1,e_4]=c_{14}e_3+a_{14}u_2+b_{14}v_2$$
  $$[X_2,e_4]=d_{24}e_3+a_{24}u_2+b_{24}v_2$$
 with $c_{14}$ or $d_{24} \neq 0$. The super Jacobi identity for the triple $\{ e_4,X_1,X_2\}$ leads to $[X_3,e_4]=d_{24}u_2-c_{14}v_2$. Thus, the skew-symmetric bracket products of the Lie superalgebra are exactly:
 
 $$\begin{array}{ll}
    [X_1,X_2]=X_3, &  [X_1,e_4]=c_{14}e_3+a_{14}u_2+b_{14}v_2\\{}
    [X_1,X_3]=X_4, &  [X_2,e_4]=d_{24}e_3+a_{24}u_2+b_{24}v_2\\{}
    [X_1,e_3]=u_2, & [X_3,e_4]=d_{24}u_2-c_{14}v_2,\\{}
    [X_2,e_3]=v_2. & 
    \end{array}$$
Next, by means of a sequence of three isomorphisms we show that always can be supposed $c_{14}=1$ and $a_{14}=b_{14}=0$. First, we see that $c_{14}$ can be supposed $c_{14} \neq 0$. In fact, if $c_{14}=0$ and $d_{24}\neq 0$ through the isomorphism where all the basis vectors remain invariant except for $X_1$ and $u_2$ whose new values $X'_1$,  $u'_2$ are
$$  X'_1=X_1+X_2, \ u'_2=u_2+v_2$$
we obtain the new structure constant $c'_{14}=d_{24} \neq 0$. Secondly, by the change of scale (isomorphism) where all the basis vectors remain invariant except for 
$ \{ e'_4=\frac{1}{c_{14}}e_4\}$, we get $c'_{14}=1$. Finally, by renaming (isomorphism) $e'_3:=e_3+a_{14}u_2+b_{14}v_2$ we obtain 
$$\begin{array}{ll}
    [X_1,X_2]=X_3, &  [X_1,e_4]=e_3\\{}
    [X_1,X_3]=X_4, &  [X_2,e_4]=d_{24}e_3+a_{24}u_2+b_{24}v_2\\{}
    [X_1,e_3]=u_2, & [X_3,e_4]=d_{24}u_2-v_2,\\{}
    [X_2,e_3]=v_2. & 
    \end{array}$$

Only rest to determine the symmetric  bracket products  belonging to $[{\mathfrak g}_{\bar 1}, {\mathfrak g}_{\bar 1}]$ for having totally described the multiplication table of the Lie superalgebra. We start by setting $$[e_4,e_4]:=a_{44}^1X_1+a_{44}^2X_2+a_{44}^3X_3+a_{44}^4X_4$$ 
$$[e_3,e_3]:=a_{33}^1X_1+a_{33}^2X_2+a_{33}^3X_3+a_{33}^4X_4$$

After,  by applying the super Jacobi identity 
for the  triple $\{x,y,z\}$  we get the  following constrains given
 in the table:
  \begin{center}
 	\begin{tabular}{l|l}
 		
 		Super Jacobi Identity & \quad Constrain\\
 		\hline \hline 
 			$\{X_1,e_4,e_4\}$ & \quad $[e_3,e_4]=\frac{1}{2}(a_{44}^2X_3+a_{44}^3X_4)$ \\
 			$\{e_3,e_3,e_3\}$ & \quad $a_{33}^1=a_{33}^2=0$ \\
 			$\{X_1,e_3,e_4\}$ & \quad $[u_2,e_4]=-a_{33}^3X_3+(\frac{1}{2}a_{44}^2-a_{33}^4)X_4$ \\
 			$\{X_1,u_2,e_4\}$ & \quad $[u_2,e_3]=-a_{33}^3X_4$ \\
 			$\{X_1,e_3,e_3\}$ & \quad $a_{33}=0$ \\
 			$\{X_2,e_3,e_3\}$ & \quad $[v_2,e_3]=0$ \\
 			$\{X_1,u_2,e_3\}$ & \quad $[u_2,u_2]=0$ \\
 			$\{X_2,u_2,e_3\}$ & \quad $[v_2,u_2]=0$ \\
 			$\{X_2,v_2,e_3\}$ & \quad $[v_2,v_2]=0$ \\
 			$\{X_2,e_3,e_4\}$ & \quad $[v_2,e_4]=-d_{24}a_{33}^4X_4$ \\
 			$\{X_2,e_4,e_4\}$ & \quad $a_{44}^1=-d_{24}a_{44}^2$, \\
 			& \quad $\frac{1}{2}d_{24}a_{44}^3+\frac{1}{2}a_{24}a_{44}^2-a_{24}a_{33}^4-b_{24}d_{24}a_{33}^4=0$ \quad (1)\\
 			$\{e_4,e_4,e_4\}$ & \quad $a_{44}^3=b_{24}a_{44}^2$, \\
 			 & \quad  $d_{24}a_{44}^3+a_{24}a_{44}^2=0$ \quad (2)\\
 			
 			\end{tabular}
 \end{center}
From equation (2), (1) remains $a_{33}^4(a_{24}+b_{24}d_{24})=0$ and (2) can be rewritten as $a_{44}^2(a_{24}+b_{24}d_{24})=0$. After renaming $a_{44}^2:=a$, $a_{44}^4:=b$ and $a_{33}^4:=c$ we obtain as bracket products for the Lie superalgebra:

$$\begin{array}{ll}
    [X_1,X_2]=X_3, &  [X_1,e_4]=e_3\\{}
    [X_1,X_3]=X_4, &  [X_2,e_4]=d_{24}e_3+a_{24}u_2+b_{24}v_2\\{}
    [X_1,e_3]=u_2, & [X_3,e_4]=d_{24}u_2-v_2\\{}
    [X_2,e_3]=v_2,& [e_4,e_4]=-d_{24}aX_1+aX_2+ab_{24}X_3+bX_4 \\{} 
    [e_3,e_4]=\frac{1}{2}(aX_3+ab_{24}X_4),&[e_3,e_3]=cX_4\\{}
[u_2,e_4]=(\frac{1}{2}a-c)X_4,& [v_2,e_4]=-d_{24}cX_4
    \end{array}$$
with $a(a_{24}+b_{24}d_{24})=0$ and $c(a_{24}+b_{24}d_{24})=0$.

Now, we impose that this Lie superalgebra admits a bilinear form $B$ odd, supersymmetrical, invariant and non-degenerate. All the values of $B$ will came totally determined by 
$$\begin{array}{llll}
B(X_1,e_4),& B(X_1,e_3), & B(X_1,u_2),& B(X_1,v_2), \\
B(X_2,e_4),& B(X_2,e_3), & B(X_2,u_2),& B(X_2,v_2), \\
B(X_3,e_4),&B(X_3,e_3), & B(X_3,u_2),& B(X_3,v_2), \\
B(X_4,e_4),&B(X_4,e_3), & B(X_4,u_2),& B(X_4,v_2). \\
\end{array}$$

By applying the invariant condition $B([x,y],z)=B(x,[y,z])$
for the  ordered triple $\{x,y,z\}$  we get the  following relationships given
 in the table:
 \begin{center}
 	\begin{tabular}{l|l}
 		
 		Invariant Condition & \quad Relationship\\
 		\hline \hline
 		$\{X_1,X_1,e_4\}$ & \quad $B(X_1,e_3)=0$ \\
 			$\{X_1,X_1,e_3\}$ & \quad $B(X_1,u_2)=0$ \\
 			$\{X_2,X_2,e_3\}$ & \quad $B(X_2,v_2)=0$ \\
 			$\{X_4,X_1,e_4\}$ & \quad $B(X_4,e_3)=0$ \\
 			$\{X_3,X_2,e_3\}$ & \quad $B(X_3,v_2)=0$ \\
 			$\{X_4,X_2,e_3\}$ & \quad $B(X_4,v_2)=0$ \\
 			$\{X_1,X_2,u_2\}$ & \quad $B(X_3,u_2)=0$ \\
 			$\{X_1,X_3,u_2\}$ & \quad $B(X_4,u_2)=0$ \\
 			$\{X_1,X_2,e_3\}$ & \quad $B(X_3,e_3)=B(X_1,v_2)$ \\
 			$\{X_2,X_1,e_3\}$ & \quad $B(X_3,e_3)=-B(X_2,u_2)$ \\
 			$\{X_3,X_1,e_4\}$ & \quad $B(X_3,e_3)=-B(X_4,e_4)$ \\
 			$\{X_2,X_1,e_4\}$ & \quad $B(X_2,e_3)=-B(X_3,e_4)$ \\
 			$\{X_1,X_2,e_4\}$ & \quad $B(X_3,e_4)=b_{24}B(X_1,v_2)$ \\
 			\end{tabular}
 \end{center}
 Since $B$ is non-degenerate we get $B(X_1,v_2)=-B(X_2,u_2)\neq 0$ and from the invariant condition for the triples  $\{ X_2,X_3,e_4\}$ and $\{ X_2,e_4,X_2\}$ we get $d_{24}=0$ and $a_{24}=0$ respectively. Finally by the invariant condition on the triple  $\{ e_4,e_4,u_2\}$  we get $c=-\frac{1}{2}a$. Remaining then,   
 $$\begin{array}{ll}
    [X_1,X_2]=X_3, &  [X_1,e_4]=e_3\\{}
    [X_1,X_3]=X_4, &  [X_2,e_4]=b_{24}v_2\\{}
    [X_1,e_3]=u_2, & [X_3,e_4]=-v_2\\{}
    [X_2,e_3]=v_2,& [e_4,e_4]=aX_2+ab_{24}X_3+bX_4 \\{} 
    [e_3,e_4]=\frac{1}{2}(aX_3+ab_{24}X_4),&[e_3,e_3]=-\frac{1}{2}aX_4\\{}
[u_2,e_4]=aX_4,& 
    \end{array}$$
 $$\begin{array}{ll}
B(X_1,v_2)=\lambda, & B(X_1,e_4)=\alpha, \\
B(X_2,u_2)=-\lambda, & B(X_2,e_4)=\beta, \\
B(X_3,e_3)=\lambda, &  B(X_2,e_3)=-b_{24}\lambda,\\
B(X_4,e_4)=-\lambda, &  B(X_3,e_4)=b_{24}\lambda,\\
\end{array}$$
with $\lambda \neq 0$. After applying the isomorphism where all the basis vectors remain invariant except for 
$ \{ X'_2=X_2+b_{24}X_3, \ X'_3=X_3+b_{24}X_4\}$ we get $b_{24}=0$ and therefore $B(X_2,e_3)=B(X_3,e_4)=0$. 

Now, we distinguish two cases depending on $a\neq 0$ or $a=0$. If $a\neq 0$ the isomorphism (change of scale) defined by $$X'_i=X_i, \ 1 \leq i \leq 3, \ 
e'_4=\frac{1}{\sqrt{a}}e_4,
e'_3=\frac{1}{\sqrt{a}}e_3, \ u'_2=\frac{1}{\sqrt{a}}u_2, \ v'_2=\frac{1}{\sqrt{a}}v_2$$
allows to assert that $a$ can be supposed $a=1$ and renaming $X'_2:=X_2+bX_4$ (isomorphism) we get $b=0$ and then ${\mathfrak g}^2_8$. On the contrary, if $a=0$, we obtain  ${\mathfrak g}^0_8$ for $b=0$, and for $b\neq 0$ after the change of scale $\{X'_i=X_i, \ 1 \leq i \leq 3, \ 
e'_4=\frac{1}{\sqrt{b}}e_4,
e'_3=\frac{1}{\sqrt{b}}e_3, \ u'_2=\frac{1}{\sqrt{b}}u_2, \ v'_2=\frac{1}{\sqrt{b}}v_2\}$ we get $b=1$ and then ${\mathfrak g}^1_8$, which concludes the proof of the theorem. 
\end{proof}

\begin{cor}
The three non-isomorphic complex $8$-dimensional Lie superalgebras can be obtained by a double extension of the $6$-dimensional ${\mathfrak g}^0_6$. In fact, a straightforward computation leads to the expression for all the odd skew-supersymmetric derivations $D$ of ${\mathfrak g}^0_6$.  After replacing $\lambda$ by $1$ it can be obtained: 
$$\begin{array}{ll}
D(X_1)=a e_3+(a \beta +b \alpha +c)u_2-a\alpha v_2, & D(e_3)=fX_3\\
D(X_2)=b e_3+b \beta u_2+c v_2, & D(u_2)=D(v_2)=0\\
D(X_3)=bu_2-av_2&
\end{array}$$
      
Thus, using the notation of Proposition \ref{generalized} we have:
      
(I). ${\mathfrak g}^0_8$ comes from ${\mathfrak g}^0_6$ with: \begin{itemize}
\item[$-$] $D(X_1)=-e_3, \ D(X_3)=v_2$. This derivation derives from the general expression for $a=-1$ and null the others parameters. 
\item[$-$] $\varphi(X_1,X_3)=1$.
\item[$-$]$[e_4,e_4]=0$.
\end{itemize}

(II). ${\mathfrak g}^1_8$ comes from ${\mathfrak g}^0_6$ with: \begin{itemize}
\item[$-$] $D(X_1)=-e_3, \ D(X_3)=v_2$. 
\item[$-$] $\varphi(X_1,X_3)=1$.
\item[$-$]$[e_4,e_4]=X_4$.
\end{itemize}

(III). ${\mathfrak g}^2_8$ comes from ${\mathfrak g}^0_6$ with: \begin{itemize}
\item[$-$] $D(X_1)=-e_3, \ D(X_3)=v_2, \ D(e_3)=\frac{1}{2}X_3$. This derivation derives from the general expression for $a=-1$, $f=\frac{1}{2}$ and null the others parameters. 
\item[$-$] $\varphi(X_1,X_3)=1$, $\varphi(e_3,e_3)=-\frac{1}{2}$.
\item[$-$]  $\psi(u_2)=1$.
\item[$-$]$[e_4,e_4]=X_2$.
\end{itemize}
\end{cor}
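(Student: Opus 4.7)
The plan is to prove the corollary by direct application of Theorem~\ref{TEOREMA__3.2}, preceded by two auxiliary computations. First, I would exhaustively describe all odd skew-supersymmetric superderivations $D$ of $(\mathfrak{g}^0_6,B)$. Oddness forces
\begin{align*}
D(X_i) &\in \mathrm{span}\{e_3,u_2,v_2\}\ \text{for}\ i\in\{1,2,3\},\\
D(e_3),\ D(u_2),\ D(v_2) &\in \mathrm{span}\{X_1,X_2,X_3\},
\end{align*}
so $D$ depends a priori on eighteen scalars. Imposing the superderivation identity on each of the non-trivial brackets of $\mathfrak{g}^0_6$ (namely $[X_1,X_2]=X_3$, $[X_1,e_3]=u_2$, $[X_2,e_3]=v_2$, and the vanishing brackets involving $u_2, v_2$) kills most of these; in particular it forces $D(u_2)=D(v_2)=0$ and makes $D(e_3)$ proportional to $X_3$. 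Imposing then skew-supersymmetry of $D$ with respect to $B$ (after normalizing $\lambda=1$) consumes the remaining equations, leaving precisely the six-parameter family $(a,b,c,f,\alpha,\beta)$ displayed in the statement.

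Second, for each $k\in\{0,1,2\}$ I would decompose $\mathfrak{g}^k_8=\mathbb{K}e_4\oplus\mathfrak{g}^0_6\oplus\mathbb{K}X_4$, where $X_4$ is even central and satisfies $B(X_4,e_4)=-\lambda\neq 0$, so (after rescaling) $X_4$ plays the role of $e^*$ and $e_4$ the role of $e$ in Theorem~\ref{TEOREMA__3.2}. Reading off the multiplication table of $\mathfrak{g}^k_8$ then yields the data required by the construction: $D(X):=[e_4,X]$ projected onto $\mathfrak{g}^0_6$ along $X_4$, the $X_4$-component of each $[X,Y]_{\mathfrak{g}^k_8}$ as $\varphi(X,Y)$, the $X_4$-component of each $[e_4,X]$ as $\psi(X)$, and $X_0+\lambda_0 X_4:=[e_4,e_4]$. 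Comparing with the general formula from the first step gives the assignments of the corollary: $a=-1$ (all other derivation parameters zero) for $\mathfrak{g}^0_8$ and $\mathfrak{g}^1_8$, distinguished only by $[e_4,e_4]$; and additionally $f=\tfrac12$ for $\mathfrak{g}^2_8$, with $[e_4,e_4]=X_2$ and $\psi(u_2)=1$ producing the extra bracket $[u_2,e_4]=X_4$.

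Finally, the hypotheses of Theorem~\ref{TEOREMA__3.2} ($D(X_0)=0$, $D^2=\tfrac12[X_0,\cdot]_{\mathfrak{g}^0_6}$, and $e_3\in D((\mathfrak{g}^0_6)_{\bar 0})$) need to be checked in each of the three cases; with the explicit form of $D$ in hand, each verification is immediate (the membership $e_3\in D((\mathfrak{g}^0_6)_{\bar 0})$ follows from $D(X_1)=-e_3$ throughout). The main obstacle will be the first step: the exhaustive derivation calculation is tedious due to the number of free parameters and the interplay between the derivation identity and skew-supersymmetry, but is entirely mechanical. Once it is complete, matching each $\mathfrak{g}^k_8$ to the corresponding data is straightforward and the corollary follows from a direct invocation of Theorem~\ref{TEOREMA__3.2}.
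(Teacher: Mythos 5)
Your proposal is correct and follows essentially the same route the paper intends: compute the full family of odd skew-supersymmetric superderivations of $(\mathfrak{g}^0_6,B)$, then read off $D$, $\varphi$, $\psi$ and $[e_4,e_4]$ from the decomposition $\mathfrak{g}^k_8=\mathbb{K}e_4\oplus\mathfrak{g}^0_6\oplus\mathbb{K}X_4$ with $X_4$ in the role of $e^*$, and verify the hypotheses of the double-extension theorem (the paper itself gives no more than this, asserting the derivation computation as "straightforward"). The only detail worth adding is that when $\alpha,\beta\neq 0$ the elements $X_1,X_2$ are not orthogonal to $e_4$, so to realize $\mathfrak{g}^0_6$ as $A^\perp$ one should first replace $X_i$ by $X_i+\lambda^{-1}B(X_i,e_4)X_4$ (harmless, since $X_4$ is central), or simply take $\alpha=\beta=0$.
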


A possible future line of research is to study the class of quadratic Lie superalgebras $(\mathfrak{g}=\mathfrak{g}_{\bar 0}\oplus \mathfrak{g}_{\bar 1},[\cdot,\cdot],B)$ such that $\mathfrak{g}_{\bar 1}$ is a weak filiform $\mathfrak{g}_{\bar 0}$-module. Additionally, we have in mind   another possible  modification of  the definition of filiform and study the set of quadratic (and odd-quadratic) Lie superalgebra $(\mathfrak{g}=\mathfrak{g}_{\bar 0}\oplus \mathfrak{g}_{\bar 1},[\cdot,\cdot],B)$ such that the $\mathfrak{g}_{\bar 0}$-module $\mathfrak{g}_{\bar 1}$ is a filiform module of this new type.

\bibliographystyle{amsplain}

\begin{thebibliography}{99}


\bibitem{OddQuadraticLieSuperalgebras} Albuquerque, H.; Barreiro, E.; Benayadi, S.; Odd-quadratic Lie superalgebras. {\em J. Geom. Phys.} {\bf 60} (2010), 230--250.

\bibitem{ABG} Ancoch\'ea-Bermud\'ez J.M.; Goze M.; Classification des alg\`ebres des Lie filiformes de dimension
$8$. {\em Arch. Math.} {\bf 50} (1988), 511--525.


\bibitem{quadraticFLSA} Barreiro, E.; Benayadi, S.; Navarro, R.M.; S\'anchez, J.M.; On solvable Lie superalgebras with a filiform module as an odd part via double extensions of quadratic ones. Submitted.








\bibitem{Graaf} De Graaf, W.A.; Classification of $6$- dimensional nilpotent Lie algebras over fields of characteristic not $2$. {\em J. Algebra} {\bf 309} (2007), 640--653.


\bibitem{10} Favre, G.; Santharoubane, L.; Symmetric, invariant, non-degenerate bilinear form on Lie algebra. {\em J. Algebra} {\bf 105} (1987), 451--464.


\bibitem{Fia} Fialowski A.; Wagemann F., Cohomology and deformations of the infinite-dimensional filiform Lie algebra ${\mathfrak m}_2$. {\em J. Algebra} {\bf 319} (2008), no. 12, 5125--5143.

\bibitem{11} Figueroa-O’Farrill, J.M.; Stanciu, S.; On the structure of symmetric self-dual Lie algebras. {\em J. Math. Phys.} {\bf 37} (1996), no. 8, 4121--4134.

\bibitem{GJM} G\'omez J.R.; Jimen\'ez-Merch\'an, A.; Khakimdjanov Y.; Symplectic structures on the filiform Lie algebras. {\em J. Pure Appl. Algebra} {\bf 156} (2001), 15--31.

\bibitem{JGP} G\'omez, J.R.; Khakimdjanov, Yu.; Navarro, R.M.; Some problems concerning to nilpotent Lie superalgebras. {\em J. Geom. Phys.} {\bf 51} (2004), 473--486.


\bibitem{Goze} Goze, M.; Khakimdjanov Y.; Nilpotent Lie algebras. Kluwer Academic, Dordrecht, 1996.







\bibitem{Kacc} 
Kac, Victor G.; Infinite-dimensional Lie algebras. An introduction. {\it Progress in Math.} {\bf 44}, Birkhäuser Boston, Inc., Boston, MA,  1983. ISBN: 0-8176-3118-6.



\bibitem{15} Medina, A.; Revoy, P.; Alg\`ebres de Lie et produit scalaire invariant. {\em Ann. Sci. \`Ecole. Norm. Sup.} {\bf 18} (1985), no. 4, 553--561.

\bibitem{Mill} Millionschikov, D.V.; Graded filiform Lie algebras and symplectic nilmanifolds. Geometry, topology, and mathematical physics, 259--279, Amer. Math. Soc. Transl. Ser. 2, 212, Adv. Math. Sci., {\bf 55}, Amer. Math. Soc., Providence, RI, 2004.

\bibitem{9b} Mohammedi, N.; Wess-Zumino-Novikov-Witten models based on Lie superalgebras. {\em Phys. Lett. B} {\bf 331} (1994), 93--98.

\bibitem{solvableLieSuperalgebra} Rodr\'iguez-Vallarte, M.C.; Salgado, G.; S\'anchez-Valenzuela, O.A.; On indecomposable solvable Lie superalgebras having a Heisenberg nilradical. {\em  J. Algebra Appl.} {\bf 15} (2016), no. 10, 1650190, 26 pp.

\bibitem{SV2003} Salgado, G.; S\'anchez-Valenzuela, O.A.; Lie superalgebras based on $gl_n$ associated to the adjoint representation and invariant geometric structures defined on them, {\em Comm. Math. Phys.} {\bf 241}  (2003), 505--518.

\bibitem{16} Scheunert, M.; The Theory of Lie Superalgebras. {\em Lect. Notes Math.} {\bf 716}, Springer-Verlag, Berlin, Heidelberg, 1979.

\bibitem{12*} Vergne, M.; Cohomologie des alg\`ebres de Lie nilpotentes. Application \`a l`\'etude de la vari\'et\'e des alg\`ebres de Lie nilpotentes. {\em Bull. Soc. Math. France} {\bf 98} (1970), 81--116.
\end{thebibliography}

\end{document}